\definecolor{aleacolor}{rgb}{0.16,0.59,0.78}
\newcommand*\colvec[1]{
	\global\colveccount#1
	\begin{pmatrix}
		\colvecnext
	}
	\def\colvecnext#1{
		#1
		\global\advance\colveccount-1
		\ifnum\colveccount>0
		\\
		\expandafter\colvecnext
		\else
	\end{pmatrix}
	\fi
}
\newcommand{\ndN}{\mathbb{N}}
\newcommand{\ndZ}{\mathbb{Z}}
\newcommand{\ndR}{\mathbb{R}}
\renewcommand{\Pr}[1]{\mathbb{P}(#1)}
\newcommand{\Prb}[1]{\mathbb{P}\left(#1\right)}
\newcommand{\Exb}[1]{\mathbb{E}\left[#1\right]}
\newcommand{\one}{{\mathbbm{1}}}
\newcommand{\convdis}{\,{\buildrel d \over \longrightarrow}\,}
\newcommand{\convd}{\,{\buildrel d \over \longrightarrow}\,}
\newcommand{\convp}{\,{\buildrel p \over \longrightarrow}\,}
\newcommand{\eqdist}{\,{\buildrel d \over =}\,}
\newcommand{\Seq}{\textsc{SEQ}}
\newcommand{\n}{\ensuremath{n_1,n_2,n_3}}
\newcommand{\myvec}[1]{\ensuremath{\left(\begin{smallmatrix}#1\end{smallmatrix}\right)}}
\DeclareMathOperator{\rt}{rt}
\newcommand{\bin}{\ensuremath{\mathrm{Binomial}}}
\newcommand{\fallfak}[2]{\ensuremath{#1^{\underline{#2}}}}
\newcommand{\auffak}[2]{\ensuremath{#1^{\overline{#2}}}}
\newcommand{\Stir}[2]{\genfrac{ \{ }{ \} }{0pt}{}{#1}{#2}}
\newtheorem{theorem}{Theorem}[section]
\newtheorem{corollary}[theorem]{Corollary}
\newtheorem{proposition}[theorem]{Proposition}
\newtheorem{lemma}[theorem]{Lemma}
\newtheorem{remark}[theorem]{Remark}
\newtheorem{example}[theorem]{Example}
\numberwithin{equation}{section}
\keywords{Gibbs partitions; Composition schemes; Lattice paths}
\title{\textbf{Gibbs partitions and lattice paths}}
\date{\today}
\author{Niccolò Bosio}
\address[Niccolò Bosio]{Vienna University of Technology}
\email{niccolo.bosio@tuwien.ac.at}
\author{Markus Kuba}
\address[Markus Kuba]{University of Applied Sciences - Technikum Wien}
\email{kuba@technikum-wien.at}
\author{Benedikt Stufler}
\address[Benedikt Stufler]{Vienna University of Technology}
\email{benedikt.stufler at tuwien.ac.at}
\pgfplotsset{compat=1.18}
\begin{document}

\vspace {-0.5cm}

\begin{abstract}
This work is devoted to the analysis of a Gibbs partition model, also known as a composition scheme. We consider a natural new condition on the component weights. It leads to a new behavior for the total number of components. We discover a condensation phenomenon, producing a unique giant component comprising almost the entire mass. Additionally, we prove a point process limit describing the asymptotic size of the  non-maximal  components exhibiting a sublinear power-law growth. A particular motivation for our article stems from  applications, ranging from simple random walks in the cube, over lattice paths models in the plane, pairs of directed random walks, over to urn models and card guessing games.
\end{abstract}

%\tableofcontents

\maketitle
\section{Introduction and main results}
A great many combinatorial structures consist of more basic building
blocks or components. This situation is omnipresent in various fields, such as combinatorics, probability theory, and statistical mechanics. It manifests itself in a vast variety of topics, amongst others, permutations, random walks, trees, graphs, mappings, parking functions and urn models. 

An important direction of research is the development of a general theory for studying the asymptotic component distribution, which can be applied to all the special cases. Successful lines of research are partition models satisfying a conditioning relation~\cite{MR2032426,MR2121024} and the Gibbs partition model~\cite{MR2245368}, which has witnessed many developments in recent years~\cite{MR2453776, Stufler2018,MR4132643,Stufler2020,Stufler2022}. 
We note that in the combinatorial literature~\cite{FlaSed,BFSS2001, BaKuWa,FlaSed} the term composition scheme is often used instead. Roughly speaking, the Gibbs partition is based on two weight sequences that bias the size and number of components.

In this work we define and study a novel regime for the Gibbs partition model, assuming that the underlying weight sequences satisfy certain growth assumptions, see~\eqref{eq:condw}
and~\eqref{eq:condv}. We uncover several new phenomena and limit laws governing the number and sizes of components. 

We present concrete applications to combinatorial models.
In particular, we study a random walk on the cube with unit steps $(-1,0,0)$, $(0,-1,0)$, $(0,0,-1)$, as illustrated in Figure~\ref{fi:cubewalk}. The starting point is the upper corner $(n,n,n)$ and end point is the origin $(0,0,0)$. We give a complete description of the number of returns to the space diagonal, as well as many other statistics.

The study of such simple random walks is a fundamental topic in combinatorics and probability theory. Amongst the best known results is the classical transition between recurrence and transience for the drunkard walk established by P\'olya~\cite{Polya1921}: in dimensions one and two the simple random walk is recurrent, but for dimension $d\ge 3$ the behavior changes and the random walk is transience; this is vividly summarized by Kakutani: "A drunk man will find his way home, but a drunk bird may get lost forever". Additionally, we refer the reader to Fill et al.~\cite{Fill2005} for an analysis of P\'olya's drunkard walk using analytic combinatorics, see also~\cite[Example VI.14]{FlaSed}.

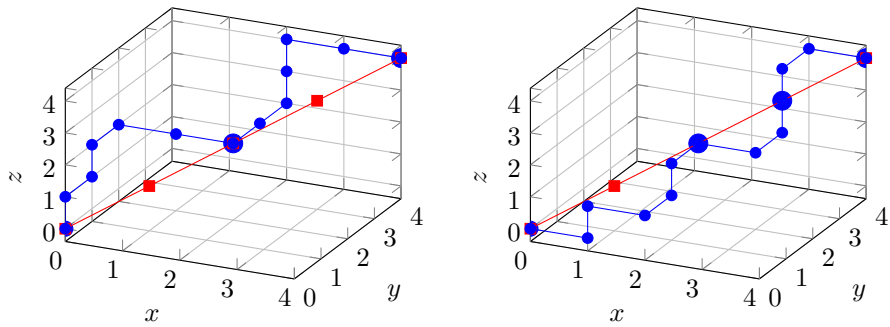
\begin{figure}[!htb]
	\resizebox{6cm}{6cm}
	{
		\begin{tikzpicture}
			\begin{axis}[
				grid=major, xlabel=$x$,ylabel=$y$,zlabel=$z$,
				xtick={0,...,4},  ytick={0,...,4}, ztick={0,...,4},
				] 
				\node [draw,circle,fill, blue,inner sep=2pt] at (axis cs: 0 , 0,0){}; 
				\node [draw,circle,fill, blue,inner sep=2.5pt] at (axis cs: 4 , 4,4){}; 
				\node [draw,circle,fill, blue,inner sep=2.5pt] at (axis cs: 2 , 2,2){}; 
				\addplot3[mark=square*, red] coordinates {(0,0,0) (1,1,1) (2,2,2)(3,3,3)(4,4,4)};
				\addplot3[mark=*, blue] coordinates {(4,4,4)(3,4,4)(2,4,4)(2,4,3)(2,4,2)(2,3,2)(2,2,2)(1,2,2)(0,2,2)(0,1,2)(0,1,1)(0,0,1)(0,0,0)};
			\end{axis}
		\end{tikzpicture} 
	}
	\quad
	\resizebox{6cm}{6cm}
	{
		\begin{tikzpicture}
			\begin{axis}[
				grid=major, xlabel=$x$,ylabel=$y$,zlabel=$z$,
				xtick={0,...,4},  ytick={0,...,4}, ztick={0,...,4},
				] 
				\node [draw,circle,fill, blue,inner sep=2pt] at (axis cs: 0 , 0,0){}; 
				\node [draw,circle,fill, blue,inner sep=2.5pt] at (axis cs: 4 , 4,4){}; 
				\node [draw,circle,fill, blue,inner sep=2.5pt] at (axis cs: 3 , 3,3){}; 
				\node [draw,circle,fill, blue,inner sep=2.5pt] at (axis cs: 2 , 2,2){}; 
				\addplot3[mark=square*, red] coordinates {(0,0,0) (1,1,1) (4,4,4)};
				\addplot3[mark=*, blue] coordinates {(4,4,4)(3,4,4)(3,3,4)(3,3,3)(3,3,2)(3,2,2)(2,2,2)(2,1,2)(2,1,1)(2,0,1)(1,0,1)(1,0,0)(0,0,0)};
			\end{axis}
		\end{tikzpicture} 
	}
	\caption{Two sample paths of length twelve from $(4,4,4)$ to $(0,0,0)$: the left path has two returns, the right paths has three returns to the diagonal.}
	\label{fi:cubewalk}
\end{figure}

 This generalizes the classical Dyck paths to dimension three, and we show a transition from $\sqrt{n}$ for dimension two to $\log(n)$ for the order of the returns to the diagonal. We also introduce and analyze Delannoy paths in the cube, generalizing a another classical model~\cite{Banderier_2005} of directed lattice paths to higher dimensions. Moreover, we discuss colored lattice paths in the cube, as well as pairs of directed lattice paths in two dimensions. Our results are also applied to certain lattice walks in the plane. Finally, applications of our main results to the sampling without replacement urn and to a card guessing game are stated.

\subsection{The Gibbs partition model}

Let $(w_n)_{n\ge 1}$ and $(v_k)_{k\ge 0}$ be two sequences of nonnegative real numbers, referred to as
\emph{component weights} and \emph{component-count weights}, respectively.
We denote their ordinary generating functions by
\[
W(z) = \sum_{n\ge 0} w_n z^n, 
\qquad
V(z) = \sum_{k\ge 1} v_k z^k.
\]
The associated \emph{composition scheme} is given by the formal composition
\[
U(z)=V\big(W(z)\big) = \sum_{n\ge 0} u_n z^n, 
\qquad
u_n = \sum_{k\ge 1} v_k w_n^{(k)}, \qquad w_n^{(k)} = \sum_{\substack{n_1+\dots+n_k = n \\ n_1, \ldots, n_k \ge 0}} \prod_{i=1}^k w_{n_i}.
\]
For a given integer $n\ge 1$ and $k\ge 1$ with $w_n^{(k)}>0$, the \emph{Gibbs distribution} $\mathbb{P}_n^{(k)}$ on ordered $k$-tuples $(n_1,\dots,n_k)$
of non-negative integers summing to $n$ is defined by
\begin{align*}
	\mathbb{P}_n^{(k)}(n_1,\dots,n_k) 
	= \frac{1}{w_n^{(k)} }  \prod_{i=1}^k w_{n_i}.
\end{align*}
Mixing over $k$ with the weights $(v_k)_{k\ge 0}$ yields the \emph{Gibbs partition model}:
\begin{align*}
	\mathbb{P}_n(n_1,\dots,n_k) 
	= \frac{v_k \prod_{i=1}^k w_{n_i}}{u_n}.
\end{align*}
We only consider values of $n$ for which $0<u_n<\infty$, ensuring that the Gibbs partition model \(\mathbb{P}_n\) is a well-defined probability measure. We let $(K_1, \ldots, K_{N_n})$ denote a sequence of  random non-negative integers that follows the  $\mathbb{P}_n$-distribution. The dependence of the $K_i$ on $n$ is implicit. We refer to the coordinates of this sequences as components. Hence  $N_n$ denotes the number of components. We let $K_{(1)} \ge K_{(2)} \ge \ldots$ denote the components ordered according to their size.

%The number of components $N_n$ in the Gibbs model is defined via
%\[
%    \Prb{N_n = k} = \sum_{n_1 + \ldots + n_k = n} \mathbb{P}_n(n_1,\dots,n_k), \qquad k \ge 0.
%\]

\vspace{1em}
\noindent\textbf{Standing assumptions.}
In this work, we focus on the case where the weight sequences satisfy the following conditions.
\begin{enumerate}
	\item \emph{Condition on component weights:} there exists a slowly varying function $L_w$ and a constant $\rho_w>0$ such that
	\begin{align}
		\label{eq:condw}
		w_n \sim L_w(n) n^{-1} \rho_w^{-n}, \qquad W(\rho_w) \in ]0,\infty[.
	\end{align}
	\item \emph{Condition on component-count weights:} for some $\alpha>-1$ and a slowly varying function $L_v$,
	\begin{align}
		\label{eq:condv}
		v_n \sim L_v(n) n^{\alpha-1}  W(\rho_w)^{-n}.
	\end{align}
\end{enumerate}

Throughout the following we assume that~\eqref{eq:condw} and~\eqref{eq:condv} hold. Let $X \ge 0$ denote a random non-negative integer with probability generating function
\begin{equation}
\label{eq:defX}
\Exb{z^X} = W(\rho_w z) / W(\rho_w).
\end{equation}
By Karamata's theorem \cite[Rem. 1.2.7]{Mikosch1999_EURANDOM_99-013}, $\Prb{X>x}$ is slowly varying, and satisfies
\begin{align}
	\label{eq:lox}
    L_w(n) = o(\Prb{X>n}). 
\end{align}

\subsection{Main results}
Our first main theorem describes the asymptotic distribution of the number of components.

\begin{theorem}
	\label{te:main1}
	Uniformly for all integers $k = x / \Prb{X>n}$ with $x$ restricted to a compact subset of $]0, \infty[$ we have
	\[
		\Prb{N_n = k}  \sim \Prb{X>n}x^\alpha \exp(-x) / \Gamma(\alpha +1).
	\]
	Furthermore,
	\[
		N_n  \Prb{X>n} \convd Z
	\]
	for a $\mathrm{Gamma}(\alpha+1,1)$-distributed random variable $Z$. For each $q \ge 1$
	\[
		\Exb{N_n^q} \sim \Prb{X>n}^q \frac{\Gamma(q+\alpha+1)}{\Gamma(\alpha+1)}.
	\]
\end{theorem}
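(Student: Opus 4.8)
The plan is to reduce the three assertions to two uniform coefficient estimates. Exponential tilting turns the combinatorics into a probabilistic statement: writing $\psi(z):=W(\rho_w z)/W(\rho_w)=\Exb{z^X}$ for the probability generating function from \eqref{eq:defX}, one has $W(z)^k=W(\rho_w)^k\psi(z/\rho_w)^k$, hence
\[
	w_n^{(k)}=W(\rho_w)^k\,\rho_w^{-n}\,\Prb{S_k=n},\qquad S_k:=X_1+\dots+X_k,
\]
with $X_1,\dots,X_k$ independent copies of $X$, and likewise $u_n\rho_w^n=[z^n]V\bigl(W(\rho_w)\psi(z)\bigr)$. Since $\mathbb{P}_n(N_n=k)=v_k w_n^{(k)}/u_n$ and $v_kW(\rho_w)^k\sim L_v(k)k^{\alpha-1}$ by \eqref{eq:condv}, everything follows once $\Prb{S_k=n}$ and $u_n\rho_w^n$ are understood in the window $k\,\Prb{X>n}=x+o(1)$ with $x$ in a compact subset of $]0,\infty[$.

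The analytic core is a uniform transfer theorem in a \emph{slowly varying} regime. From the identity $1-\psi(z)=(1-z)\sum_{m\ge0}\Prb{X>m}z^m$ and Karamata's Tauberian theorem one gets $1-\psi(z)\sim\Prb{X>1/(1-z)}$ as $z\to1^-$; hence on a Hankel contour $z=1-s/n$ around the singularity $k\bigl(1-\psi(z)\bigr)=x(1+o(1))$ uniformly, so $\psi(z)^k=\exp\bigl(-k(1-\psi(z))+O(x^2/k)\bigr)$ converges \emph{on the contour} to the constant $e^{-x}$. The first-order transfer term therefore vanishes, and one must extract the second-order behaviour: $\Prb{X>n/s}/\Prb{X>n}=1+o(1)$ with the correction governed by the de Haan index function of $\Prb{X>\cdot}$, and a careful contour estimate — with Potter's bounds securing uniformity over the growing range of $k$ — yields
\[
	\Prb{S_k=n}\ \sim\ \frac{h(n)}{n}\,x\,e^{-x},\qquad h(n):=\frac{n\,\Prb{X=n}}{\Prb{X>n}},
\]
uniformly for $x$ in compacts, with $h(n)\to0$ by \eqref{eq:lox}. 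Feeding the same computation into $[z^n]V\bigl(W(\rho_w)\psi(z)\bigr)$, where the inner singularity $1-\psi$ is now composed with the singularity of $V$ of type $(1-y/W(\rho_w))^{-\alpha}$ (up to slow variation) coming from \eqref{eq:condv}, the coefficient extraction effectively differentiates this singularity once and thereby raises its exponent to $\alpha+1$, giving
\[
	u_n\rho_w^n\ \sim\ \frac{\Gamma(\alpha+1)}{n}\,L_v\bigl(1/\Prb{X>n}\bigr)\,h(n)\,\Prb{X>n}^{-\alpha}.
\]
I expect this uniform second-order transfer to be the main obstacle: the naive leading term is identically zero, so the slowly varying corrections are exactly what matters, and one moreover needs the same estimates on the whole range of $k$ in a quantitative form such as $\mathbb{P}_n(N_n=k)\le C\,\Prb{X>n}\,(1+x^\alpha)\,e^{-cx}$ for the tail control below.

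Dividing the two displays — using $k^{\alpha-1}=x^{\alpha-1}\Prb{X>n}^{1-\alpha}$ and $L_v(k)\sim L_v(1/\Prb{X>n})$ by slow variation — the slowly varying factors $h(n)$ and $L_v(1/\Prb{X>n})$ cancel and leave $\mathbb{P}_n(N_n=k)\sim\Prb{X>n}\,x^\alpha e^{-x}/\Gamma(\alpha+1)$, which is the first assertion. Since $x\mapsto x^\alpha e^{-x}/\Gamma(\alpha+1)$ is the density of a $\mathrm{Gamma}(\alpha+1,1)$ variable $Z$, summing the local estimate against bounded continuous test functions and dominating the tails (large $x$, and $x$ near $0$) with the uniform bound above gives $N_n\Prb{X>n}\convd Z$; alternatively one gets this directly by running the $u_n$-argument on $[z^n]V\bigl(e^{-\lambda\Prb{X>n}}W(z)\bigr)$, which converges to the Laplace transform $(1+\lambda)^{-(\alpha+1)}$ of $Z$. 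Finally the same uniform bound yields uniform integrability of $(N_n\Prb{X>n})^q$ for every $q\ge1$, so $\Prb{X>n}^q\,\Exb{N_n^q}=\Exb{(N_n\Prb{X>n})^q}\to\Exb{Z^q}=\Gamma(q+\alpha+1)/\Gamma(\alpha+1)$, which is the third assertion.
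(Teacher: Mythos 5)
Your reduction is exactly right and matches the paper's: the exponential tilt $w_n^{(k)}=W(\rho_w)^k\rho_w^{-n}\Prb{S_k=n}$ and the identity $u_n\rho_w^n=\sum_k v_kW(\rho_w)^k\Prb{S_k=n}$ are precisely equations~\eqref{eq:uequation} ff., your target asymptotics for $\Prb{S_k=n}$ and $u_n\rho_w^n$ agree with~\eqref{eq:qb1} and Proposition~\ref{pro:partition} once you unwind $h(n)$, and the cancellations in the ratio, the passage to weak convergence, and the uniform-integrability argument for the moments are all correct and are what the paper does.

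The genuine gap is the uniform local limit theorem for $\Prb{S_k=n}$, together with the uniform upper bound $\Prb{S_k=n}\le ck\Pr{X=n}(1-\Prb{X>n})^k$ valid over the whole range of $k$, which you also need for tail/moment control and only mention in passing as something ``one moreover needs.'' You propose to derive the asymptotics by singularity analysis on $[z^n]\psi(z)^k$ along a Hankel contour, observing correctly that the leading term vanishes since $\psi(z)^k\to e^{-x}$ on the contour, and that the answer must come from second-order (de Haan) corrections. This observation is accurate, but it is exactly why the complex-analytic route is problematic here: $1-\psi(z)\sim\Prb{X>1/(1-z)}$ is a real-axis Tauberian statement, one has no control of the slowly varying correction $\Prb{X>n/s}/\Prb{X>n}-1$ uniformly in the complex variable $s$ along a Hankel contour, and the putative ``careful contour estimate'' is never carried out. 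Your own remark that this is ``the main obstacle'' is the acknowledgement that the key input is missing. The paper does not attempt this at all: it invokes a probabilistic one-big-jump result — \cite[Thm.~1.1]{zbMATH06666232} (Nagaev), together with \cite{zbMATH05836300,zbMATH06031483} — which gives both the uniform LLT~\eqref{eq:qb1} for $kL_w(n)\to 0$ and the global upper bound~\eqref{eq:qb2}, after a short reduction to the case $X>0$ a.s. Once you import those two statements, your write-up closes and is equivalent to the paper's proof; without them, the argument has a hole precisely where you flagged it.
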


In our proof we make use of a framework for random walk  developed in~\cite{zbMATH05836300,zbMATH06031483,zbMATH06666232}. 
%Under stronger assumptions (related to singular expansions of the generating series of the weight-sequences) a direct proof using methods of analytic combinatorics would be possible as well.
With the number of components under control, we turn to the extremal component sizes $K_{(1)} \ge K_{(2)} \ge \ldots$. We discover a condensation phenomenon, producing a unique giant component of size $n$ up to second order asymptotics of smaller order. The non-maximal components behave like independent copies of $(X \mid X \le n)$. In order to state this formally, we define the 'delete first maximum' operator
\[
\tau: \bigcup_{k \ge1} \ndN_0^k \to \bigcup_{k \ge 0} \ndN_0^k, \quad (x_1,\dots,x_k\bigr) \mapsto (x_1,\dots,x_{J-1},\,x_{J+1},\dots x_k),
\]
where $J=\min\{\,j: x_j=\max_{1\le i\le k}x_i\}$.  Let $\tilde{X}_1, \tilde{X}_2, \ldots$ denote independent copies of $(X \mid X \le n)$ that are independent of $N_n$.
\begin{theorem}
	\label{te:main2}
	We have
		\[
	d_{\mathrm{TV}}\Big(
	\mathcal{L}\big(\tau(K_1,\dots,K_{N_n})\big),
	\mathcal{L}\big(\tilde{X}_1,\dots, \tilde{X}_{N_n-1}\big)
	\Bigr)
	\;\longrightarrow\;0.
	\]
	Consequently,
	\[
		K_{(1)} / n \convp 1, \qquad K_{(2)} = o_p(n).
	\]
\end{theorem}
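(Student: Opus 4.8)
The plan is to transfer the statement to a conditioned random walk and then apply a one-big-jump estimate. Since $\tilde X_1,\tilde X_2,\dots$ are independent of $N_n$, the two laws appearing in $d_{\mathrm{TV}}$ are mixtures over $k\ge 1$ with the \emph{same} weights $\Prb{N_n=k}$, of the laws $\mathcal{L}\big(\tau(K_1,\dots,K_k)\mid N_n=k\big)$ and $\mathcal{L}(\tilde X_1,\dots,\tilde X_{k-1})$ on $\bigcup_{j\ge 0}\ndN_0^{j}$; by convexity of total variation it therefore suffices to bound $d_{\mathrm{TV}}\big(\mathcal{L}(\tau(K_1,\dots,K_k)\mid N_n=k),\mathcal{L}(\tilde X_1,\dots,\tilde X_{k-1})\big)$, and, using \Cref{te:main1} together with $d_{\mathrm{TV}}\le 1$ to dispose of the tails, to do so uniformly for $k=x/\Prb{X>n}$ with $x$ in a compact $[x_-,x_+]\subset\;]0,\infty[$, letting afterwards $[x_-,x_+]\uparrow\;]0,\infty[$. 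The first key point is the identity
\[
\mathcal{L}(K_1,\dots,K_k\mid N_n=k)=\mathcal{L}(X_1,\dots,X_k\mid S_k=n),
\]
where $X_1,X_2,\dots$ are i.i.d.\ copies of $X$ and $S_k=X_1+\dots+X_k$; indeed $\Prb{X=m}=w_m\rho_w^m/W(\rho_w)$, so for a tuple summing to $n$ the Gibbs weight $\prod_i w_{c_i}$ is proportional to $\prod_i\Prb{X=c_i}$.

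Next I would fix a small $\delta\in(0,1/2)$ and work on the \emph{good set} $G$ of tuples $(a_1,\dots,a_{k-1})$ of non-negative integers with $a_1+\dots+a_{k-1}\le\delta n$; on $G$ the complement $b:=n-\sum_i a_i$ lies in $[(1-\delta)n,n]$ and strictly exceeds every $a_i$, so $\tau$ acts on the conditioned walk by deleting exactly one coordinate equal to $b$. Exchangeability then yields, for $(a)\in G$,
\[
\Prb{\tau(X_1,\dots,X_k)=(a)\mid S_k=n}=\frac{k\,\Prb{X=b}\prod_{i}\Prb{X=a_i}}{\Prb{S_k=n}},\qquad \Prb{(\tilde X_1,\dots,\tilde X_{k-1})=(a)}=\frac{\prod_i\Prb{X=a_i}}{\Prb{X\le n}^{k-1}}.
\]
Writing $\mu,\nu$ for these two probability measures, on $G$ their ratio equals $C_k\cdot\Prb{X=b}/\Prb{X=n}$ with $C_k:=k\,\Prb{X=n}\Prb{X\le n}^{k-1}/\Prb{S_k=n}$; since \eqref{eq:condw} makes $m\mapsto\Prb{X=m}$ regularly varying of index $-1$, the uniform convergence theorem gives $\Prb{X=b}/\Prb{X=n}=1+o(1)$ uniformly over $b\in[(1-\delta)n,n]$, with an error that is $O(\delta)$ in the limit. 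On the other hand $\nu(G^c)\to 0$: one has $\tilde X_i\le n$ automatically, and by Karamata $\Exb{\tilde X_1}=\Exb{X\mid X\le n}\sim W(\rho_w)^{-1}nL_w(n)$, so $(k-1)\Exb{\tilde X_1}=O\big(nL_w(n)/\Prb{X>n}\big)=o(n)$ by \eqref{eq:lox}, whence Markov's inequality applies. Summing the ratio identity against $\nu$ over $G$ gives $\mu(G)=C_k\,\nu(G)(1+o(1))$, hence (as $\mu$ is a probability measure and $\nu(G)\to 1$) $C_k=(1-\mu(G^c))(1+o(1))$.

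Everything then comes down to the \emph{condensation estimate}
\[
\mu(G^c)=\Prb{\,S_k-\max\nolimits_{i\le k}X_i>\delta n\ \Big|\ S_k=n\,}\longrightarrow 0
\]
uniformly for $k$ in the relevant range — equivalently, to the sharp local limit theorem $\Prb{S_k=n}\sim k\,\Prb{X=n}\,\Prb{X\le n}^{k-1}$. I expect this to be the main obstacle. Granting it, $C_k\to 1$, the ratio on $G$ tends to $1$ uniformly, and combining with $\mu(G^c),\nu(G^c)\to 0$ yields $d_{\mathrm{TV}}(\mu,\nu)=O(\delta)+o(1)$; letting $\delta\to 0$ proves the first assertion. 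This estimate is exactly the one-big-jump phenomenon for regularly varying steps of tail index $0$, and the intended route is to extract it from the random-walk framework of \cite{zbMATH05836300,zbMATH06031483,zbMATH06666232} already invoked for \Cref{te:main1}; a self-contained proof would split the event $\{S_k=n,\ \max_i X_i\le(1-\delta)n\}$ according to whether some coordinate lies in a medium window $[\eta n,(1-\delta)n]$, bounding the medium contribution by a union bound exploiting the slow variation of $x\mapsto\Prb{X>x}$, bounding the remaining ``all below $\eta n$'' contribution by a large-deviation estimate for a truncated sum whose mean is $o(n)$ thanks to \eqref{eq:lox}, and finally comparing with the matching lower bound for $\Prb{S_k=n}$.

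Finally, the consequences follow readily. On $\{N_n=k\}$ one has $\sum\big(\text{entries of }\tau(K_1,\dots,K_k)\big)=n-K_{(1)}$ and $K_{(2)}=\max\big(\text{entries of }\tau(K_1,\dots,K_k)\big)\le\sum\big(\text{entries}\big)$. Conditionally on $N_n$, the sum $\sum_{i=1}^{N_n-1}\tilde X_i$ has mean $(N_n-1)\Exb{X\mid X\le n}$, which is $O_p\big(nL_w(n)/\Prb{X>n}\big)=o_p(n)$ because $N_n\Prb{X>n}\convd Z$; hence $\sum_{i=1}^{N_n-1}\tilde X_i=o_p(n)$ by conditional Markov. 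The total variation bound just established transfers this to $n-K_{(1)}=\sum\big(\text{entries of }\tau(K_1,\dots,K_{N_n})\big)=o_p(n)$, so $K_{(1)}/n\convp 1$, and since $K_{(2)}\le\sum\big(\text{entries of }\tau(K_1,\dots,K_{N_n})\big)$ we obtain $K_{(2)}=o_p(n)$.
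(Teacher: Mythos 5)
Your proposal is correct and follows essentially the same route as the paper. The paper isolates the one-big-jump statement as a standalone lemma (Theorem~\ref{te:cond}) for a conditioned random walk with a deterministic number of summands $k=k(n)$ satisfying $kL_w(n)\to 0$, and then in the proof of Theorem~\ref{te:main2} mixes over $N_n$ using Theorem~\ref{te:main1}; you do the same thing, only inlined via the mixture/convexity reduction and a compact window for $x=k\Prb{X>n}$. Your good set uses the cutoff $\delta n$ with a subsequent $\delta\to0$, whereas the paper directly produces a single sequence $r_n=o(n)$ from Markov's inequality (your Karamata computation is identical to equation~\eqref{eq:on}), which avoids the two-parameter limit but is otherwise the same argument. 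The local limit theorem you flag as the ``main obstacle,'' namely $\Prb{S_k=n}\sim k\,\Prb{X=n}\,\Prb{X\le n}^{k-1}$ uniformly for $kL_w(n)\to 0$, is precisely what the paper imports as~\eqref{eq:qb1} from~\cite{zbMATH06666232}; you correctly identify both the ingredient and its source, so no gap remains. One small remark on framing: once the uniform ratio on $G$ and $\nu(G^c)\to 0$ are in hand, $\mu(G^c)\to 0$ follows by summing the ratio identity over $G$ and normalising — you do not need to establish it as an independent a priori estimate, and your own $C_k=(1-\mu(G^c))(1+o(1))$ already encodes this — so the ``condensation estimate'' and the sharp local limit theorem are not two obstacles but one, exactly as in the paper. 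The derivation of the consequences $K_{(1)}/n\convp 1$ and $K_{(2)}=o_p(n)$ from the TV bound together with Karamata and conditional Markov matches the paper's (terser) closing argument.
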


A consequence of Theorem~\ref{te:main2} is that for all $\epsilon>0$ and all subsets $A_n$ with $\max(A_n) < (1-\epsilon)n $ the number
\[
F_A = \sum_{i=1}^{N_n} \one_{K_{i} \in A_n}
\]
of components with size in $A_n$ satisfies
\begin{align}
d_{\mathrm{TV}}\Big(
\mathcal{L}\big(F_{A_n}), \bin(N_n, \Prb{X \in A_n \mid X \le n})\Bigr)
\;\longrightarrow\;0.
\end{align}
The expectation of the binomial is asymptotically equivalent to $\frac{\Pr{X \in A_n}}{\Pr{X>n}}$. For example, if $A_n = A$ is constant,
\begin{align}
		F_{A} \Pr{X>n} \convdis \Pr{X \in A} Z.
\end{align}

In order to determine the higher order asymptotics of the largest component $K_{(1)}$ and at the same time the precise limiting behavior of the $i$th largest component $K_{(i)}$ for $i \ge 2$, we require an additional regularity assumption:
\begin{align}
	\label{eq:regc}
	\lim_{n \to  \infty}\frac{\Prb{X>n^t}}{\Prb{X>n}} \quad \text{exists for all $0<t<1$.}
\end{align}
This regularity condition may be characterized:
\begin{proposition}
	\label{pro:cong}
	\begin{enumerate}
		\item (Equivalence)
	Assumption~\eqref{eq:regc} holds if and only if there exists $\gamma \ge 0$ with
	\begin{align}
		\label{eq:regc1}
	\lim_{n \to \infty} \frac{\Prb{X>n^t}}{\Prb{X>n}} = t^{-\gamma}
	\end{align} for all $0<t<1$.
		\item (Sufficient condition) If there is $\gamma>0$ and some slowly varying function $L$ with
		\begin{align}
			\label{eq:regc2}
			L_w(n) = \frac{L(\log n)}{\log^{1+\gamma}{n}},
		\end{align}
		  then~\eqref{eq:regc1} holds with the same $\gamma$. In this case
		  \[
		  	\Prb{X>n} \sim W(\rho_w)^{-1} \frac{L(\log n)}{\gamma  \log^{\gamma}{n}}.
		  \]
	\item (Partially necessary condition) If $L_w$ is eventually monotone and~\eqref{eq:regc1} holds with $\gamma>0$, then~\eqref{eq:regc2} holds for some slowly varying function $L$.
	\end{enumerate}
\end{proposition}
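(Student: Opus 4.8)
The plan is to note that the substitution $n=e^{y}$ turns the ratio in \eqref{eq:regc1} into an ordinary regular-variation statement, after which all three parts reduce to routine Karamata-type arguments. I would first record the \emph{reformulation}: by \eqref{eq:defX}, $\Pr{X=m}=w_m\rho_w^m/W(\rho_w)$, so \eqref{eq:condw} gives $\Pr{X=m}\sim L_w(m)/(mW(\rho_w))$; summing and using Karamata's theorem, $\bar{F}(n):=\Pr{X>n}$ is slowly varying and, being a tail probability, nonincreasing in $n$. Extending $\bar{F}$ to $[1,\infty)$ by $\bar{F}(y)=\Pr{X>\lfloor y\rfloor}$ and putting $\phi(y):=\bar{F}(e^{y})$, monotonicity and slow variation make the passage between integer and real arguments harmless, and \eqref{eq:regc1} for $0<t<1$ becomes exactly $\phi(ty)/\phi(y)\to t^{-\gamma}$, i.e.\ $\phi\in\mathrm{RV}_{-\gamma}$.

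For \emph{part (1)}, the implication \eqref{eq:regc1}$\Rightarrow$\eqref{eq:regc} is immediate, so I would focus on the converse. Assuming the (finite, hence $\ge1$ by monotonicity of $\phi$) limit $g(t)=\lim_{y\to\infty}\phi(ty)/\phi(y)$ exists for each $t\in(0,1)$, I would split $\phi((st)y)/\phi(y)=\bigl(\phi(s(ty))/\phi(ty)\bigr)\bigl(\phi(ty)/\phi(y)\bigr)$ and let $y\to\infty$ (so $ty\to\infty$) to obtain the multiplicative Cauchy equation $g(st)=g(s)g(t)$ on $(0,1)$; combined with the monotonicity of $g$ this forces $g(t)=t^{-\gamma}$ for a unique $\gamma\ge0$. (Alternatively, $\phi$ meets the hypotheses of the characterization theorem for regularly varying functions.)

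For \emph{part (2)}, assuming \eqref{eq:regc2} I would write $\Pr{X=m}\sim W(\rho_w)^{-1}L(\log m)\,m^{-1}(\log m)^{-1-\gamma}$, sum over $m>n$ (the summand being eventually monotone, the sum is asymptotic to the matching integral), and substitute $u=\log m$ to get $\bar{F}(n)\sim W(\rho_w)^{-1}\int_{\log n}^{\infty}u^{-1-\gamma}L(u)\,du$. Since $\gamma>0$, Karamata's theorem evaluates this integral as $\sim\gamma^{-1}(\log n)^{-\gamma}L(\log n)$, giving the asserted asymptotics of $\Pr{X>n}$; feeding it back yields $\bar{F}(n^{t})/\bar{F}(n)\sim t^{-\gamma}L(t\log n)/L(\log n)\to t^{-\gamma}$ by slow variation of $L$, which is \eqref{eq:regc1} with the same $\gamma$.

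For \emph{part (3)}, assuming \eqref{eq:regc1} with $\gamma>0$ and $L_w$ eventually monotone, I would first note $L_w(n)=o(\bar{F}(n))$ by \eqref{eq:lox} and $\bar{F}(n)\to0$ (as $\gamma>0$), so $L_w$ is eventually nonincreasing; by the reformulation $\phi\in\mathrm{RV}_{-\gamma}$, say $\phi(y)=y^{-\gamma}\ell(y)$ with $\ell$ slowly varying. Running the computation of part (2) backwards — legitimate because $u\mapsto L_w(e^{u})$ is eventually monotone — would give the representation $\phi(y)\sim W(\rho_w)^{-1}\int_{y}^{\infty}L_w(e^{u})\,du$, to which the monotone density theorem applies (regularly varying tail integral, eventually monotone density), yielding $L_w(e^{y})\sim\gamma W(\rho_w)\phi(y)/y=\gamma W(\rho_w)\ell(y)y^{-1-\gamma}$. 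Then $L(y):=y^{1+\gamma}L_w(e^{y})$ satisfies $L_w(n)=L(\log n)/\log^{1+\gamma}n$ by construction while $L(y)\sim\gamma W(\rho_w)\ell(y)$ is slowly varying, establishing \eqref{eq:regc2}. The hard part is really this last passage from the density $L_w$ to the tail $\bar{F}$: the slowly-varying bookkeeping and sum/integral comparisons are routine, but the monotone density theorem step genuinely needs the eventual monotonicity of $L_w$, which is exactly why the converse in (3) is only `partially necessary'.
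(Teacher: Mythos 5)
Your proposal is correct and follows essentially the same route as the paper: pass to $\phi(y)=\Prb{X>e^y}$; derive the power law in (1) from the multiplicative Cauchy equation together with monotonicity of $g$; compute the tail asymptotics in (2) by sum--integral comparison plus Karamata's theorem; and prove (3) by writing the tail as $W(\rho_w)^{-1}\int_y^\infty L_w(e^u)\,\mathrm{d}u$ and invoking the monotone density theorem. One small caveat: in (2) the summand $\Prb{X=m}$ (and $L(\log m)/(m\log^{1+\gamma}m)$) need not be eventually monotone, so the sum--integral step there rests on Karamata's theorem for slowly varying functions rather than on monotonicity as you suggest, in contrast to (3) where the eventual monotonicity of $L_w$ genuinely carries the weight --- but this does not affect the structure of the argument.
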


Condition~\eqref{eq:regc} allows us to define a locally finite Borel measure $\Lambda$ on $]0,1]$ such that
\[
	\Lambda([a,b]) = a^{-\gamma} - b^{-\gamma}
\]
for all $0<a<b\le1$. 

\begin{theorem}
	\label{te:main3}
	Suppose that~\eqref{eq:regc} holds.
	The point process \[
	\sum_{i=2}^ {N_n} \delta_\frac{\log(K_{(i)})}{\log(n)}\one_{K_{(i)}>1}
	\]
	converges towards a Cox process directed by the random measure $Z\Lambda$ for a $\mathrm{Gamma}(\alpha+1,1)$ distributed random variable $Z$.
\end{theorem}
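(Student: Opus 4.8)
\emph{Proof plan.}
The plan is to first invoke Theorem~\ref{te:main2} to replace the non-maximal components by an independent family, and then run a mixed binomial-to-Poisson point process argument. Write
\[
\Pi_n:=\sum_{i=2}^{N_n}\delta_{\log(K_{(i)})/\log(n)}\,\one_{K_{(i)}>1},\qquad
\Xi_n:=\sum_{j=1}^{N_n-1}\delta_{\log(\tilde X_j)/\log(n)}\,\one_{\tilde X_j>1}.
\]
Since the coordinates of $\tau(K_1,\dots,K_{N_n})$ listed in decreasing order are exactly $K_{(2)}\ge\cdots\ge K_{(N_n)}$, and since total variation distance does not increase under the measurable map sending a finite tuple $(x_1,\dots,x_k)$ to the point measure $\sum_{i=1}^k\delta_{\log(x_i)/\log(n)}\one_{x_i>1}$, Theorem~\ref{te:main2} yields $d_{\mathrm{TV}}\big(\mathcal{L}(\Pi_n),\mathcal{L}(\Xi_n)\big)\to 0$. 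Hence it suffices to prove that $\Xi_n$, all of whose atoms lie in $]0,1]$ (they appear only when $2\le\tilde X_j\le n$), converges in distribution, for the vague topology on $]0,1]$, to the Cox process directed by $Z\Lambda$.

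Next I would compute the Laplace functional of $\Xi_n$. Fix $f\in C_c^+(]0,1])$ with $\mathrm{supp}(f)\subseteq[a,1]$ for some $a>0$, and set $g:=1-e^{-f}\in C_c^+(]0,1])$, extended by zero. Conditioning on $N_n$ and using that $\tilde X_1,\tilde X_2,\dots$ are i.i.d.\ copies of $(X\mid X\le n)$ independent of $N_n$,
\[
\Exb{e^{-\Xi_n(f)}}=\Exb{(1-\psi_n)^{N_n-1}},\qquad \psi_n:=\Exb{g(\log(\tilde X)/\log(n))\,\one_{\tilde X>1}},
\]
and for $n$ large enough that $n^a\ge 2$ the indicator is automatic on the support of the integrand, so $\psi_n=\Prb{X\le n}^{-1}\Exb{g(\log(X)/\log(n))}$. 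The key step is
\[
\frac{\psi_n}{\Prb{X>n}}\;\longrightarrow\;\int_{]0,1]}(1-e^{-f})\,d\Lambda\;=:\;c .
\]
To prove it, introduce the measure $\nu_n$ on $]0,1]$ given by $\nu_n(B):=\Prb{X>n}^{-1}\Prb{\log(X)/\log(n)\in B}$. For each $0<s\le 1$,
\[
\nu_n\big([s,1]\big)=\frac{\Prb{X>\lceil n^s\rceil-1}-\Prb{X>n}}{\Prb{X>n}}\;\longrightarrow\; s^{-\gamma}-1=\Lambda\big([s,1]\big),
\]
where one replaces $\lceil n^s\rceil-1$ by $n^s$ using the slow variation (hence index-$0$ regular variation) of the tail $\Prb{X>\cdot}$ and then applies~\eqref{eq:regc1}. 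Together with $\Prb{X\le n}\to 1$ this gives $\nu_n\to\Lambda$ vaguely on $]0,1]$, whence $\psi_n/\Prb{X>n}=\Prb{X\le n}^{-1}\int g\,d\nu_n\to\int g\,d\Lambda=c$ because $g\in C_c^+(]0,1])$.

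Finally I would feed in Theorem~\ref{te:main1}. Since $\psi_n\to 0$ and $N_n\Prb{X>n}\convd Z$, Slutsky's lemma gives $(N_n-1)\psi_n=\big(\psi_n/\Prb{X>n}\big)\big(N_n\Prb{X>n}\big)-\psi_n\convd cZ$, and since moreover $\log(1-\psi_n)/(-\psi_n)\to 1$ this upgrades to $(1-\psi_n)^{N_n-1}\convd e^{-cZ}$; as these variables are bounded by $1$ their expectations converge as well, so
\[
\Exb{e^{-\Xi_n(f)}}\;\longrightarrow\;\Exb{e^{-cZ}}=\Exb{\exp\!\Big(-Z\!\int_{]0,1]}(1-e^{-f})\,d\Lambda\Big)} .
\]
The right-hand side is precisely the Laplace functional at $f$ of the Cox process directed by $Z\Lambda$ (given $Z$, a Poisson process of intensity $Z\Lambda$, which is a.s.\ locally finite on $]0,1]$; when $\gamma=0$ one has $c=0$ and the limit is the empty configuration). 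Since convergence of Laplace functionals on $C_c^+(]0,1])$ is equivalent to convergence in distribution of point processes, this identifies the limit of $\Xi_n$, and hence, by the first paragraph, of $\Pi_n$, completing the argument.

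The hard part will be the two limiting statements involving $\Prb{X>\cdot}$: establishing the vague convergence $\nu_n\to\Lambda$, where one must absorb the integer rounding $\lceil n^s\rceil$ into the slow variation of the tail of $X$ before invoking~\eqref{eq:regc1}, and correctly combining the \emph{deterministic} limit $\psi_n/\Prb{X>n}\to c$ with the \emph{distributional} limit $N_n\Prb{X>n}\convd Z$ so as to produce the mixed Poisson (Cox) limit. Everything else is routine point process bookkeeping.
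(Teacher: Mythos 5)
Your proposal is correct, and it follows the same overall skeleton as the paper -- reduce to the i.i.d.\ family $(\tilde X_j)$ via Theorem~\ref{te:main2}, compute the Laplace functional, and establish the vague convergence $\nu_n\to\Lambda$ on $]0,1]$ -- but it handles the mixing over $N_n$ in a genuinely different and cleaner way. The paper first proves a separate statement for deterministic sequences $k_n$ with $k_n\Prb{X>n}\to x$ (the unnamed theorem at the start of Subsection~\ref{sec:approx}), obtaining a Poisson limit with intensity $x\Lambda$, and then in the proof of Theorem~\ref{te:main3} it sums over $k$ using the local limit theorem from Theorem~\ref{te:main1} together with a uniformity-in-$x_k$ claim ("it is easy to see from the proof of~\eqref{eq:conv}"), discretising $[a,b]$ and controlling the tails by an $\epsilon$-argument. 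You bypass the deterministic-$k_n$ theorem entirely: conditioning on $N_n$ inside the Laplace functional gives the exact identity $\Exb{e^{-\Xi_n(f)}}=\Exb{(1-\psi_n)^{N_n-1}}$, after which the deterministic limit $\psi_n/\Prb{X>n}\to c$ combined with the distributional limit $N_n\Prb{X>n}\convd Z$ yields the answer by Slutsky and bounded convergence. This buys you a shorter argument that only uses the weak limit of $N_n\Prb{X>n}$ (not the local limit theorem or the uniform version of the conditional asymptotics), while the paper's route has the side benefit of isolating the conditional Poisson limit for fixed $k_n$ as a statement of independent interest. Your treatment of the $\gamma=0$ degenerate case ($\Lambda=0$, empty limiting configuration) is a nice explicit addition that the paper leaves implicit.
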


%% Use Skorokhod's embedding theorem and with N_n / ... point process convergence jointly.

Suppose that~\eqref{eq:regc} holds with $\gamma>0$. Then the limiting point process in Theorem~\ref{te:main3} is simple, and we let $Y_{(1)}>Y_{(2)}> \ldots$ denote its ranked points. Since $N_n$ grows at a slowly varying scale by Theorem~\ref{te:main1}, it follows that
\[
	0 \le n - \sum_{i=1}^{m} K_{(i)} \le (N_n -m) K_{(m+1)} = o_p(K_{(m)}),
\]
yielding
\[
	K_{(1)} = n - \sum_{i=2}^{m} K_{(i)} + o_p(K_{(m)}).
\]
Using Skorokhod's representation theorem, we may find a probability space where the joint limits of the extremal component sizes and asymptotic bounds for the number of components hold almost surely. The remainder $n - \sum_{i=1}^{m} K_{(i)} \le (N_n -m) K_{(m+1)}$ is bounded from below by $K_{(m+1)} = n^{Y_{(m+1)} + o(1)}$ and from above by $(N_n -m) K_{(m+1)} = n^{o(1)} K_{(m+1)}  = n^{Y_{(m+1)} + o(1)}$. Hence, on this space we have for each $m \ge 2$
\[
	K_{(1)} = n - \sum_{i=2}^{m} n^{Y_{(i)} + o(1)}, \qquad K_{(m)} = n^{Y_{(m)} + o(1)}.
\]

\subsection*{Plan of the paper} In Section~\ref{sec:proofs} we collect the proofs of our main results. Specifically, we prove Theorem~\ref{te:main1} in Subsection~\ref{sec:numco}.   Theorem~\ref{te:main2} is shown in Subsection~\ref{sec:condensation}. Proposition~\ref{pro:cong} is proved in Subsection~\ref{sec:character}. We verify Theorem~\ref{te:main3} in Subsection~\ref{sec:approx}. 

In Section~\ref{sec:Apps} we provide applications of our main results to combinatorial structures and random walks. Our applications encompass random walks in the cube in Subsection~\ref{sec:randomwalkinthecube}, Delannoy walks in Subsection~\ref{sec:delannoywalks}, 
Dyck bridges and Hadamard products in Subsection~\ref{ssec:DyckBridges}, colored walks in Subsection~\ref{sec:coloredwalks}, Kreweras, lazy Kreweras and diagonal square lattice walks in Subsection~\ref{SubSec:squareWalks}, sampling without replacement urns in Subsection~\ref{sec:urn}, and card guessing games in Subsection~\ref{sec:cards}. We close the section with an outlook on further families of lattice paths in Subsection~\ref{sec:outlook}.

\subsection*{Notation}

We use the notation $\ndN_0 = \{0, 1, \ldots\}$ and $\ndN= \{1,2, \ldots\}$. For $n \in \ndN_0$ we set $[n] = \{1, \ldots, n\}$.
All unspecified limits are taken as $n \to \infty$. We let $\convd$ denote convergence in distribution, and $\convp$ convergence in probability. $d_{\mathrm{TV}}$ denotes the total variational distance. The law of a random variable $X$ is denoted by $\mathcal{L}(X)$. For a random real number $Y$, we let $o_p(Y)$ denote the product of $Y$ and an unspecified random variable that tends in probability to zero.  The $n$th coefficient of a power series $f(z)$ is denoted by $[z^n]f(z)$. For multivariate series $f(x_1, x_2, \ldots)$ we use analogous notation~$[x_1^{n_1} x_2^{n_2} \cdots]f(x_1, x_2, \ldots)$. Concerning our combinatorial applications, we give for concrete objects links to the On-Line Encyclopedia of Integer Sequences - \href{https://oeis.org/}{OEIS}, whenever possible.

\section{Proofs of our main results}
\label{sec:proofs}

\subsection{The number of components.}
\label{sec:numco}

We let $X_1, X_2, \ldots$ denote independent copies of $X$ and set $S_n = \sum_{i=1}^n X_i$. The following was shown in~\cite[Thm. 1.1]{zbMATH06666232}:

\begin{proposition}
	We have uniformly for all $k$ with $k L_w(n) \to 0$,
	\begin{align}
		\label{eq:qb1}
		\Prb{S_k = n} \sim k \Pr{X=n}(1 - \Prb{X>n})^k.
	\end{align}
	There exists  $c>0$ independent of $n$ such that for all large enough $n$ and $k \ge 1$
	\begin{align}
		\label{eq:qb2}
		\Prb{S_k = n} \le c k \Pr{X=n}(1 - \Prb{X>n})^k.
	\end{align}
\end{proposition}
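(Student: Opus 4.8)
The plan is to reduce the statement to a local limit theorem for a \emph{truncated} random walk, where the one-big-jump principle applies transparently. Since each $X_i$ is non-negative, $\{S_k = n\}\subseteq\bigcap_{i=1}^k\{X_i\le n\}$, so restricting the law of $(X_1,\dots,X_k)$ to this event and renormalizing yields the exact identity
\[
	\Prb{S_k = n} = (1-\Prb{X>n})^k\,\Prb{\tilde S_k = n},
\]
where $\tilde S_k = \tilde X_1 + \dots + \tilde X_k$ is a sum of $k$ independent copies of $\tilde X$, a random variable distributed as $(X \mid X \le n)$. Since $\Prb{\tilde X = n} = \Prb{X=n}/(1-\Prb{X>n})$ and $\Prb{X>n}\to 0$, both parts of the proposition follow once we establish (i) $\Prb{\tilde S_k = n}\sim k\,\Prb{\tilde X = n}$ uniformly for all $k$ with $k L_w(n) \to 0$, and (ii) $\Prb{\tilde S_k = n}\le c\,k\,\Prb{\tilde X = n}$ for some constant $c>0$, all $k\ge 1$ and all large $n$ (this gives \eqref{eq:qb2} up to the harmless factor $(1-\Prb{X>n})^{-1}\le 2$).

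The next step fixes the relevant scale. Since $\Prb{X=j}\sim L_w(j)/(j W(\rho_w))$, Karamata's theorem \cite{Mikosch1999_EURANDOM_99-013} gives $\Exb{X\one_{X\le n}} = \sum_{j\le n} j\,\Prb{X=j}\sim W(\rho_w)^{-1}\sum_{j\le n}L_w(j)\sim W(\rho_w)^{-1} n L_w(n)$; as $\Prb{X\le n}\to 1$ this gives
\[
	\Exb{\tilde S_k} = k\,\Exb{\tilde X}\sim W(\rho_w)^{-1}\,k\,n\,L_w(n),
\]
which is $o(n)$ precisely in the regime $k L_w(n)\to 0$. Thus $n$ lies far above the mean of $\tilde S_k$ --- the big-jump domain --- so that $\{\tilde S_k = n\}$ should be produced by a single summand close to $n$ while the remaining $k-1$ stay of typical, hence $o(n)$, size; note also $k\,\Prb{\tilde X = n}\sim k\,\Prb{X=n}\asymp k L_w(n)/n\to 0$, consistently with (i).

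For the lower bound in (i) I restrict to the event that exactly one summand exceeds $n/2$; by symmetry, and since $\{\tilde S_{k-1}<n/2\}$ already forces all other summands below $n/2$,
\[
	\Prb{\tilde S_k = n}\ \ge\ k\sum_{0\le m< n/2}\Prb{\tilde X = n-m}\,\Prb{\tilde S_{k-1}=m}\ \ge\ (1-o(1))\,k\,\Prb{\tilde X=n}\,\Prb{\tilde S_{k-1}\le \epsilon n},
\]
where the last step keeps only $m\le \epsilon n$ and uses $\Prb{\tilde X = n-m}\sim\Prb{\tilde X = n}$ uniformly for $m\le\epsilon n$ (uniform convergence of the slowly varying $L_w$ together with $n/(n-m)\to 1$), while $\Prb{\tilde S_{k-1}\le\epsilon n}\ge 1-\Exb{\tilde S_{k-1}}/(\epsilon n)\to 1$ by Markov's inequality and the mean estimate; one then lets $\epsilon\downarrow 0$. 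For the upper bound in (i) and the bound (ii) one splits according to whether some $\tilde X_i$ exceeds $n/2$: the "one big jump" part $k\sum_{0\le m< n/2}\Prb{\tilde X = n-m}\Prb{\tilde S_{k-1}=m}$ is treated as above, splitting the sum at $m = \epsilon n$ and using $\Prb{\tilde X = n-m}\le (2+o(1))\Prb{\tilde X=n}$ and $\Prb{\tilde S_{k-1}>\epsilon n}\to 0$ on the tail, giving $(1+o(1))\,k\,\Prb{\tilde X=n}$ when $k L_w(n)\to 0$ and $\le 3\,k\,\Prb{\tilde X=n}$ uniformly in $k$; the "no big jump" part $\Prb{\tilde S_k = n,\ \max_{i\le k}\tilde X_i\le n/2}$ forces two or more summands of order $n$, and isolating the two largest (each then of size $\approx n/2$, the remaining $k-2$ contributing a factor $\Exb{\tilde S_{k-2}}\asymp k n L_w(n)$) shows it is of relative order $(k L_w(n))^2$, hence $o(k\,\Prb{\tilde X=n})$.

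Making the last estimate, and especially the uniform bound (ii), rigorous is the technical heart, and I expect it to be the main obstacle. For (ii) I would argue by induction on $k$, in the spirit of Kesten's bound for subexponential convolutions: from $\Prb{\tilde S_k=n}=\sum_{j=0}^n\Prb{\tilde X=j}\,\Prb{\tilde S_{k-1}=n-j}$, control the ranges of $j$ near $0$ and near $n$ by the inductive hypothesis and the near-flatness of $j\mapsto\Prb{\tilde X=j}$ at scale $n$, and the middle range $\delta n\le j\le(1-\delta) n$ via a Kolmogorov--Rogozin type bound on $\sup_m\Prb{\tilde S_{k-1}=m}$ together with $\Prb{\tilde X\in[\delta n,(1-\delta) n]}\asymp L_w(n)$. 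The delicate point is uniformity across the entire range of $k$: once $k$ is so large that $\Exb{\tilde S_k}\gg n$, the event $\{\tilde S_k=n\}$ becomes a \emph{lower} large deviation requiring a separate (and ultimately trivial, once $k\,\Prb{\tilde X=n}\gtrsim 1$) argument, and the multiple-big-jump corrections must be bounded uniformly in $k$ rather than for a fixed $k$. This uniform random-walk input is precisely what the framework of \cite{zbMATH05836300,zbMATH06031483,zbMATH06666232} provides, and I would invoke it to close these gaps.
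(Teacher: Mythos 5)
Your reduction to the truncated walk $\tilde S_k = \tilde X_1 + \dots + \tilde X_k$ with $\tilde X_i$ distributed as $(X \mid X \le n)$ is clean, and the one-big-jump sketch for~\eqref{eq:qb1} --- the mean estimate $\Exb{\tilde S_k} \sim W(\rho_w)^{-1}\, k\, n\, L_w(n) = o(n)$, the symmetry lower bound, and the uniform slow variation giving $\Prb{\tilde X = n-m}\sim\Prb{\tilde X=n}$ for $m\le\epsilon n$ --- is correct in spirit. But the proposal is not really a different route from the paper: the paper's proof of this proposition consists of citing \cite[Thm.~1.1]{zbMATH06666232} together with a one-line conditioning argument, and you likewise defer the hard part, namely the uniformity of~\eqref{eq:qb2} over all $k\ge 1$, to the very same references. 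The effort spent re-deriving~\eqref{eq:qb1} does not close that gap, because your heuristic estimate for the no-big-jump contribution is only adequate in the regime $kL_w(n)\to 0$ and provides nothing uniform in $k$.

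What the paper actually adds to the citation, and what your proposal misses, is this: \cite[Thm.~1.1]{zbMATH06666232} assumes almost surely strictly positive steps and accordingly states~\eqref{eq:qb2} only for $k\le n$. The paper writes $\Prb{S_k=n}$ as a binomial mixture over the number $j\le\min(k,n)$ of strictly positive summands and applies the reference to $j$ independent copies of $(X\mid X>0)$. Your truncation $\tilde X = (X\mid X\le n)$ does \emph{not} remove the atom at zero, so the reference cannot be applied to $\tilde S_k$ as-is and the same conditioning step is still needed. Also, the regime you call ``ultimately trivial once $k\Prb{\tilde X=n}\gtrsim 1$'' begins only at $k\gtrsim n/L_w(n)\gg n$, so the range $n\lesssim k\lesssim n/L_w(n)$ --- precisely where the reference, extended by the paper's decomposition, does the work --- is covered by neither your trivial bound nor your sketch.
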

To be precise,~\cite[Thm. 1.1]{zbMATH06666232} considers step-distributions that are almost surely positive and hence states~\eqref{eq:qb2} for $k \le n$. However, we may write
\begin{align*}
	\Prb{S_k = n} &= \sum_{j=1}^{\min(k,n)} \binom{k}{j} \Pr{X=0}^{k-j} \Pr{X>0}^j \Prb{\sum_{i=1}^j X_i = n \mid X_1, \ldots, X_j >0}.
\end{align*}
Applying~\cite[Thm. 1.1]{zbMATH06666232} to the sum of independent copies of $(X \mid X >0)$ and a quick calculation then yields~\eqref{eq:qb2}.

See also~\cite{zbMATH05836300} for  tail-probabilities of these sums, and~\cite{zbMATH06031483} for a strong renewal theorem.
\begin{proposition}
	\label{pro:partition}
	We have
	\[
	u_n \sim \rho_w^{-n} \Pr{X=n} L_v(1 / p_n) p_n^{-\alpha-1} \Gamma(\alpha+1).
	\]
\end{proposition}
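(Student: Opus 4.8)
The plan is to rewrite $u_n$ as a weighted sum of the local probabilities $\Prb{S_k=n}$ and then feed in the two estimates~\eqref{eq:qb1} and~\eqref{eq:qb2}. From~\eqref{eq:defX} we have $\Prb{X=m}=\rho_w^m w_m/W(\rho_w)$, hence $\Prb{S_k=n}=\rho_w^n w_n^{(k)}/W(\rho_w)^k$, so that
\[
	u_n=\sum_{k\ge1}v_k w_n^{(k)}=\rho_w^{-n}\sum_{k\ge1}a_k\,\Prb{S_k=n},\qquad a_k:=v_k W(\rho_w)^k .
\]
By~\eqref{eq:condv} the coefficients satisfy $a_k\sim L_v(k)k^{\alpha-1}$; in particular $(a_k)_k$ is regularly varying of index $\alpha-1>-1$ and independent of $n$. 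We also recall $p_n=\Prb{X>n}\to0$ (as $X$ is a proper random variable) and $L_w(n)=o(p_n)$ by~\eqref{eq:lox}.

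Next I would split the sum at a large threshold $M/p_n$, where $M$ is a constant to be sent to infinity after $n$. On the range $1\le k\le M/p_n$ we have $kL_w(n)\le ML_w(n)/p_n\to0$, so~\eqref{eq:qb1} applies uniformly and gives $\Prb{S_k=n}\sim k\Prb{X=n}(1-p_n)^k$; since $a_k k\sim L_v(k)k^\alpha$ and the sum is dominated by $k$ of order $1/p_n$, this yields
\[
	\sum_{1\le k\le M/p_n}a_k\,\Prb{S_k=n}\ \sim\ \Prb{X=n}\sum_{1\le k\le M/p_n}L_v(k)k^\alpha(1-p_n)^k .
\]
For the tail $k>M/p_n$ I would use the uniform bound~\eqref{eq:qb2}, $\Prb{S_k=n}\le ck\Prb{X=n}(1-p_n)^k$, reducing matters to $\sum_{k>M/p_n}L_v(k)k^\alpha(1-p_n)^k$. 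Picking a small $\delta\in(0,\alpha+1)$, a Potter bound gives $L_v(k)\le CL_v(1/p_n)(kp_n)^\delta$ for $kp_n>M>1$, and with $(1-p_n)^k\le e^{-p_nk}$ and a Riemann-sum comparison one gets
\[
	\sum_{k>M/p_n}L_v(k)k^\alpha(1-p_n)^k\ \le\ C\,L_v(1/p_n)\,p_n^{-\alpha-1}\,\Gamma(\alpha+\delta+1,M),
\]
with $\Gamma(\cdot,M)$ the upper incomplete Gamma function, so this term is negligible relative to the main one once $M$ is large.

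Finally, by the standard Abelian (Karamata) theorem for power series with regularly varying coefficients, $\sum_{k\ge1}L_v(k)k^\alpha x^k\sim\Gamma(\alpha+1)L_v(1/(1-x))(1-x)^{-\alpha-1}$ as $x\to1^-$; with $x=1-p_n$ this gives $\sum_{k\ge1}L_v(k)k^\alpha(1-p_n)^k\sim\Gamma(\alpha+1)L_v(1/p_n)p_n^{-\alpha-1}$. Combining the three displays and letting first $n\to\infty$ and then $M\to\infty$ yields
\[
	u_n\ \sim\ \rho_w^{-n}\,\Prb{X=n}\,L_v(1/p_n)\,p_n^{-\alpha-1}\,\Gamma(\alpha+1),
\]
as claimed. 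The main obstacle is the uniform control of the tail $k>M/p_n$: one must justify the Potter-bound and Riemann-sum estimate carefully, noting in particular that the spurious factor $p_n^{\delta}$ cancels, and then verify that the iterated limit $\lim_{M\to\infty}\lim_{n\to\infty}$ collapses to the stated asymptotic. The remaining ingredients — the Abelian theorem and the uniform convergence theorem for slowly varying functions, used to pass from $a_k k$ to $L_v(k)k^\alpha$ inside the sum — are routine.
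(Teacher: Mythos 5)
Your proof is correct and follows essentially the same strategy as the paper: rewrite $u_n\rho_w^n=\sum_{k}v_kW(\rho_w)^k\Prb{S_k=n}$, apply~\eqref{eq:qb1} uniformly on a range of $k$ of order $1/p_n$, and bound the rest via~\eqref{eq:qb2} and Potter bounds. The only cosmetic difference is in evaluating the bulk: the paper restricts to a window $a/p_n<k<b/p_n$, passes to a Riemann integral $\int_a^b x^\alpha e^{-x}\,\mathrm{d}x$, and then lets $a\to0$, $b\to\infty$; you instead keep the whole range $k\le M/p_n$ and invoke Karamata's Abelian theorem for power series with regularly varying coefficients to produce the factor $\Gamma(\alpha+1)L_v(1/p_n)p_n^{-\alpha-1}$ directly. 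Both routes need the same tail control and the same small-$k$ negligibility observation, so they are the same proof modulo packaging.
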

\begin{proof}
	By definition of $X$, we have
	\begin{align}
		\label{eq:uequation}
		u_n \rho_w^n = \sum_{k \ge 0} v_k W(\rho_w)^k  \Pr{S_k = n}.
	\end{align}
	We write $p_n := \Prb{X>n}$ and set $k = x/p_n$ for integers $k$. Let $0<a<b$. By~\eqref{eq:lox} and~\eqref{eq:qb1} we have
	\begin{align}
		\label{eq:a1}
		\sum_{\frac{a}{p_n} < k < \frac{b}{p_n}} v_k W(\rho_w)^k  \Pr{S_k = n} &\sim \Pr{X=n} \sum_{\frac{a}{p_n} < k < \frac{b}{p_n}} L_v(k) k^{\alpha}  (1 - p_n)^k \\
		&\sim \Pr{X=n} p_n^{-\alpha} \sum_{\frac{a}{p_n} < k < \frac{b}{p_n}} L_v(x / p_n) x^\alpha \exp(-x) \nonumber \\
		&\sim \Pr{X=n} L_v(1 / p_n) p_n^{-\alpha-1} \int_{a}^b  x^\alpha \exp(-x)\,\mathrm{d}x. \nonumber
	\end{align}
	Fix $\epsilon>0$ sufficiently small so that $\alpha-\epsilon >1$. By~\eqref{eq:qb2} and the Potter bounds \cite{Mikosch1999_EURANDOM_99-013} there exist $C>0$ with 
	\begin{align}
		\label{eq:a2}
		&\sum_{k \notin [\frac{a}{p_n}, \frac{b}{p_n}]} v_k W(\rho_w)^k  \Pr{S_k = n} \\
		&\le c  \Pr{X=n} \sum_{k \notin [\frac{a}{p_n}, \frac{b}{p_n}]} L_v(k) k^{\alpha}  (1 - p_n)^k \nonumber \\
		&\le C \Pr{X=n} p_n^{-\alpha } \sum_{k \notin [\frac{a}{p_n}, \frac{b}{p_n}]}  x^{\alpha} \max(x^\epsilon, x^{-\epsilon}) \exp(-x) \nonumber \\
		&\sim  C \Pr{X=n} L_v(1 / p_n) p_n^{-\alpha-1} \int_{\ndR_{> 0} \setminus ]a,b[}  x^\alpha \max(x^\epsilon, x^{-\epsilon}) \exp(-x)\,\mathrm{d}x. \nonumber
	\end{align}
	The integral converges. As $0<a<b$ were arbitrary, it follows by~\eqref{eq:uequation} that
	\begin{align*}
		u_n \rho_w^n \sim \Pr{X=n} L_v(1 / p_n) p_n^{-\alpha-1} \int_{0}^\infty  x^\alpha \exp(-x)\,\mathrm{d}x.
	\end{align*}
	This completes the proof.
\end{proof}

We are now ready to prove Theorem~\ref{te:main1}.
\begin{proof}
	We have
	\[
	\Prb{N_n=k} = \frac{v_k W(\rho_w)^k  \Pr{S_k = n}}{u_n \rho_w^n}.
	\]
	Writing $k=x / \Pr{X>n}$ with $x$ restricted to a compact subset of $]0,\infty[$, it follows by Proposition~\ref{pro:partition} and identical arguments as in~\eqref{eq:a1}
	\[
	\Prb{N_n=k} \sim \Prb{X>n}x^\alpha \exp(-x) / \Gamma(\alpha +1).
	\]
	This local limit theorem implies
	\[
	N_n \Prb{X>n} \convdis \mathrm{Gamma}(\alpha+1,1).
	\]
	In order to verify convergence of all moments note that for all $q>1$  we have by Proposition~\ref{pro:partition} and identical arguments as in~\eqref{eq:a2} that for some constants $C>0$ and $\epsilon>0$ with $\alpha-\epsilon>-1$
	\begin{align*}
		\sup_{n \ge 1} \Exb{ \left(N_n \Prb{X>n}\right)^q} &\le   \int_0^\infty  x^{\alpha+q} \max(x^\epsilon, x^{-\epsilon}) \exp(-x)\,\mathrm{d}x  < \infty.
	\end{align*}
	Consequently, all moments of $N_n \Prb{X>n}$ converge. This completes the proof.
\end{proof}

\subsection{Condensation in the balls in boxes model}
\label{sec:condensation}

We show that under conditioning $\{S_k=n\}$, the $k-1$ ``small'' summands remaining after deleting the unique maximum become asymptotically independent copies of $(X \mid X \le n)$. To this end we require that $k= k(n)$ doesn't grow too fast. Recall that $\tau$ denotes the 'delete first max' operator.

\begin{theorem}
	\label{te:cond}
	For each $n\ge1$, let $k=k(n)$ be an integer such that
	$
	k L_w(n) \to 0.
	$
	Denote by
	\[
	\mathcal{L}(\tau(X_1,\dots,X_k)\,\bigm\vert\,S_k=n\bigr)
	\quad\text{and}\quad
	\mathcal{L}((X_1,\dots,X_{k-1})\,\bigm\vert\,X_1\le n,\dots,X_{k-1}\le n\bigr)
	\]
	the corresponding conditional laws on $\mathbb{N}_0^{\,k-1}$.  Then, as $n\to\infty$,
	\[
	d_{\mathrm{TV}}\Big(
	\mathcal{L}\big(\tau(X_1,\dots,X_k)\,\bigm\vert\,S_k=n\bigr)\;,\;
	\mathcal{L}\big((X_1,\dots,X_{k-1})\,\bigm\vert\,X_i\le n\ \forall\,i\bigr)
	\Bigr)
	\;\longrightarrow\;0.
	\]
	Consequently, 
	\[
	\left( \max(X_1, \ldots, X_k) \mid S_k = n \right) /n \convp 1.
	\]
	
\end{theorem}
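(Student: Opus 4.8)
The plan is to compute the conditional law of $\tau(X_1,\dots,X_k)$ given $\{S_k=n\}$ \emph{exactly} and to compare it termwise with the truncated product law. Write $p_n=\Prb{X>n}$ and abbreviate $\mu_k:=\mathcal{L}\big(\tau(X_1,\dots,X_k)\mid S_k=n\big)$ and $\nu_k:=\mathcal{L}\big((X_1,\dots,X_{k-1})\mid X_i\le n\ \forall i\big)$, both measures on $\ndN_0^{k-1}$; thus $\nu_k(\vec x)=(1-p_n)^{-(k-1)}\prod_i\Prb{X=x_i}$ for $\vec x=(x_1,\dots,x_{k-1})$ with all coordinates $\le n$. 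From $w_n\sim L_w(n)n^{-1}\rho_w^{-n}$ and \eqref{eq:defX} one has $\Prb{X=n}\sim L_w(n)/(nW(\rho_w))$, so $\Prb{X=\,\cdot\,}$ is regularly varying of index $-1$; Karamata's theorem then gives $\Exb{X\mid X\le n}\sim nL_w(n)/W(\rho_w)$, so that $(k-1)\Exb{X\mid X\le n}=o(n)$ by the hypothesis $kL_w(n)\to0$. Fix $\epsilon_n\to0$ with $kL_w(n)=o(\epsilon_n)$, which is possible precisely because $kL_w(n)\to0$. The key identity is: for $\vec x$ with $s:=\sum_i x_i$ and $m:=\max_i x_i$, if $n-s>m$ then $n-s$ is necessarily the \emph{unique} maximum among $(X_1,\dots,X_k)$ on $\{\tau(X_1,\dots,X_k)=\vec x\}\cap\{S_k=n\}$, the first‑maximum convention is vacuous, and this event decomposes as a disjoint union over the $k$ positions of the deleted coordinate; hence
\[
\mu_k(\vec x)=\frac{k\,\Prb{X=n-s}\prod_i\Prb{X=x_i}}{\Prb{S_k=n}},\qquad\text{so}\qquad\frac{\mu_k(\vec x)}{\nu_k(\vec x)}=\frac{k\,\Prb{X=n-s}(1-p_n)^{k-1}}{\Prb{S_k=n}}.
\]
If $n-s<m$ then $\mu_k(\vec x)=0$; on the tie set $T:=\{\vec x:n-s=m\}$ no exact formula is needed. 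Since $m\le s$, both $\{n-s<m\}$ and $T$ are contained in $\{s\ge n/2\}$.

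For the total variation distance I would use $d_{\mathrm{TV}}(\mu_k,\nu_k)=\sum_{\vec x}\big(\nu_k(\vec x)-\mu_k(\vec x)\big)_+$ and split $\ndN_0^{k-1}$ into three regions. On $\{n-s\le m\}\cup\{s>\epsilon_n n\}$ I bound $(\nu_k-\mu_k)_+\le\nu_k$ and apply Markov's inequality: $\nu_k(s\ge n/2)\le 2(k-1)\Exb{X\mid X\le n}/n=O(kL_w(n))\to0$ and $\nu_k(s>\epsilon_n n)\le(k-1)\Exb{X\mid X\le n}/(\epsilon_n n)=O(kL_w(n)/\epsilon_n)\to0$. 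On the complementary region $\{n-s>m,\ s\le\epsilon_n n\}$ the identity applies and I insert \eqref{eq:qb1} — valid uniformly over $k$ with $kL_w(n)\to0$ — to replace $\Prb{S_k=n}$ by $k\Prb{X=n}(1-p_n)^k$, giving $\mu_k(\vec x)/\nu_k(\vec x)=(1+o(1))\,\Prb{X=n-s}/\big(\Prb{X=n}(1-p_n)\big)$; since $0\le s\le\epsilon_n n$ with $\epsilon_n\to0$, the uniform convergence theorem for regularly varying functions yields $\Prb{X=n-s}/\Prb{X=n}\to1$ uniformly in such $\vec x$, so $\sup\lvert\mu_k(\vec x)/\nu_k(\vec x)-1\rvert=:\delta_n\to0$ on this region and its contribution is at most $\delta_n\,\nu_k(\,\cdot\,)\le\delta_n$. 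Adding the three bounds yields $d_{\mathrm{TV}}(\mu_k,\nu_k)\to0$.

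For the stated consequence, note that on $\{S_k=n\}$ the coordinates of $\tau(X_1,\dots,X_k)$ sum to $n-\max(X_1,\dots,X_k)$; pushing both laws forward under $\vec x\mapsto\sum_i x_i$ — a map under which total variation cannot increase — shows that $\mathcal{L}\big((n-\max_iX_i)/n\mid S_k=n\big)$ is asymptotically close in total variation to $\mathcal{L}\big(n^{-1}\sum_{i=1}^{k-1}\tilde X_i\big)$, where $\tilde X_i$ are i.i.d.\ copies of $(X\mid X\le n)$. Since $\Exb{\sum_{i=1}^{k-1}\tilde X_i}=(k-1)\Exb{X\mid X\le n}=o(n)$, the latter tends to $0$ in probability, hence so does $(n-\max_iX_i)/n$ under $\mathcal{L}(\,\cdot\mid S_k=n)$, which is exactly $\big(\max(X_1,\dots,X_k)\mid S_k=n\big)/n\convp1$.

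I expect the main obstacle to be the combinatorial bookkeeping behind the closed form for $\mu_k$: one has to handle the ``delete first maximum'' convention and especially the tie set $T$ where the maximum fails to be unique. What makes this painless is that $T$ together with $\{n-s<m\}$ lies inside $\{s\ge n/2\}$, a set of negligible $\nu_k$‑mass, so the precise behaviour on $T$ never enters — the comparison is only used on $\{n-s>m\}$, where $\mu_k$ has the clean form above. Beyond \eqref{eq:qb1}, the only analytic inputs are Karamata's theorem and the uniform convergence theorem of the theory of regular variation, both standard; in particular \eqref{eq:qb2} is not needed for this statement.
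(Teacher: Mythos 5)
Your proposal is correct and follows essentially the same route as the paper: compare the exact conditional law of $\tau(X_1,\dots,X_k)$ given $\{S_k=n\}$ termwise with the truncated product law via the identity $k\,\Prb{X=n-s}\prod_i\Prb{X=s_i}/\Prb{S_k=n}$, invoke~\eqref{eq:qb1} and the regular-variation uniformity to get the ratio $\to 1$, and dispose of the remaining mass by Karamata plus Markov. The only difference is bookkeeping — you split off the tie/large-sum region explicitly with $\epsilon_n$ and the $(\nu-\mu)_+$ formula, whereas the paper restricts to totals $s<r_n=o(n)$ where the same exact formula and uniform ratio argument apply — so the two arguments coincide in substance.
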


\begin{proof}
	By Karamata's theorem,
	\begin{align*}
		\Exb{ \sum_{i=1}^{k-1} X_i \,\,\Big\vert\,\, X_1, \ldots, X_{k-1} \le n }	&\sim k \Exb{ X \mid X \le n} \\
		&= \frac{k}{\Prb{X \le n}} \sum_{i=1}^n \Prb{X=i} i \\
		&\sim k n L_w(n) / W(\rho_w)  \\
		&=o(n).
	\end{align*}
	By Markov's inequality and~\eqref{eq:lox}, it follows that there exists a sequence of integers $r_n = o(n)$ such that
	\begin{align}
		\label{eq:on}
		\Prb{ \sum_{i=1}^{k-1} X_i \ge r_n \,\,\Big\vert\,\, X_1,\ldots, X_{k-1} \le n }	\;\longrightarrow\;0.
	\end{align}
	Let
	\[
	\bm{s}=(s_1,\dots,s_{k-1})\in\mathbb{N}^{\,k-1}
	\quad\text{with}\quad
	\sum_{i=1}^{\,k-1}s_i \;=:\;s \;<\;r_n.
	\]
	Define
	\[
	p_\tau(\bm{s})\;:=\;\Prb{\tau(X_1,\dots,X_k)=s\;\mid\;S_k=n}\]
	and \[
	p_{\le n}(\bm{s})\;:=\;\Prb{(X_1,\dots,X_{k-1})=s\;\mid\;X_1, \ldots, X_{k-1} \le n}.
	\]
	By~\eqref{eq:on} it suffices to prove
	\begin{align}
		\label{eq:toverify}
		\sup_{\bm{s}\,:\,s <r_n }\Bigl|1 - \frac{p_\tau(\bm{s})}{p_{\le n}(\bm{s})} \Bigr|\;\xrightarrow[n\to\infty]{}0.
	\end{align}

	Note that, if $\tau(X_1,\dots,X_k)=s$ and $S_k=n$, then the deleted coordinate must equal $n-s$, and that the maximal value can occur at any of the $k$ positions.  Hence, for all sufficiently large $n$ and each  $\bm{s}$ with $\sum s_i=s<r_n$,
	\[
	\Prb{\tau(X_1,\dots,X_k)=\bm{s},\;S_k=n}
	\;=\;
	k\;\Bigl[\prod_{i=1}^{k-1}\Prb{X=s_i}\Bigr]\;\Prb{X=n-s}.
	\]
	Since \(\Prb{S_k=n}\) is the normalising constant under the conditioning $\{S_k=n\}$,
	\[
	p_\tau(\bm{s})
	=\frac{\Prb{\tau(X_1,\dots,X_k)=\bm{s},\;S_k=n}}{\Prb{S_k=n}}
	=\frac{k\,\bigl[\prod_{i=1}^{k-1}\Prb{X=s_i}\bigr]\;\Prb{X=n-s}}{\Prb{S_k=n}}.
	\]
	On the other hand, 
	\[
	p_{\le n}(\bm{s})
	=\frac{\prod_{i=1}^{k-1}\Prb{X=s_i}}{\bigl[\Prb{X\le n}\bigr]^{\,k-1}}.
	\]
	Hence, using~\eqref{eq:qb1} and $s \le r_n = o(n)$
	\begin{align*}
		\frac{p_\tau(\bm{s})}{\,p_{\le n}(\bm{s})\,}
		&=\;
		k\;\frac{\Prb{X=n-s}}{\Prb{S_k=n}}\;\bigl[\Prb{X\le n}\bigr]^{\,k-1} \\
		&\sim \frac{\Prb{X=n-s}}{\Prb{X=n}} \\
		&\sim 1.
	\end{align*}
	This verifies~\eqref{eq:toverify} and hence completes the proof.
\end{proof}

Compare with~\cite{MR2775110} and~\cite{kortchemski2025condensationsubcriticalcauchybienayme} for big-jump principles where the non-maximal components asymptotically no longer depend on $n$.

We now prove Theorem~\ref{te:main2} by viewing the Gibbs partition model as a mixture of the balls in boxes model with the number of Boxes controlled via Theorem~\ref{te:main1}.

\begin{proof}[Proof of Theorem~\ref{te:main2}]
	By~\eqref{eq:lox} there exists a sequence $t_n$ with $\Pr{X>n}^{-1} << t_n << L_w(n)^{-1}$.  By Theorem~\ref{te:cond}, it follows that
	\begin{align}
		\sup_{k \le t_n} d_{\mathrm{TV}}\Big(
		\mathcal{L}\big(\tau(X_1,\dots,X_k)\,\bigm\vert\,S_k=n\bigr)\;,\;
		\mathcal{L}\big((X_1,\dots,X_{k-1})\,\bigm\vert\,X_i\le n\ \forall\,i\bigr)
		\Bigr)
		\;\longrightarrow\;0.
	\end{align}
	Indeed, otherwise there would exist an $\epsilon>0$ and a sequence $k=k(n)$ with $k(n) \le t_n$ such that the total variational distance between the two laws is larger than $\epsilon$ for infinitely many $n$, a clear violation of the statement of Theorem~\ref{te:cond}.
	
	For any integer $k$ with $\Prb{N_n=k}>0$ we have
	\[
	( (K_1, \ldots, K_{N_n}) \mid N_n=k) \eqdist (X_1, \ldots, X_k \mid S_k = n).
	\]
	By Theorem~\ref{te:main1} we have that $N_n < t_n$ with probability tending to $1$ as $n \to \infty$. Hence
	\[
	d_{\mathrm{TV}}\Big(
	\mathcal{L}\big(\tau(K_1,\dots,K_{N_n})\big),
	\mathcal{L}\big(\tilde{X}_1,\dots, \tilde{X}_{N_n-1}\big)
	\Bigr)
	\;\longrightarrow\;0.
	\]
	The sequence $r_n$ in~\eqref{eq:on} may be chosen to be independent of $k$, hence we obtain
	\[
	K_{(1)} / n \convp 1.
	\]	
\end{proof}

\subsection{Characterising further regularity assumptions}
\label{sec:character}

In this section we characterise the regularity assumption~\eqref{eq:regc} by proving Proposition~\ref{pro:cong}.

\begin{proof}[Proof of Proposition~\ref{pro:cong}]
	If~\eqref{eq:regc} holds, then the function $g: ]0,1] \to \ndR_{\ge 0}, t \mapsto \lim_{n \to  \infty}\frac{\Prb{X>n^t}}{\Prb{X>n}}$ decreases monotonically, $g(1)=1$, and $g(st) = g(s)g(t)$ for all $0<t,s \le 1$. We may extend $g$ to a multiplicative function  $g: ]0,\infty[ \to ]0,\infty[$ by setting $g(t) = g(1/t)$ for $t>1$. This way,~\cite[Thm. 1.1.9]{zbMATH00043570} applies, yielding $g(t) \equiv t^{-\gamma}$ for some $\gamma \in \ndR$. Since $g$ decreases monotonically, we have $\gamma \ge 0$.  
	
	Is is easy to see that~\eqref{eq:regc} implies the same limit when $n$ grows along all real numbers (as opposed to the integers). This implies that $\Pr{X> \exp(y)}$ varies regularly at infinity with index $-\gamma$. If $L_w$ is eventually monotone, then \begin{align*}
		\Prb{X>\exp(y)} &\sim \sum_{k > \exp(y)} L_w(k) k^{-1} / W(\rho_w) \\
		&\sim \int_{\exp(y)}^\infty L_w(x) x^{-1} / W(\rho_w)\,\mathrm{d}x \\
		&= \int_{y}^\infty L_w(\exp(z))  / W(\rho_w)\,\mathrm{d}z
	\end{align*}
	as $y \to \infty$.
	By the monotone density theorem,
	\begin{align*}
		L_w(\exp(y)) &\sim W(\rho_w) \Prb{X>\exp(y)} \frac{\gamma}{y}
	\end{align*}
	as $y \to \infty$. Hence $L_w(\exp(y))$ varies regularly at infinity with index $-\gamma -1$. In other words, there exists a slowly varying function $L$ with
	\[
	L_w(y) \sim \frac{L(\log y)}{\log^{1+\gamma}(y)}
	\]
	as $y \to \infty$.
	
	Conversely, if $L_w(y) \sim \frac{L(\log y)}{\log^{1+\gamma}(y)}$ as $y \to \infty$ for some $\gamma>0$ and some slowly varying function $L$, then by an elementary calculation and Karamata's theorem
	\begin{align*}
		\Prb{X>n} &\sim W(\rho_w)^{-1} \int_{n}^\infty \frac{L(\log y)}{y \log^{1+\gamma}(y)}\,\mathrm{d}y \\
		&= W(\rho_w)^{-1} \int_{\log n}^\infty \frac{L(z)}{ z^{1+\gamma}}\,\mathrm{d}y \\
		&\sim  W(\rho_w)^{-1} \frac{L(\log n)}{\gamma  \log^{\gamma}{n}}.
	\end{align*}
	This implies
	\[
	\lim_{n \to \infty} \frac{\Prb{X>n^t}}{\Prb{X>n}} = t^{-\gamma}.
	\]
	Hence the proof is complete.
\end{proof}

\subsection{Non-maximal components in the balls in boxes model}
\label{sec:approx}

In the previous section we showed that the balls in boxes model exhibits a unique giant component with size $n$ up to higher order asymptotics that scale at a slower rate. The next theorem determines these higher order asymptotics and the rate of all maximal components.  We let $\delta_{(\cdot)}$ denote the Dirac measure.

\begin{theorem}
	Let $k_n$ denote a sequence  of positive integers with $k_n \Prb{X>n} \to x>0$. With \[
	(\widehat{X}_1, \ldots, \widehat{X}_{k_n-1}) := \left(\tau(X_1, \ldots, X_{k_n}) \mid S_{k_n} = n\right),
	\]
	the  point process \[
	\sum_{i = 1}^{k_n -1}\delta_\frac{\log(\widehat{X}_i)}{\log(n)}\one_{\widehat{X}_i>1}
	\]
	converges towards an inhomogeneous Poisson point process on $]0,1]$ with intensity measure $x\Lambda$.
\end{theorem}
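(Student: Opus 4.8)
The plan is to derive this from Theorem~\ref{te:cond} together with the classical Poisson-limit criterion for row-wise independent point processes. First I would note that the hypothesis $k_n\Prb{X>n}\to x>0$, combined with~\eqref{eq:lox}, forces $k_nL_w(n)\to 0$, so Theorem~\ref{te:cond} applies and shows that the conditional law $\mathcal{L}\big(\tau(X_1,\dots,X_{k_n})\mid S_{k_n}=n\big)$ lies within vanishing total variation distance of the law of a vector $(X_1',\dots,X_{k_n-1}')$ of independent copies of $(X\mid X\le n)$. Since total variation distance is non-increasing under the measurable map $(y_1,\dots,y_{k_n-1})\mapsto\sum_i\delta_{\log(y_i)/\log(n)}\one_{y_i>1}$, the point process in the statement has the same distributional limit (if it exists) as
\[
\Xi_n:=\sum_{i=1}^{k_n-1}\delta_{\frac{\log(X_i')}{\log(n)}}\one_{X_i'>1}.
\]
Hence it suffices to prove that $\Xi_n$ converges towards the inhomogeneous Poisson point process on $]0,1]$ with intensity measure $x\Lambda$.

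Next I would invoke the classical criterion: the empirical point process of a row-wise independent array on a locally compact second countable Hausdorff space converges in distribution to a Poisson point process with prescribed Radon intensity $\nu$ if and only if the number of rows times the law of a single point converges vaguely to $\nu$. Here the state space is $]0,1]$, on which $x\Lambda$ is Radon since $x\Lambda([a,1])=x(a^{-\gamma}-1)<\infty$ for every $a\in]0,1]$; thus the limit object is well defined, with infinitely many points accumulating at $0$ when $\gamma>0$, and the empty process when $\gamma=0$. Since $\Lambda$ is atomless and the half-open intervals $]a,b]$ with $0<a<b\le1$ form a convergence-determining $\pi$-system of relatively compact subsets of $]0,1]$, the criterion reduces to checking that
\[
(k_n-1)\,\Prb{a<\tfrac{\log(X_1')}{\log(n)}\le b}\;\longrightarrow\;x\big(a^{-\gamma}-b^{-\gamma}\big)=x\Lambda(]a,b])
\]
for all such $a,b$.

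This last computation should be routine. With $X_1'\eqdist(X\mid X\le n)$ and $X_1'\le n$, the event equals $\{\lfloor n^a\rfloor<X_1'\le\lfloor n^b\rfloor\}$ (the upper constraint being vacuous when $b=1$); since $X$ is integer valued one has $\Prb{X>\lfloor n^a\rfloor}=\Prb{X>n^a}$ and $\Prb{X>\lfloor n^b\rfloor}=\Prb{X>n^b}$, and the defect probability $\Prb{X>n}$ introduced by the conditioning cancels between the two endpoints:
\[
\Prb{\lfloor n^a\rfloor<X_1'\le\lfloor n^b\rfloor}=\frac{\Prb{X>n^a}-\Prb{X>n^b}}{\Prb{X\le n}}.
\]
Since $\Prb{X\le n}\to1$ and $k_n\Prb{X>n}\to x$, multiplying by $k_n-1$ gives an expression asymptotic to $k_n\Prb{X>n}\big(\tfrac{\Prb{X>n^a}}{\Prb{X>n}}-\tfrac{\Prb{X>n^b}}{\Prb{X>n}}\big)$, which tends to $x(a^{-\gamma}-b^{-\gamma})$ by~\eqref{eq:regc1} of Proposition~\ref{pro:cong} (and trivially with ratio $1$ at the endpoint $b=1$); this is $x\Lambda(]a,b])$, as required.

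The genuine obstacle is conceptual rather than technical: one must make sure that conditioning on $X\le n$ does not perturb the macroscopic scaling. The cancellation displayed above is exactly the point — the missing mass $\Prb{X>n}$ enters additively at both endpoints of every interval and drops out — so the limiting intensity is $\Lambda$ itself rather than a translate of it, which is precisely why the stronger regularity assumption~\eqref{eq:regc}, rather than mere slow variation of $\Prb{X>n}$, is the natural hypothesis here. Everything else is bookkeeping: the floor/ceiling identifications, the behaviour at the right endpoint, transferring the total variation estimate of Theorem~\ref{te:cond} to the level of point measures, and quoting the Poisson limit theorem in the generality of Radon (not necessarily finite) intensities.
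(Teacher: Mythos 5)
Your proof is correct and follows essentially the same route as the paper: both reduce via Theorem~\ref{te:cond} (and the fact that total variation distance cannot increase under a measurable map) to the point process built from i.i.d.\ copies of $(X\mid X\le n)$, and both then close with a standard Poisson-limit criterion for i.i.d.\ triangular arrays on the locally compact space $]0,1]$. The paper finishes by computing the Laplace functional and invoking the Laplace-functional characterization of Poisson convergence, whereas you verify vague convergence of the expected measure $(k_n-1)\,\mathcal{L}\bigl(\log X_1'/\log n\bigr)$ to $x\Lambda$ on the $\pi$-system of intervals $]a,b]$; these are equivalent formulations of the same criterion, so the substance matches.
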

\begin{proof}
	Let $\tilde{X}_i$, $i \ge 1$ denote independent copies of $(X \mid X \le n)$. By Theorem~\ref{te:cond} it suffices to show convergence of the point process
	\begin{align}
		\label{eq:modproc}
		\xi_n := \sum_{i = 1}^{k_n -1}\delta_\frac{\log(\tilde{X}_i)}{\log(n)}\one_{\tilde{X}_i>1}.
	\end{align}
	Let $\psi: ]0,1] \to \ndR_{\ge 0}$ be a  continuous function with compact support.  We compute the Laplace functional
	\begin{align}
		\label{eq:lapl}
		\Exb{\exp\left(- \int \psi(t) \,\mathrm{d}\xi_n \right) } &= \Exb{\exp\left(- \sum_{i=1}^{k_n-1} \psi\left( \frac{\log(\tilde{X}_i)}{\log n}\right)\one_{\tilde{X}_i>1} \right)} \\
		&= \Exb{ \exp\left(- \psi\left(\frac{\log(\tilde{X}_1)}{\log n}\right)\one_{\tilde{X}_1>1} \right) }^{k_n -1}. \nonumber
	\end{align}
	Using $	\frac{\Prb{X>n^t}}{\Prb{X>n}}\sim g(t)$ for all $t \in ]0,1]$, it follows for $0<a<b\le1$ that
	\begin{align*}
		\Prb{a \le \frac{\log(\tilde{X}_1)}{\log n} \le b } &\sim \Prb{n^a \le \tilde{X}_1 \le n^b }  \\
		&\sim \Prb{\tilde{X}_1 \ge n^a} - \Prb{\tilde{X}_1 > n^b} \\
		&\sim \Prb{X>n} \Lambda([a,b]).
	\end{align*}
	Let $\nu_n$ denote the restriction of $\Prb{X>n}^{-1} \mathcal{L}\left( \frac{\log(\tilde{X}_1)}{\log n} \one_{\tilde{X}_1>0} \right)$ (a measure on $[0,1]$) to a measure on $]0,1]$. Then $\nu_n$ converges in the vague topology towards $\Lambda$ and
	\begin{align*}
		&\Prb{X>n}^{-1}  \Exb{ 1 - \exp\left(-  \psi\left(\frac{\log(\tilde{X}_1)}{\log n} \right)\one_{\tilde{X}_1>1}\right) } \\
		&= \int_{]0,1]}(1 - \exp(-\psi(t))) \,\nu_n(\mathrm{d}t) \\
		&\to  \int_{]0,1]}(1 - \exp(-\psi(t))) \,\Lambda(\mathrm{d}t).
	\end{align*}
	Using $k_n \sim x / \Prb{X>n}$ it follows by~\eqref{eq:lapl} that
	\begin{align}
		\label{eq:conv}
		\Exb{\exp\left(- \int \psi(t) \,\mathrm{d}\xi_n \right) } \to \exp\left( -x \int_{]0,1]}(1 - \exp(-\psi(t))) \,\Lambda(\mathrm{d}t)  \right).
	\end{align}
	By~\cite[Lem. 3.1, Thm. 4.11]{zbMATH06684669} it follows that $\xi_n$ converges in distribution with respect to the vague topology towards a Poisson process with intensity measure $x\Lambda$. 
\end{proof}

We now prove Theorem~\ref{te:main3} again by modelling  Gibbs partitions as a mixture of the balls in boxes model with the box number  controlled via Theorem~\ref{te:main1}.

\begin{proof}[Proof of Theorem~\ref{te:main3}]
	By Theorem~\ref{te:main2} it suffices to show convergence of the point process
	\[
	\Xi_n := \sum_{i=1}^{N_n-1} \delta_\frac{\log(\bar{X}_i)}{\log(n)}\one_{\bar{X}_i>1}.
	\]
	Note that when we condition on $N_n =k$, then this is distributed like the point process $\xi_n$ defined in~\eqref{eq:modproc}.

	Let $\epsilon>0$ be given. By Theorem~\ref{te:main1} we may choose $0<a<b$ sufficiently large such that $N_n \Prb{X>n} \in [a,b]$ with probability larger than $1-\epsilon$ for all sufficiently large~$n$.
	
	For each $k$ in the interval $\Prb{X>n}^{-1}[a,b]$ we write $k= x_k / \Prb{X>n}$ so that $x_k \in [a,b]$. Let $\xi: ]0,1] \to \ndR_{\ge0}$ denote a bounded continuous function with compact support. By~\eqref{eq:conv} we have
	\[
	\Exb{\exp\left(- \int \psi(t) \,\mathrm{d}\Xi_n \right) \,\Big\vert\, N_n = k} \sim \exp\left( -x_k \int_{]0,1]}(1 - \exp(-\psi(t))) \,\Lambda(\mathrm{d}t)  \right)
	\]
	and it is easy to see from the proof of~\eqref{eq:conv} that this asymptotic equivalence is uniform in $k$ for $x_k$ restricted to $[a,b]$. It follows from Theorem~\ref{te:main1} that for some error term $R_n$ satisfying $|R_n|<\epsilon$ for large enough $n$ we have
	\begin{align*}
		&\Exb{\exp\left(- \int \psi(t) \,\mathrm{d}\Xi_n \right)} \\
		&= R_n + o(1) + \sum_{k : x_k \in[a,b]}\Prb{X>n} \frac{ x_k^\alpha \exp(-x_k)}{  \Gamma(\alpha +1)}  \exp\left( -x_k \int_{]0,1]}(1 - \exp(-\psi(t))) \,\Lambda(\mathrm{d}t)  \right) \\
		&=  R_n + o(1) + \Exb{  \exp\left( -Z \int_{]0,1]}(1 - \exp(-\psi(t))) \,\Lambda(\mathrm{d}t)  \right), a \le Z \le b } \\
		&=  2R_n + o(1) + \Exb{  \exp\left( -Z \int_{]0,1]}(1 - \exp(-\psi(t))) \,\Lambda(\mathrm{d}t)  \right)}.
	\end{align*}
	Since $\epsilon>0$ was arbitrary it follows that
	\begin{align*}
		\Exb{\exp\left(- \int \psi(t) \,\mathrm{d}\Xi_n \right)} \to \Exb{  \exp\left( -Z \int_{]0,1]}(1 - \exp(-\psi(t))) \,\Lambda(\mathrm{d}t)  \right)}.
	\end{align*}
	By~\cite[Lem. 3.1, Thm. 4.11]{zbMATH06684669} the proof is complete.
\end{proof}

\section{Applications\label{sec:Apps}}
In the following we present applications of our main results, Theorems~\ref{te:main1},~\ref{te:main2} and~\ref{te:main3}, to many different combinatorial models. 
For all models, we first describe the counting problem in terms of a composition scheme. Then, 
we use singularity analysis of generating functions~\cite{FlaOd1990,FlaSed} to derive
the asymptotics of the component weights~\eqref{eq:condw}, as well as the component-count weights~\eqref{eq:condv}. This allows to ensure the applicability of our main results by checking our standing assumptions and also the growth condition~\eqref{eq:regc2}.

\subsection{A random walk in the cube}
\label{sec:randomwalkinthecube}
We study a simple random walk in the cube with unit steps $\myvec{-1\\0\\0}, \myvec{0\\-1\\0},\myvec{0\\0\\-1}$. The starting point is the upper corner $(n,n,n)$ and the walk ends at the origin $(0,0,0)$. 

Our main interest is the number of times the random walk returns to the space diagonal $x=y=z$ of the cube. This random walk generalizes the classical Dyck paths\footnote{We note in passing that, strictly speaking, such lattice paths are usually known as Dyck bridges and refer the interested reader to the fundamental work of Banderier and Flajolet~\cite{BanderierFlajolet2002}.} from $(n,n)$ to $(0,0)$. There, the expected number of returns to the diagonal of the square $x=y$ is proportional to $\sqrt{n}$. In our case, dimension three, we will observe a growth order of $\log(n)$. Let $\mathcal{P}$ denote the paths from the corner $(n,n,n)$, $n\in\ndN_0$ to the origin with unit steps, staying inside the cube. We are interested in the number of returns to the diagonal for each such path. Let $a_{n,j}$, $j\ge 1$ denote the number of paths of length $3n$ from $(n,n,n)$ to the origin with exactly $j$ returns to the diagonal. Moreover, let $a_n$ denote the total number of such paths, 
\[
a_n=\sum_{j\ge 1}a_{n,j}=\binom{3n}{n,n,n},
\]
where the total number follows directly from a standard counting argument.
Let $N_n$ the random variable, counting the returns to the diagonal under the uniform distribution:
\[
\Prb{N_n=j}=\frac{a_{n,j}}{a_n},\quad j\ge 1.
\]
We remark that $(a_n)$ is the sequence \href{https://oeis.org/A006480}{OEIS A006480}. 
%1, 6, 90, 1680, 34650, 756756
Let $\rt$ denote the corresponding parameter, $\rt\colon \mathcal{P}\to\ndN$.
The generating function 
\begin{equation}
	\label{def:bvGF}
	A(z,q)=\sum_{p\in\mathcal{P}}z^{|p|/3}q^{\rt(p)}=\sum_{n\ge 0}z^n\sum_{j\ge 1}a_{n,j}q^j=\sum_{n\ge 0}a_n\Exb{q^{N_n}}z^n,
\end{equation}
will be obtained using the so-called arch decomposition of paths. Here, $|p|$ denotes the length of the path $p\in\mathcal{P}$.
We can readily set a recurrence relation for these numbers by considering their first return to the diagonal. For $j>1$
\begin{equation}
	\label{eqn:recDiag1}
	a_{n,j}=\sum_{k=1}^{n-1}a_{k,j-1}a_{n-k,1},\quad m>0,
\end{equation}
with initial conditions
\begin{equation}
	\label{eqn:recDiagInitial}
	a_{0,j}=
	\begin{cases}
		1,\quad j=0,\\
		0,\quad j>0.
	\end{cases}
\end{equation}
For $j=1$ we use a different kind of recurrence relation. We observe that the total number of paths
$a_n=\binom{3n}{n,n,n}$ from $(n,n,n)$ to the origin can be decomposed according to the first return to the diagonal:
\begin{equation}
	\label{eqn:recDiag2}
	a_{n}=\sum_{k=1}^{n-1}a_{k}a_{n-k,1},\quad n>1.
\end{equation}
Let $P(z)$ denote the generating function of all paths, given by
\[
P(z)=A(z,1)=\sum_{n\ge 0}a_n z^n=\sum_{n\ge 0}\binom{3n}{n,n,n}z^n. 
\]
We point out that $P(z)$ can be represented in terms of an Euler-Gau\ss{} hypergeometric function
\begin{equation}
	\label{def:hypergeom}
	{}_2F_1(a,b;c;z) = \sum_{n=0}^\infty \frac{\auffak{a}n \auffak{b}n}{\auffak{c}n} \frac{z^n}{n!},
\end{equation}
where $\auffak{x}n=x(x+1)\dots (x+(n-1))$ denotes the $n$th rising factorial\footnote{A different often used notation is the so-called Pochhammer symbol $(x)_n=\auffak{x}{n}$}, 
as 
\begin{equation}
	\label{P:hypergeom}
	P(z)=\sum_{n\ge 0}\binom{3n}{n,n,n}z^n={}_2F_1\big(\frac13,\frac23;1;27z\big),
\end{equation}
with radius of convergence $\rho=\frac1{27}$, which follows directly by the ratio test. This representation will be of particular interest in our subsequent asymptotic and probabilistic considerations.
%Alternatively, there are representations as the Hadamard product of the generating function of the sequences $\binom{2n}n$ and $\binom{3n}{n}$, which both have closed form expressions:
%\[
%\sum_{m\ge 0}\binom{3m}{m}z^m=\frac{2\cos\Big(\frac13\arcsin\big(\frac{3\sqrt{3z}}{2}\big)\Big)}{\sqrt{4-27z}},
%\quad \sum_{m\ge 0}\binom{2m}{m}z^m=\frac1{(1-4z)^2}.
%\] 
Let 
\[
A_j(z)=\sum_{n=j}^{\infty}a_{n,j}z^{n},\quad j\ge 1.
\]
The recurrence relation~\eqref{eqn:recDiag2} leads to
\begin{equation}
	\label{eqn:recDiag3}
	P(z)-1=A_1(z)P(z),\quad A_1(z)=1-\frac{1}{P(z)}.
\end{equation}
From~\eqref{eqn:recDiag1} we obtain
\[
A_j(z)=A_{j-1}(z)A_1(z),
\]
and by iteration
\[
A_j(z)=A_1(z)^j,\quad j\ge 1,
\]
also valid for $j=0$, by means of~\eqref{eqn:recDiagInitial}. Note that the formula above also leads to~\eqref{eqn:recDiag3},
as we have
\[
a_n=\sum_{j\ge 0}a_{n,j},\quad \sum_{j\ge 0}A_j(z)=\frac{1}{1-A_1(z)}=P(z).
\]
Finally, we combine the previous computations to obtain the desired bivariate generating function~\eqref{def:bvGF}.
\begin{lemma}
	\label{the:DiagToDiag}
	The bivariate generating function $A(z,q)=\sum_{n\ge 0}\sum_{j\ge 0}a_{n,j}q^j z^n$ 
	of the number of paths of length $3n$ from $(n,n,n)$ to the origin with exactly $j$ returns to the diagonal is given by
	\[
	A(z,q)=\frac{1}{1-q A_1(z)},
	\]
	with $A_1(z)= 1-\frac{1}{P(z)}$ and $P(z)={}_2F_1\big(\frac13,\frac23;1;27z\big)$. 
\end{lemma}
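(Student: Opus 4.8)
The plan is to assemble the statement directly from the arch-decomposition identities already established above. First I would recall that iterating the first-return recurrence~\eqref{eqn:recDiag1}, together with the initial condition~\eqref{eqn:recDiagInitial}, yields $A_j(z) = A_1(z)^j$ for every $j \ge 0$ — this is exactly the telescoping carried out in the text just before the lemma. Then, by the very definition of the bivariate generating function,
\[
A(z,q) = \sum_{n \ge 0}\sum_{j \ge 0} a_{n,j}\, q^j z^n = \sum_{j \ge 0} q^j A_j(z) = \sum_{j \ge 0}\bigl(q\,A_1(z)\bigr)^j.
\]
Since the shortest path that returns to the diagonal has length $3$, the series $A_1(z)$ has no constant term, i.e.\ $A_1(z) = O(z)$; hence $q\,A_1(z)$ has positive valuation in $z$ and the geometric series above converges as a formal power series (and analytically wherever $|q\,A_1(z)| < 1$), giving $A(z,q) = \dfrac{1}{1 - q\,A_1(z)}$.

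It then remains only to substitute the explicit form of $A_1(z)$. Relation~\eqref{eqn:recDiag3}, obtained by decomposing a path according to its first return to the diagonal, gives $A_1(z) = 1 - 1/P(z)$, and the Euler--Gauss evaluation~\eqref{P:hypergeom} identifies $P(z) = {}_2F_1\bigl(\tfrac13,\tfrac23;1;27z\bigr)$. Plugging these in finishes the argument.

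I do not expect a genuine obstacle here: all the substance lies in the earlier derivations of $A_j(z) = A_1(z)^j$ and of~\eqref{eqn:recDiag3}. The single point worth a line of justification is the legitimacy of summing the geometric series in $q$, which is settled by the remark that $A_1$ vanishes at $z = 0$; once that is in place, the lemma is immediate.
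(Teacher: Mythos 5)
Your proposal is correct and follows essentially the same route as the paper: iterate the first-return recurrence to obtain $A_j(z)=A_1(z)^j$, sum the resulting geometric series in $q$, and then substitute $A_1(z)=1-1/P(z)$ from the decomposition~\eqref{eqn:recDiag3} together with the hypergeometric evaluation~\eqref{P:hypergeom} of $P(z)$. The brief remark justifying formal convergence of the geometric series via $A_1(0)=0$ is a small, harmless addition that the paper leaves implicit.
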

\begin{remark}
	We point out that this construction is actually true for all dimensions $m\ge 2$. 
	For $m\ge 4$ there are only a finite number of returns, as $n$ tends to infinity,
	which follows from classical results~\cite{FlaSed}.
\end{remark}
The bivariate generating function $A(z,q)$ can be interpreted in terms of symbolic combinatorics.
The set of all paths $\mathcal{P}$ in the cube from an upper corner to the origin can be obtained as a sequence $\Seq$ of arches $\mathcal{A}$, corresponding to the returns to the diagonal. In addition, the variable $q$ indicates the number of arches. This leads to the symbolic equation $\mathcal{P}=\Seq(q\mathcal{A})$. We collect an enumerative result below.
\begin{corollary}
	\label{coroll:noVisits}
	The sequence $(a_{n,1})_{n\ge 1}$ counts the number of paths of length $3n$ from $(n,n,n)$ to the origin, 
	never returning to the diagonal $x=y=z$ before the origin, starts with
	\[
	1,6,54,816,14814, 295812,6262488,137929392,3125822238,\dots
	\]
	and is the sequence \href{https://oeis.org/A378026}{OEIS A378026}.
	It satisfies the asymptotics
	\[
	a_{n,1}\sim \frac{2\pi}{\sqrt{3}}\cdot \frac{27^n}{n \log^2n},
	\]
	as $n\to\infty$.
\end{corollary}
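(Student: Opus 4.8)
\textbf{Proof proposal for Corollary~\ref{coroll:noVisits}.}

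The plan is to extract the asymptotics of $a_{n,1} = [z^n] A_1(z)$ from the relation $A_1(z) = 1 - 1/P(z)$ via singularity analysis of $P(z) = {}_2F_1(\tfrac13,\tfrac23;1;27z)$ near its dominant singularity $z = \rho = \tfrac1{27}$. The key observation is that $P$ has a \emph{logarithmic} singularity at $z = \rho$: the hypergeometric function ${}_2F_1(a,b;c;z)$ with $c = a+b$ (here $a = \tfrac13$, $b = \tfrac23$, $c = 1$, so indeed $c = a+b$) is the classical confluent case whose behaviour near $z = 1$ is governed by a $\log$ singularity. Concretely, the standard connection formula (see, e.g., \cite{FlaSed}, or Abramowitz--Stegun 15.3.10) gives, as $z \to \rho^-$,
\[
	P(z) = {}_2F_1\Big(\tfrac13,\tfrac23;1;27z\Big) \sim \frac{-1}{B(\tfrac13,\tfrac23)}\log(1 - 27z) = \frac{\sqrt{3}}{2\pi}\,\log\frac{1}{1-27z},
\]
using $B(\tfrac13,\tfrac23) = \Gamma(\tfrac13)\Gamma(\tfrac23) = \pi/\sin(\pi/3) = 2\pi/\sqrt3$. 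In particular $P(z) \to \infty$ as $z \to \rho$, so $1/P(z) \to 0$, and hence
\[
	A_1(z) = 1 - \frac{1}{P(z)} \sim 1 - \frac{2\pi/\sqrt3}{\log\frac{1}{1-27z}}
\]
near the singularity.

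Next I would apply transfer theorems for functions with logarithmic singularities. The function $1/\log\frac{1}{1-27z}$ has, after the substitution $z \mapsto z/27$, coefficient asymptotics of the form $[z^n]\, \big(\log\frac{1}{1-z}\big)^{-1} \sim \frac{1}{n \log^2 n}$; this is the standard estimate (\cite{FlaSed}, Theorem VI.2 and the examples on functions of $\log$, cf. the table of singularity types). Multiplying by $27^n$ for the rescaling and by the constant $2\pi/\sqrt3$ from the expansion of $A_1$ yields
\[
	a_{n,1} = [z^n] A_1(z) \sim \frac{2\pi}{\sqrt3}\cdot\frac{27^n}{n \log^2 n}.
\]
To make this rigorous one needs the error term in the ${}_2F_1$ expansion — i.e. $P(z) = \frac{\sqrt3}{2\pi}\log\frac1{1-27z} + O(1)$ uniformly in a $\Delta$-domain at $\rho$ — which then transfers to $1/P(z) = \frac{2\pi}{\sqrt3}(\log\frac1{1-27z})^{-1}(1 + o(1))$ with the $o(1)$ controlled in the domain, so that the singularity-analysis transfer applies to $A_1$ directly. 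The analytic continuation of $P$ to such a $\Delta$-domain is automatic since ${}_2F_1$ is analytic on $\mathbb{C}\setminus[\rho,\infty)$.

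The enumerative part (the initial terms $1, 6, 54, 816, \dots$ and the OEIS identification A378026) is a direct computation: expand $P(z) = \sum_n \binom{3n}{n,n,n} z^n = 1 + 3z + 15z^2 + \dots$, invert the power series to get $1/P(z)$, and read off $A_1(z) = 1 - 1/P(z)$; alternatively use the convolution recurrence~\eqref{eqn:recDiag2}, $a_n = \sum_{k=1}^{n-1} a_k\, a_{n-k,1}$ for $n > 1$ with $a_{n} = \binom{3n}{n,n,n}$, to solve for $a_{n,1}$ iteratively. The main obstacle is pinning down the constant and the precise $\log^{-2}$ order: one must be careful that it is the \emph{confluent} case $c = a+b$ that produces the $\log$ (rather than an algebraic singularity $(1-27z)^{c-a-b}$), get the multiplicative constant $B(a,b)^{-1}$ right, and invoke the correct transfer theorem for $(\log\frac1{1-z})^{-1}$ — everything else is routine singularity analysis and bookkeeping of the factor $27^n$.
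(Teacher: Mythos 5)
Your proposal reaches the same endpoint by the same overall strategy (singularity analysis of $A_1(z)=1-1/P(z)$ around the logarithmic singularity of $P$ at $\rho=1/27$, followed by a transfer theorem), but you derive the singular expansion of $P$ differently. The paper goes ``backward'': it uses Stirling's formula to get $\binom{3n}{n,n,n}\sim\frac{\sqrt3}{2\pi}\frac{27^n}{n}$ and then matches this coefficient asymptotic to the singular shape $\frac{\sqrt3}{2\pi}\log\frac{1}{1-z/\rho}$ (relying on the $\Delta$-continuability of ${}_2F_1$, which the paper asserts). You instead invoke the classical connection formula for the logarithmic case $c=a+b$ of Gauss's hypergeometric function to obtain $P(z)\sim\frac{1}{B(1/3,2/3)}\log\frac{1}{1-27z}$ directly, with the Beta value $B(1/3,2/3)=2\pi/\sqrt3$ from the reflection formula. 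Your route is arguably cleaner for making the $\Delta$-domain estimate rigorous, since the connection formula gives the local expansion (including the constant-order correction) uniformly; the paper's Stirling-based matching is shorter but implicitly leans on the same structural facts. One small slip worth fixing in your write-up: the transfer estimate is $[z^n](\log\frac{1}{1-z})^{-1}\sim-\frac{1}{n\log^2 n}$, with a minus sign; in your version this cancels against the minus sign you simultaneously dropped from $A_1=1-\frac{2\pi/\sqrt3}{\log(1/(1-27z))}$, so your final answer $a_{n,1}\sim\frac{2\pi}{\sqrt3}\frac{27^n}{n\log^2 n}$ is correct, but the intermediate bookkeeping as stated is not.
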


In order to prove Corollary~\ref{coroll:noVisits} we use singularity analysis of generating functions~\cite{FlaSed,FlaOd1990} and determine the singular structure of $A_1(z)$, as well as $P(z)$. This allows to obtain the asymptotic expansions of $A_1(z)$. Moreover, these expansions allow to apply our main results, Theorems~\ref{te:main1},~\ref{te:main2} and~\ref{te:main3}
as condition~\eqref{eq:regc2} of Proposition~\ref{pro:cong} is readily verified with $\gamma=1$. 
\begin{lemma}
	\label{lem:SingularPA}
	The functions $P(z)=\sum_{n\ge 0}\binom{3n}{n,n,n}z^n$ and $A_1(z)$ both have the dominant singularity $\rho=\frac1{27}$. They satisfy for $z$ near $\rho$ the singular expansions
	\[
	P(z)\sim \frac{\sqrt{3}}{2\pi}\cdot \log\left(\frac{1}{1-\frac{z}\rho}\right),\quad A_1(z)\sim 1-\frac{1}{\frac{\sqrt{3}}{2\pi}\cdot \log\left(\frac{1}{1-\frac{z}\rho}\right)}.
	\]
\end{lemma}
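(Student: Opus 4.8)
The plan is to identify the dominant singularity of $P(z) = {}_2F_1\bigl(\tfrac13,\tfrac23;1;27z\bigr)$ and its singular behaviour there, then transfer this to $A_1(z) = 1 - 1/P(z)$ by elementary manipulation. First I would recall the classical connection formula for the Gauss hypergeometric function in the logarithmic case $c = a+b$: since $\tfrac13 + \tfrac23 = 1 = c$, the function ${}_2F_1(a,b;a+b;u)$ has a logarithmic singularity at $u = 1$, with the standard expansion (see e.g.\ \cite{FlaSed}, or Abramowitz--Stegun 15.3.10)
\[
{}_2F_1(a,b;a+b;u) \sim -\frac{\Gamma(a+b)}{\Gamma(a)\Gamma(b)} \log(1-u)
\]
as $u \to 1^-$. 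With $a = \tfrac13$, $b = \tfrac23$, the reflection formula $\Gamma(\tfrac13)\Gamma(\tfrac23) = \pi/\sin(\pi/3) = 2\pi/\sqrt{3}$ and $\Gamma(1) = 1$ give the constant $\tfrac{\sqrt3}{2\pi}$. Substituting $u = 27 z$ so that $u \to 1$ corresponds to $z \to \rho = \tfrac{1}{27}$, we obtain
\[
P(z) \sim \frac{\sqrt 3}{2\pi}\,\log\!\left(\frac{1}{1 - z/\rho}\right)
\]
as $z \to \rho$. That $\rho = \tfrac{1}{27}$ is the radius of convergence, and the unique dominant singularity on the circle $|z| = \rho$, follows from the ratio test already noted in~\eqref{P:hypergeom} together with positivity of the coefficients $\binom{3n}{n,n,n} > 0$ (Pringsheim), so no competing singularities arise.

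For $A_1(z)$, since $P(z) \to \infty$ as $z \to \rho$ with $P$ holomorphic and nonvanishing in a slit neighbourhood of $\rho$ (indeed $P(z) \ge P(0) = 1 > 0$ on $[0,\rho)$ and $P(z) \to \infty$, so $1/P(z)$ is well-defined and tends to $0$), we get directly
\[
A_1(z) = 1 - \frac{1}{P(z)} \sim 1 - \frac{1}{\tfrac{\sqrt3}{2\pi}\log\!\left(\tfrac{1}{1-z/\rho}\right)},
\]
and $A_1$ inherits the same dominant singularity $\rho$ with no others on $|z|=\rho$, because $P$ does not vanish there (on the boundary circle away from $\rho$, $P$ is analytic and $A_1 = 1 - 1/P$ is analytic wherever $P$ is and $P\neq 0$; and the behaviour near $\rho$ is governed by the divergence of $P$). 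This completes the proof.

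I do not expect a serious obstacle here; the only point requiring a little care is citing the correct form of the logarithmic connection formula for ${}_2F_1$ in the resonant case $c = a+b$ and pinning down the constant via the reflection formula — a standard but easy-to-misremember computation. A secondary minor point is to argue cleanly that $1/P(z)$ is analytic and small in a $\Delta$-domain at $\rho$, which is immediate from $P(z) \to \infty$ and the fact that $P$ is zero-free near $\rho$ (its Taylor coefficients at $0$ are positive, so $P(x) \ge 1$ for $x \in [0,\rho)$, and analyticity of $1/P$ extends to a slit neighbourhood since $P$ has no zeros there).
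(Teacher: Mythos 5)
Your proof is correct, and it takes a genuinely different route from the paper's. The paper derives the asymptotics of the coefficients $\binom{3n}{n,n,n} \sim \frac{\sqrt{3}}{2\pi}\cdot\frac{27^n}{n}$ via Stirling's formula and then \emph{matches} these coefficient asymptotics back to the singular expansion of $P$, invoking $\Delta$-continuability of the hypergeometric function so that the logarithmic singular type can be read off from the $n^{-1}\rho^{-n}$ coefficient decay. You instead work entirely at the level of functions: you apply the classical connection formula for ${}_2F_1(a,b;a+b;u)$ in the resonant (logarithmic) case, landing directly on $P(z)\sim -\frac{\Gamma(1)}{\Gamma(1/3)\Gamma(2/3)}\log(1-27z)$, and evaluate the constant by the Gamma reflection formula $\Gamma(\tfrac13)\Gamma(\tfrac23)=\pi/\sin(\pi/3)=2\pi/\sqrt{3}$. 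Both arguments ultimately rest on the same analytic facts, and the same constant $\sqrt{3}/(2\pi)$ emerges either from Stirling or from the reflection formula. Your approach is somewhat cleaner for obtaining the singular expansion of $P(z)$ directly (and extends mechanically to higher-order corrections via the full connection formula), whereas the paper's route has the practical advantage that the Stirling asymptotics for $\binom{3n}{n,n,n}$ are needed anyway in the accompanying Corollary~\ref{coroll:noVisits}, so the coefficient-to-singularity matching reuses that work. Your handling of $A_1(z)=1-1/P(z)$ — noting $P$ is zero-free on $[0,\rho)$ by coefficient positivity, and that the divergence of $P$ at $\rho$ makes $1/P$ analytic and vanishing in a $\Delta$-domain — is a careful and welcome addition; the paper treats this step as immediate.
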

\begin{proof}[Proof of Lemma~\ref{lem:SingularPA} and Corollary~\ref{coroll:noVisits}]
	The hypergeometric functions~\eqref{def:hypergeom} are amendable for singularity analysis, see~\cite{FlaSed}, as they are $\Delta$–continuable.
	We use the asymptotics of $\binom{3n}{n,n,n}$, obtained using Stirling's formula, 
	and Stirling's formula for the factorials,
	\begin{equation}
		\label{eqn:Stirling}
		n!\sim \frac{n^n}{e^n}\sqrt{2\pi n},
	\end{equation}
	applied to $\binom{3n}{n,n,n}$ to obtain the desired singular structure of $P(z)$, matching the asymptotics of the sequence to the singular structure.
	We note in passing that this procedure can also be used to obtain more precise (singular) expansions.
	This leads to 
	\begin{equation}
		\label{eqn:AsymptTotalNumber}
		\binom{3n}{n,n,n} \sim \frac{\sqrt{3}}{2\pi}\cdot \frac{27^n}{n}\quad \Leftrightarrow P(z)\sim \frac{\sqrt{3}}{2\pi}\cdot \log\left(\frac{1}{1-\frac{z}\rho}\right)
	\end{equation}
	for $z$ near $\rho=1/27$. The expansion of $A_1(z)$ follows directly from the stated result for $P(z)$. This also implies asymptotics for the sequence $(a_{n,1})$ by singularity analysis~\cite[page 388]{FlaSed}
	\[
	a_{n,1}=[z^n]A_1(z) \sim [z^n] -\frac{1}{\frac{\sqrt{3}}{2\pi}\cdot \log\left(\frac{1}{1-\frac{z}\rho}\right)}\sim \frac{1}{\frac{\sqrt{3}}{2\pi}}\cdot \frac{1}{\rho^n n \log^2n}.
	\]
\end{proof}

Using our enumerative results, we also identify the role of the random variable $X$ of~\eqref{eq:defX}; we point out that a similar interpretation may also be obtained in subsequent applications. 

\begin{proposition}
	Let $X_n$ denote the random variable that encodes the position of the first return to the diagonal. The probability mass function of $X_n$ is given by
	\[
	\Prb{X_n=k}=\frac{a_{n-k,1}\cdot \binom{3k}{k,k,k}} {\binom{3n}{n,n,n}},\quad 0\le k\le n-1.
	\]
	The random variable has a discrete limit law for $n\to\infty$:
	\[
	n-X_n \to X,\quad \Prb{X=\ell}= \frac{a_{\ell,1}}{3^{3\ell}},\quad \ell \ge 0.
	\]  
\end{proposition}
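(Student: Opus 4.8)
The plan is to prove the two assertions separately: the explicit probability mass function of $X_n$ by a combinatorial first‑return decomposition, and the distributional limit $n-X_n\convd X$ by combining that formula with the Stirling asymptotics of $\binom{3n}{n,n,n}$.

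For the pmf of $X_n$ I would condition a uniformly random path from $(n,n,n)$ to the origin on the location $(k,k,k)$ of its first return to the space diagonal, where necessarily $0\le k\le n-1$. Such a path factors uniquely as a first piece from $(n,n,n)$ to $(k,k,k)$ meeting the diagonal only at its two endpoints, concatenated with an arbitrary descending path from $(k,k,k)$ to the origin. Translating the first piece by $-(k,k,k)$ identifies it with a path from $(n-k,n-k,n-k)$ to the origin meeting the diagonal only at its endpoints, of which there are $a_{n-k,1}$ by Corollary~\ref{coroll:noVisits}; the second piece is an arbitrary path from $(k,k,k)$ to $(0,0,0)$, counted by $a_k=\binom{3k}{k,k,k}$; and the two choices are independent. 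Dividing by the total count $\binom{3n}{n,n,n}$ gives the asserted formula. This is merely the bijective content behind~\eqref{eqn:recDiag2}, so I expect no difficulty here.

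For the limit law I would first identify the abstract variable $X$ of~\eqref{eq:defX} in this example. The component weights are $w_n=a_{n,1}$, so Corollary~\ref{coroll:noVisits} gives $\rho_w=1/27$ with $L_w(n)=\frac{2\pi}{\sqrt3}\log^{-2}n$ in~\eqref{eq:condw}; and since $P(z)\to\infty$ as $z\uparrow 1/27$ by Lemma~\ref{lem:SingularPA}, the identity $A_1=1-1/P$ from~\eqref{eqn:recDiag3} gives $W(\rho_w)=A_1(1/27)=1\in\,]0,\infty[$ (the series $\sum_n a_{n,1}\rho_w^n$ converging to this value since $A_1=1-1/P\to1$). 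Feeding this into~\eqref{eq:defX} yields
\[
\Exb{z^X}=\frac{W(\rho_w z)}{W(\rho_w)}=A_1\!\Big(\frac z{27}\Big)=\sum_{\ell\ge1}\frac{a_{\ell,1}}{27^\ell}\,z^\ell,
\]
hence $\Prb{X=\ell}=a_{\ell,1}/3^{3\ell}$ for all $\ell\ge0$, and in particular $\sum_{\ell\ge0}\Prb{X=\ell}=A_1(1/27)=1$.

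Finally, writing $\ell=n-k$, the pmf formula for $X_n$ gives $\Prb{n-X_n=\ell}=a_{\ell,1}\binom{3(n-\ell)}{n-\ell,n-\ell,n-\ell}/\binom{3n}{n,n,n}$ for $n>\ell$, and the Stirling estimate $\binom{3m}{m,m,m}\sim\frac{\sqrt3}{2\pi}\,27^m/m$ recorded in~\eqref{eqn:AsymptTotalNumber} shows that the ratio of multinomials tends to $27^{-\ell}$ for each fixed $\ell$. Thus the pmf of $n-X_n$ converges pointwise to that of $X$. Since the limit is a genuine probability distribution on $\ndN$ — total mass one, as verified above — pointwise convergence of lattice probability mass functions upgrades to convergence in distribution, giving $n-X_n\convd X$. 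The only step that is not completely routine is this last one, namely ruling out escape of mass to infinity; it is precisely the logarithmic singularity of $P$ (equivalently $A_1(1/27)=1$) that makes the candidate limit a probability measure and thereby forecloses that possibility.
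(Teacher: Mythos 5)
Your proof is correct, and since the paper states this proposition without giving a proof, you have supplied essentially the argument the authors clearly had in mind: the first-return (arch) decomposition underlying~\eqref{eqn:recDiag2} yields the pmf formula, and combining $A_1(1/27)=1$ (from the logarithmic divergence of $P$ in Lemma~\ref{lem:SingularPA}) with the Stirling asymptotics~\eqref{eqn:AsymptTotalNumber} gives pointwise convergence of the pmf of $n-X_n$ to a proper distribution on $\ndN_0$, hence convergence in law. Your observation that the log-singularity of $P$ is precisely what makes $\sum_\ell a_{\ell,1}/27^\ell=1$ and so forecloses escape of mass to infinity is exactly the right thing to flag; the identification $\Exb{z^X}=A_1(z/27)$ via~\eqref{eq:defX} with $W=A_1$ and $W(\rho_w)=1$ is also correct.
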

%The probability mass function follows directly by decomposing paths according to arches, the first %arch has length $n-k$, and the remaining path segment can be arbitrarily chosen.
%For the limit law we simply study the quotient $\binom{3k}{k,k,k}/\binom{3n}{n,n,n}$ for $k=n-\ell$, %with $\ell\in\N$ and $n\to\infty$, and obtain the limit law.

In the end, we also note that our result for $F_A$ easily implies a double phase transition - 
continuous to discrete to degenerate - for the components counts $N_{n,j}$ of a certain size $j$, in other words $A_n=j=j(n)$, such that $\sum_{j\ge 1}N_{n,j}=N_n$. We observe an intriguing critical growth range for this classical parameter of interest, given by $j\sim 2W(\frac{\sqrt{n}}{2})$,
where $W(z)$ denotes the Lambert-W-function. 
For results for $N_{n,j}$ in other settings we refer the reader to~\cite{BaKuWa,FlaSed,Stufler2022}. Here, we only note that $N_{n,j}/\lambda_{n,j}\to\text{NegBin}(2,p)$, for $\lambda_{n,j}=\frac{\log(n)}{j \log^{2}(j)}\to c>0$ with $p=1/(c+1)$. We leave the simple computations to the interested reader.

%\begin{corollary}
%\label{coroll:size}
% The random variable $N_{n,j}$, counting the visits to the diagonal after excursions of length $3j$ %in a random walk from $(n,n,n)$ to the origin, has the following limit laws for $n\to\infty$, with
%$j=j(n)$. Let $\lambda_{n,j}=\frac{\log(n)}{j \log^{2}(j)}$. The corresponding critical growth range %is $n\sim j^2e^j$, or in other words $j\sim 2W(\frac{\sqrt{n}}{2})$, where $W(z)$ denotes the Lambert-%W-function\footnote{This function is also known as the product logarithm.},
%\begin{itemize}
%\item Range $j\in\ndN$ fixed: The random variable $N_{n,j}$ satisfies 
%\[
%\frac{N_{n,j}}{a_{j,1}/\rho^j}\to \mathrm{Gamma}(2,1).
%\]
%\item Range $j\to\infty$, such that $\lambda_{n,j}\to\infty$: $\frac{N_{n,j}}{\lambda_{n,j}}\to %\mathrm{Gamma}(2,1)$.
%
%	\item Range $j\to\infty$, such that $\lambda_{n,j}\to c>0$: $N_{n,j}\to \text{NegBin}(2,p)$, with %$p=1/(c+1)$.
%	\item Range $j\to\infty$, such that $\lambda_{n,j}\to c=0$: $N_{n,j}\to 0$.
%\end{itemize}
%\end{corollary}

\subsection{Delannoy walks}
\label{sec:delannoywalks}
Delannoy paths in the cube start at $(n,n,n)$, $\n\in\ndN$ and end at the origin. As in our previous model, the allowed steps are unit steps in the negative direction, $\myvec{-1\\0\\0}, \myvec{0\\-1\\0},\myvec{0\\0\\-1}$ and additionally a step along the diagonal $\myvec{-1\\-1\\-1}$.
Again, we are interested in the number $N_n$ of returns to the space diagonal of the cube. To provide motivation for our model, we recall another famous model of lattice paths in the plane, namely Delannoy paths in the plane~\cite{Banderier_2005}. Such lattice paths start at the origin, end at $(n,n)$ with steps $(1,0)$, $(0,1)$ and $(1,1)$. Reversing the direction leads to paths from $(n,n)$ to the origin, with unit steps $-\vec{e}_1$, $-\vec{e}_2$ plus an additional step along the diagonal $-\vec{e}_1-\vec{e}_2=(-1,-1)$. The enumeration of Delannoy paths in the plane is a classical problem in combinatorics and has nice links to Schr\"oder paths and Schr\"oder numbers~\cite{GesselTalk}. In order to enumerated Delannoy paths in the cube, we first derive the generating function of the total number of paths. Let $d_{n,k}$ denote the number of paths from $(n,n,n)$, ending at the origin,
with $k$ steps $(-1,-1,-1)$ and let $d_n=\sum_{k=0}^n d_{n,k}$. For the first few values see \href{https://oeis.org/A081798}{OEIS A081798}. As we enter the $k$ steps into a path of length $n-k$ from $(n-k,n-k,n-k)$ to the origin.
We readily obtain
\[
d_{n,k}=\binom{3(n-k)}{n-k,n-k,n-k}\cdot\binom{3(n-k)+k}{k}.
\]
Recall $P(z)=\sum_{m\ge 0} \binom{3m}{m,m,m}z^m$, see~\eqref{P:hypergeom} and Lemma~\ref{lem:SingularPA} for its singular expansion. On the level of generating functions this leads to the equation
\[
D(z,q)=\sum_{n\ge 0}\sum_{k=0}^{n}z^n q^k d_{n,k} = \frac{1}{1-zq}P\big(\frac{z}{(1-zq)^3}\big)
=\sum_{m\ge 0} \binom{3m}{m,m,m}\frac{z^m}{(1-zq)^{3m+1}},
\]
and the desired generating function for the diagonal contacts is given by
\[
D(z,q,v)=\sum_{n\ge 0}\sum_{k=0}^{n}\sum_{j=0}^n z^n q^k v^j d_{n,k,j} 
=\frac{1}{1-v\big(1-\frac{1}{D(z,q)}\big)}.
\]
The counting series $D(z)=D(z,1)$ is given by
\[
D(z)=\frac{1}{1-z}P\big(\frac{z}{(1-z)^3}\big).
\]

The asymptotics of $D_n$ and the singular structure of $D(z)$ can be obtained from the singular structure of $P(z)$,
as $D(z)$ is itself a so-called extended functional composition~\cite{BaKuWa} of the form
$D(z)=M(z)\cdot P(H(z))$, where the outer function $P(z)=\sum_{m\ge 0} \binom{3m}{m,m,m}z^m$ determines the structure of the singular expansion. We analyze the equation
\[
\frac{z}{(1-z)^3}=\frac{1}{27},
\]
and obtain the dominant singularity as its real solution
\[
\rho= 1-3\cdot\sqrt[3]{\frac2{\sqrt{5}-1}}+3\cdot\sqrt[3]{\frac{\sqrt{5}-1}2}\approx 0.033444, 
\]
leading to the desired singular behavior, amendable to our main results
Theorems~\ref{te:main1},~\ref{te:main2} and~\ref{te:main3} with $\gamma=1$. 

%as condition~\eqref{eq:regc2} of Proposition~\ref{pro:cong} is readily verified.
%state values?!

At the end of this subsection we note a very similar model. The allowed steps are $\myvec{-1\\-1\\0}, \myvec{0\\-1\\-1},\myvec{-1\\0\\-1}$ and an additional step along the diagonal $\myvec{-1\\-1\\-1}$.
For the first few number of walks we refer to \href{https://oeis.org/A208425}{OEIS A208425}
The counting series of the total number of walks can again be expressed in terms of $P(z)={}_2F_1\big(\frac13,\frac23;1;27z\big)$:
\[
D(z)=\frac{1}{1-z}P\big(\frac{z^2}{(1-z)^3}\big).
\]
Again, our main results Theorems~\ref{te:main1},~\ref{te:main2} and~\ref{te:main3} apply with $\gamma=1$.

\subsection{Dyck bridges and Hadamard products}
\label{ssec:DyckBridges}
Recently, Li and Starr~\cite{Li2024} studied Dyck bridges, where the special case of a single bridge corresponds to the classical setting of a Dyck path in the square. We study $m$ different squares and $m$ walks of length $2n$, all with starting points $(n,n)$ and end points at the origin. We are interested in the number $N_n$ of simultaneous returns to the diagonals $x=y$ for all $m$ paths at the same time. Our main interest is in the case of $m=2$, a pair of Dyck bridges. Li and Starr observed a similar transition from $\sqrt{n}$ -- classical Dyck bridges-- to $\log(n)$ for two bridges, to a discrete number of returns for more than three paths. Similar to our study of the cube, we show how to obtain their result via a combination of our main results, as well as analytic combinatorics and Hadamard products, see Fill et al.~\cite{Fill2005}.

\smallskip

The generating function of all paths is given by a Hadamard product of the Dyck bridge generating function
\[
P(z)=\frac{1}{\sqrt{1-4z}}=\sum_{n\ge 0}\binom{2n}{n}z^n,
\]
such that
\[
P_m(z)= \sum_{n\ge 0}\binom{2n}{n}^m z^n=P^{\odot_m}(z)=P(z)\odot P(z)\odot \dots \odot P(z),
\]
with $P_1(z)=P(z)$. Here, the name-giving Hadamard product of generating functions
\[
A(z)=\sum_{n\ge 0}a_n z^n,
\quad 
B(z)=\sum_{n\ge 0}b_n z^n,
\qquad A(z)\odot B(z)=\sum_{n\ge 0}a_n b_n z^n.
\]
We remind the reader that singularity analysis can applied to Hadamard products~\cite[Theorem VI.10]{FlaSed}; in particular, the zigzag-algorithm~\cite[page 425]{FlaSed} allows to translate the asymptotics of $a_n\cdot b_n$ to the corresponding singular expansion.
By the arch decomposition, the generating function $P_m(z)$ consists of arches after simultaneous returns to the diagonals,
\[
\mathcal{P}_m=\Seq(v\mathcal{F}_m),\quad 
P_m(z,v)=\frac{1}{1-v F_m(z)},
\]
such that
\[
F_m(z)=1-\frac{1}{P_m(z)}.
\]

\begin{example}[A walk in the cube and elliptic integrals]
	\label{Ex:twoBridges}
	We associate a random walks in the cube to the sample paths
	for $m=2$, two Dyck bridges. It starts at $(n,n,n)$, ends at the origin and has steps
	\[
	\myvec{-1\\0\\0}, \myvec{0\\-1\\0},\myvec{-1\\0\\-1},
	\myvec{0\\-1\\-1}.
	\]
	The number $N_n$ of simultaneous returns of the Dyck bridges corresponds to the number of returns to the space diagonal. We postpone the simple bijection to Subsection~\ref{SubSec:squareWalks}. The total number of walks is given by the sequence $\binom{2n}{n}^2$, \href{https://oeis.org/A002894}{OEIS A002894}. It has a generating function, which can be written in terms of the complete elliptic integral of the first kind
	$K(k)$,
	\[
	K(k) = \int_0^{\frac{\pi}{2}} \frac{d\theta}{\sqrt{1-k^2 \sin^2\theta}} = \int_0^1 \frac{dt}{\sqrt{\left(1-t^2\right)\left(1-k^2 t^2\right)}},
	\]
	such that
	\[
	P_2(z)=\frac2\pi K(4\sqrt{z}), \quad F_2(z)=1-\frac{\pi}{2 K(4\sqrt{x})}.
	\]
	The sequence associated to $F_2(z)$ is \href{https://oeis.org/A054474}{OEIS A054474}.
	\begin{figure}[!htb]
		\centering
		\resizebox{5cm}{5cm}
		{
			\begin{tikzpicture}
				\begin{axis}[
					axis lines*=left,
					clip=false, no markers,
					xlabel=$x$,ylabel=$y$,  axis lines=middle,grid=major,
					xtick={0,...,5},  ytick={0,...,5},
					xmin=0, xmax=5,ymin=0, ymax=5
					] 
					\node [draw, fill, red,inner sep=2pt] at (axis cs: 5 , 5){}; 
					\node [draw, fill, red,inner sep=2pt] at (axis cs: 4 , 4){}; 
					\node [draw, fill, red,inner sep=2pt] at (axis cs: 3 , 3){}; 
					\node [draw, fill, red,inner sep=2pt] at (axis cs: 2 , 2){}; 
					\node [draw, fill, red,inner sep=2pt] at (axis cs: 1 , 1){}; 
					\node [draw, fill, red,inner sep=2pt] at (axis cs: 0 , 0){}; 
					\node [draw,circle,fill, blue,inner sep=1.2pt] at (axis cs: 4 , 5){};  
					\node [draw,circle,fill, blue,inner sep=1.2pt] at (axis cs: 3 , 4){};  
					\node [draw,circle,fill, blue,inner sep=1.2pt] at (axis cs: 3 , 3){};  
					\node [draw,circle,fill, blue,inner sep=1.2pt] at (axis cs: 3 , 2){};  
					\node [draw,circle,fill, blue,inner sep=1.2pt] at (axis cs: 2 , 2){};  
					\node [draw,circle,fill, blue,inner sep=1.2pt] at (axis cs: 2 , 1){};
					\node [draw,circle,fill, blue,inner sep=1.2pt] at (axis cs: 2 , 0){};  
					\node [draw,circle,fill, blue,inner sep=1.2pt] at (axis cs: 1 , 0){};  
					\node [draw,circle,fill, blue,inner sep=1.2pt] at (axis cs: 0 , 0){};  
					\addplot[red] coordinates {(0,0) (5,5)};
					\addplot[->, thick,  blue] coordinates {(5,5) (4,5)};
					\addplot[->, thick,  blue] coordinates {(4,5) (4,4)};
					\addplot[->, thick,  blue] coordinates {(4,4) (3,4)};
					\addplot[->, thick,  blue] coordinates {(3,4) (3,3)};
					\addplot[->, thick,  blue] coordinates {(3,3) (3,2)};
					\addplot[->, thick,  blue] coordinates {(3,2) (2,2)};
					\addplot[->, thick,  blue] coordinates {(2,2) (2,1)};
					\addplot[->, thick,  blue] coordinates {(2,1) (2,0)};
					\addplot[->, thick,  blue] coordinates {(2,0) (1,0)};
					\addplot[->, thick,  blue] coordinates {(1,0) (0,0)};
					\node [draw,circle,fill, blue,inner sep=2pt] at (axis cs: 0 , 0){}; 
					\node [draw,circle,fill, blue,inner sep=2.5pt] at (axis cs: 5 , 5){};  
					\node [draw,circle,fill, blue,inner sep=2.5pt] at (axis cs: 4 , 4){};  
				\end{axis}
			\end{tikzpicture} 
		}
		\quad
		\resizebox{5cm}{5cm}
		{
			\begin{tikzpicture}
				\begin{axis}[
					axis lines*=left,
					clip=false, no markers,
					xlabel=$x$,ylabel=$y$,  axis lines=middle,grid=major,
					xtick={0,...,5},  ytick={0,...,5},
					xmin=0, xmax=5,ymin=0, ymax=5
					] 
					\node [draw, fill, red,inner sep=2pt] at (axis cs: 5 , 5){}; 
					\node [draw, fill, red,inner sep=2pt] at (axis cs: 4 , 4){}; 
					\node [draw, fill, red,inner sep=2pt] at (axis cs: 3 , 3){}; 
					\node [draw, fill, red,inner sep=2pt] at (axis cs: 2 , 2){}; 
					\node [draw, fill, red,inner sep=2pt] at (axis cs: 1 , 1){}; 
					\node [draw, fill, red,inner sep=2pt] at (axis cs: 0 , 0){}; 
					\node [draw,circle,fill, blue,inner sep=1.2pt] at (axis cs: 5 , 4){};  
					\node [draw,circle,fill, blue,inner sep=1.2pt] at (axis cs: 3 , 4){};  
					\node [draw,circle,fill, blue,inner sep=1.2pt] at (axis cs: 2 , 4){};  
					\node [draw,circle,fill, blue,inner sep=1.2pt] at (axis cs: 2 , 3){};  
					\node [draw,circle,fill, blue,inner sep=1.2pt] at (axis cs: 1 , 3){};  
					\node [draw,circle,fill, blue,inner sep=1.2pt] at (axis cs: 1 , 2){};
					\node [draw,circle,fill, blue,inner sep=1.2pt] at (axis cs: 0 , 2){};  
					\node [draw,circle,fill, blue,inner sep=1.2pt] at (axis cs: 0 , 1){};  
					\node [draw,circle,fill, blue,inner sep=1.2pt] at (axis cs: 0 , 0){};  
					\addplot[red] coordinates {(0,0) (5,5)};
					\addplot[->, thick,  blue] coordinates {(5,5) (5,4)};
					\addplot[->, thick,  blue] coordinates {(5,4) (4,4)};
					\addplot[->, thick,  blue] coordinates {(4,4) (3,4)};
					\addplot[->, thick,  blue] coordinates {(3,4) (2,4)};
					\addplot[->, thick,  blue] coordinates {(2,4) (2,3)};
					\addplot[->, thick,  blue] coordinates {(2,3) (1,3)};
					\addplot[->, thick,  blue] coordinates {(1,3) (1,2)};
					\addplot[->, thick,  blue] coordinates {(1,2) (0,2)};
					\addplot[->, thick,  blue] coordinates {(0,2) (0,1)};
					\addplot[->, thick,  blue] coordinates {(0,1) (0,0)};
					\node [draw,circle,fill, blue,inner sep=2.5pt] at (axis cs: 4 , 4){};  
					\node [draw,circle,fill, blue,inner sep=2.5pt] at (axis cs: 5 , 5){};  
					\node [draw,circle,fill, blue,inner sep=2pt] at (axis cs: 0 , 0){}; 
				\end{axis}
			\end{tikzpicture} 
		}
		\caption{Two bridges of length $10$ from $(5,5)$ to $(0,0)$ with two simultaneous returns.}
		\label{fig:PairBridges}
	\end{figure}

	\[
	\binom{2n}{n}^2 \sim \frac{16^n}{\pi n}, \quad \Leftrightarrow\quad
	P_2(z) \sim \frac{1}{\pi}\log\Big(\frac{1}{1-\frac{z}{\rho}}\Big),
	\]
	for $z$ near $\rho=1/16$. Consequently, 
	\[
	F_2(z)\sim 1-\frac{\pi}{\log\Big(\frac{1}{1-\frac{z}{\rho}}\Big)}.
	\]
\end{example}

The radius of convergence of $P_m(z)$ and $F_m(z)$ is $\rho_m=\frac{1}{4^m}$, by the ratio test. 
Similarly to our results for the (hyper)-cube, the nature of the composition scheme for the $m$-Dyck bridges problem changes beyond after two bridges. 
\begin{theorem}[Composition schemes for $m$-Dyck bridges]
	The composition scheme $\mathcal{P}=\Seq(v\mathcal{F})$ encodes the number of simultaneous returns to the diagonal $x=y$ 
	for $m$ different Dyck bridges. They are critical for one or two Dyck bridges, and subcritical for $m\ge 3$ bridges. In other words, let $\rho=\rho(m)=4^m$ denote the singularity $P_m(z)$, as well as $F_m(z)$ and $\rho_{G}=1$ the singularity of $G(z)=1/(1-z)$. 
	We have
	\begin{equation}
		\lim_{z\to \rho}F_m(z)
		\begin{cases}
			=1, \quad m=1\text{ and } m=2;\\
			<1,\quad m\ge 3.
		\end{cases}
	\end{equation}
\end{theorem}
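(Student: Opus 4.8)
The entire statement reduces to understanding the boundary value $P_m(\rho_m) := \lim_{z \uparrow \rho_m} P_m(z)$, where $\rho_m = 4^{-m}$ is the radius of convergence of $P_m$. Indeed, by definition $F_m(z) = 1 - 1/P_m(z)$, so once this one-sided limit is known to exist in $(1,\infty]$ we get $\lim_{z \uparrow \rho_m} F_m(z) = 1 - 1/P_m(\rho_m)$, which equals $1$ precisely when $P_m(\rho_m) = \infty$ and lies strictly below $1$ (and above $0$) otherwise. So the plan has two steps: first justify this reduction, then decide for which $m$ the series $P_m$ diverges at its radius of convergence.

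For the reduction I would use that $P_m(z) = \sum_{n \ge 0} \binom{2n}{n}^m z^n$ has nonnegative coefficients and radius of convergence $\rho_m = 4^{-m}$ (already recorded via the ratio test). Monotone convergence along real $z \uparrow \rho_m$ then yields $\lim_{z \uparrow \rho_m} P_m(z) = \sum_{n \ge 0} \binom{2n}{n}^m \rho_m^n$, a value in $[1,\infty]$; since the $n=0$ term is $1$ and all later terms are strictly positive, the sum is in $(1,\infty]$, hence $1 - 1/P_m(\rho_m) \in [0,1)$ in the finite case and $= 1$ in the infinite case. (If one insists on a genuine complex limit $z \to \rho_m$ from inside the disc, one adds the bound $|\binom{2n}{n}^m z^n| \le \binom{2n}{n}^m \rho_m^n$ and invokes dominated convergence in the convergent case, while in the divergent case $P_m(z) \to \infty$ already along the real axis.)

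For the second step, Stirling's formula~\eqref{eqn:Stirling} gives $\binom{2n}{n} \sim 4^n/\sqrt{\pi n}$, so the general term of $P_m(\rho_m)$ is $\binom{2n}{n}^m \rho_m^n \sim (\pi n)^{-m/2}$. Comparison with the $p$-series $\sum_n n^{-m/2}$ shows convergence exactly for $m/2 > 1$, i.e. $m \ge 3$, and divergence for $m = 1$ (of type $\sum_n n^{-1/2}$) and $m = 2$ (the harmonic series). Combining with the first step: for $m \in \{1,2\}$ we get $P_m(\rho_m) = \infty$ and $\lim_{z \to \rho_m} F_m(z) = 1$; for $m \ge 3$ we get $1 < P_m(\rho_m) < \infty$ and $\lim_{z \to \rho_m} F_m(z) = 1 - 1/P_m(\rho_m) \in (0,1)$, in particular $< 1$. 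The remaining assertion that $\rho_G = 1$ is the singularity of $G(z) = 1/(1-z)$ is immediate, and it is what marks the schemes with $m \ge 3$ as subcritical: the outer $\Seq$-function's own singularity at $1$ lies strictly beyond the value $F_m(\rho_m)$ reached by the inner function. I expect no real obstacle here; the only point needing a word of care is the passage to the boundary for $P_m$, which is handled by monotone (or dominated) convergence thanks to the nonnegativity of the coefficients, and no Hadamard-product or zigzag machinery is needed for this particular statement — plain Stirling together with the $p$-series test suffices.
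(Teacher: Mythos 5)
Your proposal is correct and takes essentially the same route as the paper: both hinge on the Stirling estimate $\binom{2n}{n}^m\rho_m^n\sim(\pi n)^{-m/2}$ and the convergence or divergence of the resulting $p$-series at the radius of convergence. The paper handles $m=1$ and $m=2$ by referring back to their logarithmic singular expansions (Example~\ref{Ex:twoBridges}), whereas you treat all $m$ uniformly with the same $p$-series comparison and a monotone-convergence remark for the boundary value; the underlying asymptotics and conclusion are identical.
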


\begin{remark}
	Similar questions can be studied for other step sets~\cite{BanderierFlajolet2002} with zero drift, like Motzkin bridges, again leading to the same behavior.
\end{remark}
\begin{proof}
	Case $m=1$ corresponds to classical Dyck bridges and the case $m=2$ was presented before in Example~\ref{Ex:twoBridges}, 
	leading to 
	\[
	\lim_{z\to \rho}F_2(z)=1-\lim_{z\to \rho}\frac{1}{P_2(z)}=1=\rho_G.
	\]
	For $m\ge 3$ we observe that the generating function $P(z)$ converges at $z=\rho$ to a positive value, 
	due to the asymptotics of $\binom{2n}{n}^m\sim (4^m)^{n}/(n\pi)^{m/2}$. 
	Thus, as stated
	\[
	\lim_{z\to \rho}F_m(z)=1-\frac{1}{P_m(\rho)}<1=\rho_G.
	\]
\end{proof}

This behavior also reflects itself directly in the distribution of the number of simultaneous returns $X_n$, defined by
\[
\Prb{X_n=k}=\frac{[z^n v^k] P_m(z,v)}{[z^n]P_m(z,1)}=\frac{[z^n]F_m^k(z)}{\binom{2n}{n}^m}.
\]
Following along the lines of our earlier results, the functions in Example~\ref{Ex:twoBridges} can be treated identically to our functions $P(z)$ and $A_1(z)$ discussed before. 
In particular, we note that a refined analysis can also be easily carried out, leading to an extension of the results in~\cite{Li2024}, as in $\sum_{j\ge 0}N_{n,j}=N_n$. 
We state our results for the number of simultaneous returns $N_n$ in an abbreviated way,
as they we simply apply our main theorems. Note that by Theorem~\ref{the:bijDiagonal} $N_n$ also counts the returns to the origin in the diagonal random walk in $\ndZ^2$. 
This corrects an error of~\cite{Li2024}, where the limit law is stated as an ordinary exponential distribution instead of a $\mathrm{Gamma}(2,1)$ distribution, and also extends their result to the refinements $N_{n,j}$.

\begin{corollary}
	The scaled random variable $N_n/(\frac{\log(n)}{\pi})$ converges in distribution 
	to a Gamma distribution with density function $f(x)=xe^{-x}$, $x\ge 0$:
	\[
	\frac{N_n}{\frac{\log(n)}{\pi}}\to \mathrm{Gamma}(2,1).
	\]
	The convergence holds true for all raw moments, and also in terms of a local limit theorem. 
\end{corollary}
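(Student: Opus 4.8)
The plan is to present $P_2(z,v)=1/(1-vF_2(z))$ as the generating function of a composition scheme satisfying our standing assumptions, and then to deduce the statement from Theorem~\ref{te:main1}. In the language of Section~\ref{sec:proofs}, the arch decomposition $\mathcal{P}_2=\Seq(v\mathcal{F}_2)$ exhibits $N_n$ as the number of components of a Gibbs partition whose component weights are $w_n=[z^n]F_2(z)$ and whose component-count weights are $v_k\equiv 1$, so that $U(z)=1/(1-W(z))=P_2(z)$. The common dominant singularity of $P_2$ and $F_2$ is $\rho=\rho_w=1/16$, and by the criticality established above for $m=2$ (equivalently, by the expansion $F_2(z)\sim 1-\pi/\log(1/(1-z/\rho))$ recorded in Example~\ref{Ex:twoBridges}) we have $W(\rho_w)=F_2(1/16)=1\in ]0,\infty[$. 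Since $v_k\equiv 1$ and $W(\rho_w)=1$, condition~\eqref{eq:condv} holds with $L_v\equiv 1$ and $\alpha=1$.

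It remains to verify~\eqref{eq:condw}. The function $P_2(z)=\tfrac{2}{\pi}K(4\sqrt z)$ is $\Delta$-analytic (for instance as the Hadamard product $P(z)\odot P(z)$ of $\Delta$-continuable functions, cf.~\cite[Theorem~VI.10]{FlaSed}) and zero-free on a slit neighbourhood of $\rho$, since it has positive coefficients and diverges logarithmically as $z\to\rho$; hence $F_2=1-1/P_2$ is again $\Delta$-analytic with singular expansion $F_2(z)\sim 1-\pi/\log(1/(1-z/\rho))$. Transferring this $1/\log$-type singularity to coefficient asymptotics exactly as for $A_1$ and $a_{n,1}$ in Lemma~\ref{lem:SingularPA} and Corollary~\ref{coroll:noVisits} gives
\[
w_n=[z^n]F_2(z)\sim \pi\,\frac{\rho^{-n}}{n\log^2 n},
\]
so that~\eqref{eq:condw} holds with $\rho_w=1/16$, $L_w(n)=\pi/\log^2 n$, and $W(\rho_w)=1\in ]0,\infty[$.

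Since $L_w(n)=\pi/\log^2 n=L(\log n)/\log^{1+\gamma} n$ with $\gamma=1$ and $L\equiv\pi$, Proposition~\ref{pro:cong}(2) applies and gives
\[
\Prb{X>n}\sim W(\rho_w)^{-1}\,\frac{L(\log n)}{\gamma\log^\gamma n}=\frac{\pi}{\log n}.
\]
Theorem~\ref{te:main1} with $\alpha=1$ then yields $N_n\Prb{X>n}\convd\mathrm{Gamma}(2,1)$, and hence
\[
\frac{N_n}{\log(n)/\pi}=N_n\cdot\frac{\pi}{\log n}\convd\mathrm{Gamma}(2,1),
\]
whose density is $f(x)=xe^{-x}$ because $\Gamma(2)=1$. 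The convergence of all raw moments and the local limit theorem $\Prb{N_n=k}\sim\frac{\pi}{\log n}\,xe^{-x}$ for $x=k\pi/\log n$ ranging over a compact subset of $]0,\infty[$ are likewise part of the conclusion of Theorem~\ref{te:main1}.

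The only step requiring real work is the coefficient transfer of the second paragraph: one has to know that $P_2$ is $\Delta$-continuable and zero-free on a slit disk, so that $F_2=1-1/P_2$ inherits these properties, and then apply the transfer theorem in the correct regime for the weak ($1/\log$) singularity. This is entirely parallel to the treatment of $P(z)$ and $A_1(z)$ in Subsection~\ref{sec:randomwalkinthecube} and poses no new difficulty. I note that it is precisely the value $\alpha=1$ (forced by the $\Seq$-construction together with $W(\rho_w)=1$) that produces the $\mathrm{Gamma}(2,1)$ limit rather than the exponential law claimed in~\cite{Li2024}.
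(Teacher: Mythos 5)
Your proposal is correct and is precisely the argument the paper leaves implicit when it states that the corollary follows by ``simply applying the main theorems'': you instantiate the composition scheme with $W=F_2$, $v_k\equiv 1$, read off $W(\rho_w)=F_2(1/16)=1$, $\alpha=1$, $L_v\equiv 1$, transfer the $1/\log$ singularity to $w_n\sim\pi\,16^{n}/(n\log^2n)$ exactly as in Lemma~\ref{lem:SingularPA}/Corollary~\ref{coroll:noVisits}, use Proposition~\ref{pro:cong}(2) with $\gamma=1$, $L\equiv\pi$ to get $\Prb{X>n}\sim\pi/\log n$, and conclude from Theorem~\ref{te:main1}. The only slip is notational: with $V(z)=\sum_{k\ge1}z^k=z/(1-z)$ one has $U(z)=W(z)/(1-W(z))=P_2(z)-1$, not $1/(1-W(z))$; since this differs only in the constant term it is irrelevant for $n\ge1$.
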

%As before, the random variable $N_{n,j}$, counting the simultaneous visits after exactly $2j$ steps, 
%has a phase transition according to the growth of $j$, with $j=j(n)$, identically to %Corollary~\ref{coroll:size}.

\subsection{Colored walks}
\label{sec:coloredwalks}
In the following, we consider colored paths in the cube. Let $r>0$ be an integer.
An $r$-coloured bridge is an $r$-tuple $(B_1,\dots,B_r)$ of (possibly empty) bridges $B_i$. Each bridge itself is a path from a corner an upper corner $(k,k,k)$, $k\in\ndN_0$, to the origin. As a visual representation, we think of them appended one after the other, $B_i$ is colored in color $i$, $1\le i\le r$. Note that not all colors need to be present. See Andrews~\cite{Andrews2007} and~\cite{GhoshDastidarWallner2024,HopkinsOuvry2021} for some combinatorial properties of these walks, and links with multicompositions. We also refer to the recent works~\cite{BaKuWaSt,BaKuWa} for the appearance of colored walks in composition schemes. The bivariate generating function of such $r$-colored bridges is readily obtained using symbolic combinatorics. For a step set consisting of unit steps $\myvec{-1\\0\\0}$, $\myvec{0\\-1\\0}$, $\myvec{0\\0\\-1}$, we get the following generalization of Theorem~\ref{the:DiagToDiag}: The bivariate generating function $A^{[r]}(z,q)$ 
of the number of colored paths with $r$ colors of length $3n$ from $(n,n,n)$ to the origin with exactly $j$ returns to the diagonal
is given by 
\[
A^{[r]}(z,q)=\frac{1}{\big(1-q A_1(z)\big)^r},
\]
with $A_1(z)= 1-\frac{1}{P(z)}$ and $P(z)=\sum_{m\ge 0} \binom{3m}{m,m,m}z^m$~\eqref{P:hypergeom}. Thus, our results apply, with $\alpha=r$, and similarly for Delannoy paths in the cube, as well as the cube walk of Example~\ref{Ex:twoBridges}.

\subsection{Kreweras, lazy Kreweras and diagonal square lattice walks\label{SubSec:squareWalks}}
The lattice paths of interest in the cube are related to walks in the $\ndZ\times\ndZ$ plane with so-called Kreweras steps $\mathcal{S}_K=\{\myvec{-1 \\0}, \myvec{0\\-1},\myvec{1\\1}\}$. 
Kreweras walks in the quarter plane $\ndN_0\times\ndN_0$ recently received a lot of attention~\cite{BM2005,BMM2010,K1965}. 
We map the paths in the cuboid and their coordinates $(x,y,z)\in\ndN_0^3$ to paths in the plane with coordinates $(x-z,y-z)\in\ndZ^2$.
The steps are mapped as follows. Let $\mathcal{S}=\{\myvec{-1\\0\\0},\myvec{0\\-1\\0},\myvec{0\\0\\-1}\}$ denote the unit steps for the random walk in the cuboid. Let $\Phi\colon \mathcal{S}\to \mathcal{S}_K$ denote the bijection between the step sets:
\[
\myvec{-1\\0\\0}\xmapsto{\Phi} \myvec{-1\\0},\quad
\myvec{0\\-1\\0}\xmapsto{\Phi} \myvec{0\\-1},\quad
\myvec{0\\0\\-1}\xmapsto{\Phi} \myvec{1\\1}.
\]
This is readily extended to $\Phi\colon \ndN_0^3\to\ndZ^2$: for any path $p$ in $\ndN_0^3$ by apply the restriction to the individual steps. 
One may also look at an arbitrary starting point $(n_1,n_2,n_3)$ in the cuboid. We collect our findings in the following theorem.

\begin{theorem}
	\label{the:Kreweras}
	The simple random walks from $(n,n,n)$ to the origin are in bijection $\Phi$ with Kreweras walks in the plane of length $3n$, starting and ending at the origin. In particular, the number of times that the paths return to the diagonal $x=y=z$ equals the number of returns to the origin in the corresponding Kreweras walk. 
\end{theorem}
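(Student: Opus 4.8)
The plan is to verify that the asserted bijection is just the coordinate change $\Phi$ applied path-wise, so that the whole statement reduces to three elementary observations about the linear map $\Phi\colon\ndZ^3\to\ndZ^2$, $(x,y,z)\mapsto(x-z,y-z)$: it is linear (hence commutes with forming partial sums of increments), it sends $(n,n,n)\mapsto(0,0)$ and $(0,0,0)\mapsto(0,0)$, and it restricts to the step bijection $\mathcal{S}\to\mathcal{S}_K$ displayed above. From these it is immediate that if $(q_0,\dots,q_{3n})$ is a cube walk from $(n,n,n)$ to $(0,0,0)$ with $q_{i+1}-q_i\in\mathcal{S}$, then $(\Phi(q_i))_{0\le i\le 3n}$ is a walk of length $3n$ in $\ndZ^2$ from the origin to the origin with increments in $\mathcal{S}_K$, that is, a Kreweras walk in the plane.

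For the inverse I would start from such a Kreweras plane walk $(p_0,\dots,p_{3n})$, apply the inverse of the step bijection to each increment $p_{i+1}-p_i$ to obtain increments in $\mathcal{S}$, and set $q_0=(n,n,n)$, $q_{i+1}=q_i+\Phi^{-1}(p_{i+1}-p_i)$; linearity then gives $\Phi(q_i)=p_i$ for all $i$. The displacement equations for a closed plane walk of length $3n$ force exactly $n$ increments of each type, so the cube walk ends at $(n,n,n)+(-n,-n,-n)=(0,0,0)$. The only admissibility point is that $(q_i)$ must stay inside the cube, and this is automatic: every increment in $\mathcal{S}$ lowers one coordinate and leaves the others fixed, so each coordinate of $(q_i)$ is weakly decreasing and therefore stays between its terminal value $0$ and its initial value $n$ — which is also why the total counts on both sides agree with $\binom{3n}{n,n,n}$, cf.\ \eqref{P:hypergeom}. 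Injectivity of the forward map is a one-line induction: two cube walks with identical $\Phi$-image share the vertex $q_0=(n,n,n)$, and since the step bijection is injective, equal $\Phi$-increments force equal increments, hence $q_i=q_i'$ for all $i$. Thus $\Phi$ is a bijection between the two path classes.

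For the statement on returns, note that $q_i$ lies on the space diagonal $x=y=z$ exactly when $x_i-z_i=y_i-z_i=0$, i.e.\ exactly when $\Phi(q_i)=(0,0)$; hence the set of times at which a cube walk touches the diagonal coincides with the set of times at which its image touches the origin, and in particular the two counts are equal. Since the two distinguished contacts — the start $(n,n,n)$, on the diagonal, and the end $(0,0,0)$, at the origin — correspond under $\Phi$, this agreement does not depend on how one chooses to count the initial and terminal returns. I do not expect a genuine obstacle; the single subtlety is that $\Phi$ is \emph{not} injective as a map of points (its kernel is the line $\{(t,t,t)\}$), so the inverse must be constructed at the level of increment sequences — equivalently, pinned down by the shared initial vertex — rather than vertex by vertex, and one should be mindful that the target consists of Kreweras walks in the \emph{full} plane $\ndZ^2$, with no quarter-plane restriction, which is what matches the (vacuous) constraint of staying inside the cube.
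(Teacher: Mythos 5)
Your proof is correct and follows the same route as the paper: the linear map $\Phi\colon(x,y,z)\mapsto(x-z,y-z)$, applied vertex-wise, together with the observation that it induces the displayed bijection on step sets. The paper itself does not spell out a proof of this theorem — it only defines $\Phi$ on steps and on paths in the paragraph preceding the statement and then records the claim — so your write-up is strictly more careful. In particular your remarks that (i) the inverse must be built at the level of increment sequences anchored at $(n,n,n)$ because $\Phi$ has kernel the diagonal line, (ii) the displacement equations force exactly $n$ increments of each type so the reconstructed cube walk terminates at the origin, and (iii) the cube constraint is automatic since all three coordinates are weakly decreasing from $n$ to $0$, are exactly the points one would want made explicit, and they are not present in the paper. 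The return correspondence via $\Phi^{-1}(0,0)\cap\ndN_0^3 = \{(t,t,t)\}$ being precisely the space diagonal is likewise correct and matches the paper's intent. No gap.
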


\begin{figure}[!htb]
	\resizebox{6cm}{6cm}
	{
		\begin{tikzpicture}
			\begin{axis}[
				grid=major, xlabel=$x$,ylabel=$y$,zlabel=$z$,
				xtick={0,...,6},  ytick={0,...,5}, ztick={0,...,4},
				] 
				\node [draw,circle,fill, blue,inner sep=2pt] at (axis cs: 0 , 0,0){}; 
				\node [draw,circle,fill, blue,inner sep=2pt] at (axis cs: 6 , 5,4){}; 
				\addplot3[mark=square*, red] coordinates {(0,0,0) (1,1,1) (2,2,2)(3,3,3)(4,4,4)};
				\addplot3[mark=*, blue] coordinates {(6,5,4)(6,4,4) (5,4,4) (4,4,4)(3,4,4)(2,4,4)(2,4,3)(2,4,2)(2,3,2)(2,2,2)(1,2,2)(0,2,2)(0,1,2)(0,1,1)(0,0,1)(0,0,0)};
			\end{axis}
		\end{tikzpicture} 
	}
	\quad
	\resizebox{6cm}{6cm}
	{
		\begin{tikzpicture}
			\begin{axis}[
				axis lines*=left,
				clip=false, no markers,
				xlabel=$x$,ylabel=$y$,  axis lines=middle,grid=major,
				xtick={-4,...,4},  ytick={-4,...,4},
				xmin=-2, xmax=2,ymin=-2, ymax=2
				] 
				\node [draw,circle,fill, blue,inner sep=2pt] at (axis cs: 2 , 1){}; %y
				\node [draw,circle,fill, blue,inner sep=1.2pt] at (axis cs: 2 , 0){}; %x 
				\node [draw,circle,fill, blue,inner sep=1.2pt] at (axis cs: 1 , 0){};   %x 
				\node [draw,circle,fill, blue,inner sep=1.2pt] at (axis cs: 0 , 0){};   %x 
				\node [draw,circle,fill, blue,inner sep=1.2pt] at (axis cs: -1 , 0){};   %x 
				\node [draw,circle,fill, blue,inner sep=1.2pt] at (axis cs: -2 , 0){};  %z 
				\node [draw,circle,fill, blue,inner sep=1.2pt] at (axis cs: -1 , 1){};   %z
				\node [draw,circle,fill, blue,inner sep=1.2pt] at (axis cs: 0 , 2){};   %y
				\node [draw,circle,fill, blue,inner sep=1.2pt] at (axis cs: 0 , 1){}; %y
				\node [draw,circle,fill, blue,inner sep=1.2pt] at (axis cs: 0 , 0){};  
				\node [draw,circle,fill, blue,inner sep=1.2pt] at (axis cs: -2 , -1){};  
				\node [draw,circle,fill, blue,inner sep=1.2pt] at (axis cs: -1 , -1){};    
				\addplot[->, thick,  blue] coordinates {(2,1) (2,0)};%y
				\addplot[->, thick,  blue] coordinates {(2,0) (1,0)}; %x 
				\addplot[->, thick,  blue] coordinates {(1,0) (0,0)}; %x 
				\addplot[->, thick,  blue] coordinates {(0,0) (-1,0)}; %x 
				\addplot[->, thick,  blue] coordinates {(-1,0) (-2,0)}; %x 
				\addplot[->, thick,  blue] coordinates {(-2,0) (-1,1)};%z 
				\addplot[->, thick,  blue] coordinates {(-1,1) (0,2)};%z 
				\addplot[->, thick,  blue] coordinates {(0,2) (0,1)}; %y
				\addplot[->, thick,  blue] coordinates {(0,1) (0,0)}; %y
				\addplot[->, thick,  blue] coordinates {(0,0) (-1,0)}; %x 
				\addplot[->, thick,  blue] coordinates {(-1,0) (-2,0)}; %x 
				\addplot[->, thick,  blue] coordinates {(-2,0) (-2,-1)};%y
				\addplot[->, thick,  blue] coordinates {(-2,-1) (-1,0)};%z
				\addplot[->, thick,  blue] coordinates {(-1,0) (-1,-1)};%y
				\addplot[->, thick,  blue] coordinates {(-1,-1) (0,0)};%z
				\node [draw,circle,fill, blue,inner sep=2pt] at (axis cs: 0 , 0){}; 
			\end{axis}
		\end{tikzpicture} 
	}
	\caption{A sample path $\omega$ of length $15$ from $(6,5,4)$ to $(0,0,0)$
		and the corresponding walk with Kreweras steps from $(2,1)$ to $(0,0)$.}
\end{figure}

At the end of this example we note a bijection $\Psi$ between Delannoy paths, starting at $(n,n,n)$ and ending at the origin,
%\[
%\mathcal{S}_{D}=\{\myvec{-1\\0\\0},\myvec{0\\-1\\0},\myvec{0\\0\\-1},\myvec{-1\\-1\\-1}\},
%\]
to what we call lazy Kreweras-type walks in $\ndZ^{2}$, starting and ending at the origin, with step set 
\[
\mathcal{S}_{LK}=\{\myvec{-1\\0},\myvec{0\\-1},\myvec{1\\1},\myvec{0\\0}\}.
\]
We have
\[
\myvec{-1\\0\\0}\xmapsto{\Psi} \myvec{-1\\0},\quad\myvec{0\\-1\\0}\xmapsto{\Psi} \myvec{0\\-1},\quad
\myvec{0\\0\\-1}\xmapsto{\Psi} \myvec{1\\1},\quad \myvec{-1\\-1\\-1}\xmapsto{\Psi}\myvec{0\\0},
\]
and the bijection extends to paths by application $\Psi$ to each edge on the particular path. The bijection naturally extends higher dimensions. Delannoy paths in the hypercube in $\ndN_0^m$, $m\ge 3$,
are defined by the step set 
\[
\mathcal{S}_{D}=\{-\vec{e}_{1,m},\dots,-\vec{e}_{m,m},-\sum_{k=1}^{m}\vec{e}_{k,m}\}
\]
and lazy Kreweras walks in $\ndZ^{m-1}$, starting and ending at the origin, with step set 
\[
\mathcal{S}_{LK}=\{-e_{1,m-1},\dots,-e_{m-1,m-1},\sum_{k=1}^{m-1}e_{k,m-1},\vec{0}\}.
\]
Here, $\vec{e}_{k,j}$ denotes the $k$th unit vector in $\ndZ^j$, $j\ge 1$. Then, the restricted map $\Psi\colon \mathcal{S}_{D}\to \mathcal{S}_{LK}$ is defined by
\[
-\vec{e}_{k,m}\xmapsto{\Psi}\vec{e}_{k,m-1},\quad\text{for }\,1\le k\le m-1,
\] 
and
\[
-\vec{e}_{m,m}\xmapsto{\Psi}\sum_{k=1}^{m-1}e_{k,m-1},\quad
-\sum_{k=1}^{m}\vec{e}_{k,m}\xmapsto{\Psi}\vec{0}.
\]

\begin{figure}[!htb]
	\centering
	\resizebox{5cm}{5cm}
	{
		\begin{tikzpicture}
			\begin{axis}[
				axis lines*=left,
				clip=false, no markers,
				xlabel=$x$,ylabel=$y$,  axis lines=middle,grid=major,
				xtick={-4,...,4},  ytick={-4,...,4},
				xmin=-2, xmax=2,ymin=-2, ymax=2
				] 
				\node [draw,circle,fill, blue,inner sep=1.2pt] at (axis cs: -1 , 1){}; 
				\node [draw,circle,fill, blue,inner sep=1.2pt] at (axis cs: -2 , 2){};  
				\node [draw,circle,fill, blue,inner sep=1.2pt] at (axis cs: 0 , 2){};  
				\node [draw,circle,fill, blue,inner sep=1.2pt] at (axis cs: 1 , 0){};  
				\node [draw,circle,fill, blue,inner sep=1.2pt] at (axis cs: 1 , 1){};  
				\node [draw,circle,fill, blue,inner sep=1.2pt] at (axis cs: 1 , -1){};  
				\addplot[->, thick,  blue] coordinates {(0,0) (-1,1)};
				\addplot[->, thick,  blue] coordinates {(-1,1) (-2,2)};
				\addplot[->, thick,  blue] coordinates {(-2,2) (-1,1)};
				\addplot[->, thick,  blue] coordinates {(-1,1) (0,2)};
				\addplot[->, thick,  blue] coordinates {(0,2) (-1,1)};
				\addplot[->, thick,  blue] coordinates {(-1,1) (0,2)};
				\addplot[->, thick,  blue] coordinates {(0,2) (1,1)};
				\addplot[->, thick,  blue] coordinates {(1,1) (0,0)};
				\addplot[->, thick,  blue] coordinates {(0,0) (1,-1)};
				\addplot[->, thick,  blue] coordinates {(1,-1) (0,0)};
				\node [draw,circle,fill, blue,inner sep=2pt] at (axis cs: 0 , 0){}; 
			\end{axis}
		\end{tikzpicture} 
	}
	\caption{The diagonal walk in $\ndR^2$ of length ten, corresponding to the two bridges in Figure~\ref{fig:PairBridges}: $\binom00\to\binom{-1}1\to\binom{-2}{2}\to\binom{-1}{1}\to\binom{0}{2}\to\binom{-1}{1}\to\binom{0}{2}\to\binom{1}{1}\to\binom{0}{0}
		\to\binom{1}{-1}\to\binom{0}{0}$}
\end{figure}
Before we turn to enumerative results
we note that the collection of $m\ge 2$ Dyck bridges also has a corresponding model of a random walk in $\ndZ^m$. 

\begin{lemma}[Collections of Dyck bridges and diagonal walks]
	\label{the:bijDiagonal}
	The sample paths of $m\ge 2$ Dyck bridges of length $2n$ are in bijection with a random walk of length $2n$ in $\ndZ^m$,
	starting and ending at $(0,\dots,0)$, with step set 
	\[
	\mathcal{S}_m=\{(\pm 1,\pm 1, \dots, \pm 1)\}
	\]
	of size $2^m$. Moreover, the simultaneous returns of the $m$ Dyck bridges equal the number of returns to the origin in $\ndZ^{m}$.
	In particular, a pair of Dyck bridges of length $2n$ are in bijection with so-called diagonal random walks in $\ndZ^2$ of length $2n$, 
	with steps set
	\[
	\mathcal{S}_2=\{(1,1),(-1,1),(1,-1),(-1,-1)\}.
	\]
	Furthermore, the pair of bridges is also in bijection with the walk, starting at $(n,n,n)$, ending at the origin and steps
	\[
	\myvec{-1\\0\\0}, \myvec{0\\-1\\0},\myvec{-1\\0\\-1},
	\myvec{0\\-1\\-1}.
	\]
	
\end{lemma}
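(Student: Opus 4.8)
The plan is to realize every correspondence in the statement as an explicit, step-by-step relabeling of lattice paths, so that the enumerative identity $\binom{2n}{n}^2$ for a pair of bridges appears as a byproduct rather than being needed as an input.

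\textbf{The general bijection.} In this setting a Dyck bridge of length $2n$ is a monotone lattice path from $(n,n)$ to $(0,0)$ built from the two unit steps $\myvec{-1\\0}$ and $\myvec{0\\-1}$ (a classical $\pm1$ Dyck bridge of length $2n$ drawn in the square), and its contacts with the diagonal $x=y$ after $t$ steps are exactly the instants at which equally many of the two step types have been used. I would encode a bridge $B_i$ by the $\pm1$-valued walk $h_i$ whose $t$-th increment is $-1$ when the $t$-th step of $B_i$ is $\myvec{-1\\0}$ and $+1$ when it is $\myvec{0\\-1}$; equivalently, $h_i$ tracks the coordinate difference $x-y$ along $B_i$. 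Since $B_i$ ends on the diagonal, $h_i$ is a $\pm1$ bridge of length $2n$, and reading its increments forward from $(n,n)$ recovers $B_i$ uniquely. Stacking coordinates, $(B_1,\dots,B_m)\mapsto H:=(h_1,\dots,h_m)$ is a bijection onto the walks of length $2n$ in $\ndZ^m$ from the origin to the origin with increments in $\{\pm1\}^m=\mathcal{S}_m$, and by construction $H$ is at the origin after $t$ steps exactly when $B_1,\dots,B_m$ all touch their diagonals after $t$ steps. This gives the first two assertions; and since $\{\pm1\}^2=\{(1,1),(-1,1),(1,-1),(-1,-1)\}=\mathcal{S}_2$, the case $m=2$ is precisely the diagonal walk in $\ndZ^2$.

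\textbf{The cube walk.} For the last assertion I would compose, increment by increment, with the bijection of step sets
\[
\myvec{1\\1}\mapsto\myvec{-1\\0\\0},\quad \myvec{-1\\1}\mapsto\myvec{0\\-1\\0},\quad \myvec{1\\-1}\mapsto\myvec{-1\\0\\-1},\quad \myvec{-1\\-1}\mapsto\myvec{0\\-1\\-1}.
\]
Reading the image of a diagonal walk as a path from $(n,n,n)$ and writing $a_t,b_t,c_t,d_t$ for the numbers of the four $\mathcal{S}_2$-increments among the first $t$, the position after $t$ steps is $x=n-(a_t+c_t)$, $y=n-(b_t+d_t)$, $z=n-(c_t+d_t)$; hence $x=y$ iff $a_t+c_t=b_t+d_t$, $y=z$ iff $b_t=c_t$, and so $x=y=z$ iff $a_t=d_t$ and $b_t=c_t$. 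This last pair of conditions is exactly the vanishing of the partial sum of the $\mathcal{S}_2$-increments, so the composite is a bijection between pairs of Dyck bridges of length $2n$ and the cube walks of Example~\ref{Ex:twoBridges}, carrying simultaneous diagonal contacts to visits of the space diagonal $x=y=z$; specializing to $t=2n$ also recovers the count $\binom{2n}{n}^2$.

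\textbf{Where the care is needed.} None of this is deep, but the one genuinely non-automatic point is that the step-set relabeling used for the cube walk is a bijection of sets and \emph{not} a linear (or even additive) map — for instance $\myvec{-1\\-1}=-\myvec{1\\1}$ while $\myvec{0\\-1\\-1}\ne-\myvec{-1\\0\\0}$ — so the preservation of the "return" statistic cannot be read off from a change of coordinates and has to be checked through the step-count bookkeeping above. The other thing to watch is keeping the orientation conventions coherent throughout (tracking $x-y$ rather than $y-x$, and pairing $+1$ with $\myvec{0\\-1}$), which is what makes the diagonal/origin dictionary come out in the stated form.
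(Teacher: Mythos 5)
Your proof is correct and follows essentially the same route as the paper's for the general bijection: encoding each Dyck bridge $B_i$ by the $\pm1$ walk $h_i$ that tracks the coordinate difference $x-y$ is exactly the paper's ``reverse and rotate by $45^\circ$'' step, and stacking the $h_i$ gives the same walk in $\ndZ^m$ with step set $\{\pm 1\}^m$ and the same identification of simultaneous diagonal contacts with returns to the origin. Where you diverge is the final cube-walk assertion: the paper only records the forced step counts ($k$ steps $\myvec{-1\\0\\0}$ forcing $n-k$ of $\myvec{-1\\0\\-1}$, then $k$ of $\myvec{0\\-1\\-1}$ and $n-k$ of $\myvec{0\\-1\\0}$) and declares the bijection ``suitable,'' whereas you write down an explicit relabeling of step sets and verify via the $a_t,b_t,c_t,d_t$ bookkeeping both that the endpoints match and that visits to $x=y=z$ occur exactly at returns of the $\mathcal{S}_2$-walk to the origin. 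Your remark that this relabeling of $\mathcal{S}_2$ is a bijection of sets but not an additive map (so the preservation of the return statistic cannot be read off from a change of coordinates) is a genuinely useful clarification that the paper's brief argument leaves implicit; what you gain over the paper is a fully checked correspondence for the cube walk, at the cost of a few extra lines of bookkeeping.
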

\begin{proof}
	This bijection is a folklore result, at least for $m=2$, see~\cite{BOR2019,BaKuWaSt}. We reverse the direction of the Dyck bridges such that they end at $(n,n)$. Then we rotate the coordinate system by 45 degrees, leading $m$ directed lattice walks with steps $(1,1)$ and $(1,-1)$. Then, we map the $y$-coordinate of the $k$th bridge, with $1\le k\le m$, to the $k$th coordinate of the random walk in $\ndZ^m$. This leads to the desired bijection. 
	In particular, we note that a return to zero of the $k$th bridge corresponds to a contact with $x_k=0$. Simultaneous returns thus imply a return to the origin in $\ndZ^m$. 
	Concerning the bijection stated at the end, we observe that $k$ steps $\myvec{-1\\0\\0}$ 
	imply that $n-k$ steps are of type $\myvec{-1\\0\\-1}$. Thus, we require also 
	$k$ steps $\myvec{0\\-1\\-1}$  and $n-k$ steps $\myvec{0\\-1\\0}$, leading to a suitable bijection.
\end{proof}

\subsection{Sampling without replacement urn}
\label{sec:urn}
The sampling without replacement urn is a basic and fundamental urn model. One draws balls at random one after the other, observes the color of the drawn ball 
and removes it. One may study different questions regarding this urn, we refer the interested reader to~\cite{FlaDuPuy2006,HKP2007} The ball transition matrix of the sampling without replacement urn with three colors is given by
\[
\left(
\begin{matrix}
	-1 & 0 & 0\\
	0 & -1 & 0\\
	0 & 0 & -1
\end{matrix}
\right)
\]
We note in passing that, in the general case, the replacement matrix is the negative identity matrix. The sample paths of the urn model take place in $\ndN_0^3$. We provide the following connection between lattice paths, the diagonal, and the sampling urn. 

\begin{proposition}
	The random variable $N_n$, counting the number of returns to the diagonal $x=y=z$ in the cube counts the number of times there are equally many balls in the sampling urn process, starting with $n=n_1=n_2=n_3$ balls of each type.    
\end{proposition}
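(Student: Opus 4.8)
The plan is to realise a trajectory of the sampling-without-replacement urn as a lattice path in $\ndN_0^3$ and then read off the diagonal returns directly. First I would encode an urn trajectory started from $(n_1,n_2,n_3)=(n,n,n)$ by recording, after each draw, the vector $(x,y,z)$ of remaining ball counts. Drawing a ball of colour $i$ decreases the $i$th coordinate by one, so the recorded sequence is exactly a lattice path from $(n,n,n)$ to $(0,0,0)$ with the three unit steps $-\vec e_1,-\vec e_2,-\vec e_3$, automatically staying inside $\ndN_0^3$ because the counts never become negative. Conversely, any such path prescribes step by step the colour of the next ball to be removed, hence a valid drawing order; these two assignments are mutually inverse, yielding a bijection between drawing orders and lattice paths.

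Next I would observe that this bijection is measure-preserving for the natural uniform models: a uniformly random drawing order of the $3n$ balls (equivalently, a uniformly random arrangement of the multiset with $n$ balls of each colour) corresponds to a path chosen uniformly among the $\binom{3n}{n,n,n}=a_n$ lattice paths, which is precisely the model defining $N_n$ in Subsection~\ref{sec:randomwalkinthecube}. Under the identification, the urn contains equally many balls of the three colours at a given moment if and only if the associated path sits at a point with $x=y=z$, i.e.\ on the space diagonal.

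Combining the two observations, the number of moments (after the start) at which the urn has equal counts of all three colours equals the number of returns of the corresponding path to the diagonal, for the convention $a_n=\sum_{j\ge1}a_{n,j}$ used above (the terminal visit at the origin counts as a return, the initial one does not). Hence $N_n$ admits the claimed interpretation. The hard part is essentially only bookkeeping: matching the two uniform measures and being careful about whether the initial and terminal visits to the diagonal are counted; there is no genuine analytic difficulty.
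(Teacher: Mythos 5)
Your argument is correct and matches the paper's in substance: both reduce the proposition to the observation that the sampling-without-replacement process induces the uniform distribution on lattice paths from $(n,n,n)$ to the origin, so that the urn's ``all counts equal'' events correspond exactly to diagonal visits. The only cosmetic difference is how the uniformity is cashed out: the paper multiplies the transition probabilities $\frac{n_i}{n_1+n_2+n_3}$ step by step along a sample path $\sigma$ and observes that the product is always $\binom{n_1+n_2+n_3}{n_1,n_2,n_3}^{-1}$, whereas you invoke the standard exchangeability fact that drawing without replacement is the same as fixing a uniformly random arrangement of the $3n$-card multiset and read off the lattice path from that arrangement. Your route makes the bijection between drawing orders and paths explicit, which is arguably cleaner, but it does lean on the exchangeability claim without verifying it; the paper's direct telescoping computation is the self-contained version of exactly that fact. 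Either way the proof is complete, and your remark about matching the boundary convention (terminal but not initial visit counted) is a correct bookkeeping point consistent with $a_n=\sum_{j\ge1}a_{n,j}$.
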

Thus, our previous results provide a limit law for the number of equalities for the sampling without replacement urn. 
\begin{proof}
	We look at the sample paths of the Sampling without replacement urn, starting with $n_k$ balls of each type, $1\le k\le 3$, and the return to the diagonal $d\colon x=y=z$. By definition of the urn model, the transfer probabilities are given by 
	\begin{equation}
		\begin{split}
			\label{eqn:transfer}
			\Prb{(n_1,n_2,n_3)\to(n_1-1,n_2,n_3)}&=\frac{n_1}{n_1+n_2+n_3},\\
			\Prb{(n_1,n_2,n_3)\to(n_1,n_2-1,n_3)}&=\frac{n_2}{n_1+n_2+n_3},\\
			\Prb{(n_1,n_2,n_3)\to(n_1,n_2,n_3-1)}&=\frac{n_3}{n_1+n_2+n_3},
		\end{split}
	\end{equation}
	for $n_1,n_2,n_3\ge 0$ with at least one nonzero value.
	Thus, we easily observe that for every sample path $\sigma\in\ndN_0^3$ equation~\ref{eqn:transfer} implies that
	\[
	\binom{n_1+n_2+n_3}{n_1,n_2,n_3}\cdot \mathbb{P}_{\mathcal{S}}\{\sigma\}=1,
	\]
	where $\mathbb{P}_{\mathcal{S}}$ denotes the probability measure induced by the process. Thus, the draw after draw evolution of the urn can be modeled by the uniform distribution in the cube. 
\end{proof}

\subsection{Card Guessing games}
\label{sec:cards}
Card guessing games are of great interest and have been considered in the literature in many articles, starting with the works of Diaconis~\cite{Diaconis1978} 
and Diaconis and Graham~\cite{DiaconisGraham1981}; we refer the reader to~\cite{PK2023} for more pointers to the literature, in particular for
applications to real world problems. The card guessing game of our main focus can be described as follows. A deck of a total of $N$ cards is shuffled, and then the guesser in provided with the total number cards $N$, as well as the individual numbers of say hearts, diamonds, clubs and spades. After each guess, the person guessing the cards is shown the drawn card, which is then removed from the deck. This process is continued until no more cards are left. Assuming the guesser tries to maximize the number of correct guesses, one is interested in the total number of correct guesses. Of course, this card guessing procedure can be considered with an arbitrary number $m\ge 2$ of different types of cards. The simplest case of $m=2$ different types of cards, say, colors red (hearts and diamonds) and black (clubs or spades), is now very well understood, leading to connections to urn models, Dyck paths, hitting times in Brownian bridges~\cite{DiaconisGraham1981}, as well as new phenomena in Analytic Combinatorics; see~\cite{PK2023} for details. 

\smallskip

In the general setting of $m$ different types, a main interest has been in deriving asymptotics for the expectation of the total number of correct guesses~\cite{DiaconisGraham1981,HeOttolini2021,OttoliniSteiner2022}. Here, two different settings have been treated in the literature so far. In the first setting, one starts with a fixed number of cards $m$ of different types and equally many cards $n=n_k$ of each type $k$, $1\le k\le m$, as $n$ tends to infinity. Second, one studies the case of $m\to\infty$ and the individual numbers of cards $n_k\in\ndN$ all equal and fixed, or bounded. It highly desirable to shed more light on the fundamental result of Diaconis and Graham for the expected value in the first setting, mentioned before.
\begin{theorem}[Diaconis-Graham~\cite{DiaconisGraham1981}]
	\label{the:DG}
	The expected value $\Exb{C_{n,\dots,n}}$ of the number of correct guesses satisfies for fixed $m\in\ndN$, with $m\ge 2$ fixed, and $n\to\infty$ the expansion
	\[
	\Exb{C_{n,\dots,n}}=n + \frac{\pi}2 \cdot M \cdot \sqrt{n} + o(\sqrt{n}),
	\]
	where $M=\Exb{\max\{N_1,\dots,N_m\}}$ and the random variables $N_k$ are iid standard normal distributed.
\end{theorem}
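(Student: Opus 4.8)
The plan is to reduce $\Exb{C_{n,\dots,n}}$ to a single sum over the draws, rewrite each term through a multicolour bridge decomposition, and then evaluate the sum by a functional central limit theorem together with a dominated-convergence argument, in the spirit of the $m=2$ analysis recalled in~\cite{DiaconisGraham1981,PK2023}.

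First I would record the optimal strategy and the resulting formula. Under full feedback the expected number of correct guesses is maximised by always naming a colour with the largest number of remaining cards, by a standard interchange argument. Writing $mn$ for the deck size and $R^{(t)}_1,\dots,R^{(t)}_m$ for the numbers of cards of each colour still present after $t$ draws (so $\sum_k R^{(t)}_k=mn-t$), the tower property gives
\[
\Exb{C_{n,\dots,n}}=\sum_{t=0}^{mn-1}\Exb{\frac{\max_{1\le k\le m}R^{(t)}_k}{mn-t}}.
\]
Introduce the centred partial sums $S^{(k)}_t=\sum_{i=1}^t\big(\one[\text{$i$-th card has colour $k$}]-\tfrac1m\big)$; then $\sum_k S^{(k)}_t=0$, $S^{(k)}_{mn}=0$, and $R^{(t)}_k=\tfrac{mn-t}{m}-S^{(k)}_t$, so $\max_k R^{(t)}_k=\tfrac{mn-t}{m}-\min_k S^{(k)}_t$. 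The deterministic part sums to exactly $n$, leaving
\[
\Exb{C_{n,\dots,n}}-n=\sum_{t=0}^{mn-1}\frac{\Exb{-\min_k S^{(k)}_t}}{mn-t}.
\]

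The core step is the asymptotics of $\Exb{-\min_k S^{(k)}_t}$ for $t=\lfloor s\,mn\rfloor$ with $s\in(0,1)$ fixed. Since the counts of each colour among the first $t$ draws are hypergeometric, one has $\mathrm{Var}(S^{(k)}_t)=\tfrac{(m-1)t(mn-t)}{m^2(mn-1)}$ and $\mathrm{Cov}(S^{(k)}_t,S^{(\ell)}_t)=-\tfrac1{m-1}\mathrm{Var}(S^{(k)}_t)$ for $k\ne\ell$. By the multicolour Donsker theorem for sampling without replacement, $\big(n^{-1/2}S^{(k)}_{\lfloor s\,mn\rfloor}\big)_{k=1}^m$ converges in distribution to $\big(\sqrt{s(1-s)}\,G_k\big)_{k=1}^m$, where $(G_1,\dots,G_m)$ is the centred Gaussian vector $G_k=N_k-\bar N$ with $N_1,\dots,N_m$ iid standard normal and $\bar N=\tfrac1m\sum_\ell N_\ell$ (one checks $\mathrm{Var}(G_k)=\tfrac{m-1}{m}$ and $\mathrm{Cov}(G_k,G_\ell)=-\tfrac1m$). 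The uniform bound $\Exb{(S^{(k)}_t)^2}\le C(m)\,t(mn-t)/(mn)$ yields uniform integrability, so
\[
\frac1{\sqrt n}\Exb{-\min_k S^{(k)}_{\lfloor s\,mn\rfloor}}\longrightarrow \sqrt{s(1-s)}\;\Exb{\max_k(\bar N-N_k)}=\sqrt{s(1-s)}\;M,
\]
the last equality because $\max_k(\bar N-N_k)$ and $\max_k N_k-\bar N$ are equal in distribution by symmetry of the normal and $\Exb{\bar N}=0$.

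Finally I would evaluate the sum. Putting $s=t/(mn)$, so $mn-t=mn(1-s)$,
\[
\frac1{\sqrt n}\big(\Exb{C_{n,\dots,n}}-n\big)=\frac1{mn}\sum_{t=0}^{mn-1}\frac{1}{1-s}\cdot\frac{\Exb{-\min_k S^{(k)}_t}}{\sqrt n},
\]
a Riemann sum converging to $\int_0^1\frac{\sqrt{s(1-s)}}{1-s}\,M\,\mathrm{d}s=M\int_0^1\sqrt{\tfrac{s}{1-s}}\,\mathrm{d}s=\tfrac\pi2 M$ (substitute $s=\sin^2\theta$). The main obstacle, and the only genuinely delicate point, is justifying the interchange of limit and summation near the endpoints $s=0$ and $s=1$, where the Gaussian scaling degenerates and, for $s$ near $1$, the denominator $mn-t$ is small. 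Here I would use the crude estimate $\Exb{|\min_k S^{(k)}_t|}\le\sum_k\Exb{|S^{(k)}_t|}\le C'(m)\sqrt{t(mn-t)/(mn)}$ to bound the partial sums over $\{t\le\delta mn\}$ and $\{t\ge(1-\delta)mn\}$ by $O(\delta^{1/2}\sqrt n)$ uniformly in $n$; these become negligible after dividing by $\sqrt n$ and letting $\delta\downarrow0$, while on each compact middle range $[\delta,1-\delta]$ the functional CLT plus uniform integrability gives the Riemann-sum convergence. Letting $\delta\downarrow0$ then yields $\Exb{C_{n,\dots,n}}=n+\tfrac\pi2 M\sqrt n+o(\sqrt n)$. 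Alternatively, for small $m$ one can carry this out by singularity analysis of the relevant Hadamard products as in~\cite{Fill2005,PK2023}, but the probabilistic route above is cleaner for general $m$.
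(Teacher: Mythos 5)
The paper states Theorem~\ref{the:DG} purely as a citation to Diaconis and Graham~\cite{DiaconisGraham1981}; no proof is given in the text, so there is nothing internal to compare your argument against. The result is used as background for the card-guessing discussion in Subsection~\ref{sec:cards}, where the paper's own contribution is instead the Gamma limit law for the ``pure luck'' guesses $C_{n,n,n}^{(3,3)}$ via the connection to returns to the diagonal.

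Judged on its own terms, your reconstruction is sound and essentially recovers the classical probabilistic derivation. The reduction $\Exb{C_{n,\dots,n}}-n=\sum_{t=0}^{mn-1}(mn-t)^{-1}\Exb{-\min_k S^{(k)}_t}$ is exact, the hypergeometric variance/covariance computation is correct, the identification of the Gaussian limit $G_k=N_k-\bar N$ (with $\mathrm{Var}(G_k)=\tfrac{m-1}{m}$, $\mathrm{Cov}(G_k,G_\ell)=-\tfrac1m$) matches the scaled covariance of the $S^{(k)}$, and the symmetry step $\Exb{\max_k(\bar N-N_k)}=\Exb{\max_k N_k}-\Exb{\bar N}=M$ is valid since $(N_k)\eqdist(-N_k)$. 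The integral $\int_0^1\sqrt{s/(1-s)}\,\mathrm{d}s=\pi/2$ is correct, and your tail estimate $\Exb{|\min_k S^{(k)}_t|}\le C\sqrt{t(mn-t)/(mn)}$ does give $O(\sqrt{\delta}\,\sqrt n)$ for both $\{t\le\delta mn\}$ and $\{t\ge(1-\delta)mn\}$, which is what is needed to interchange limit and sum near the singular endpoint $s=1$. Two small points you gloss over but which are genuinely needed: (i) the optimality of the greedy rule (always guess a most numerous remaining colour) is itself a lemma, usually proved by an exchange/coupling argument, and should not just be asserted; (ii) uniform integrability of $n^{-1/2}(-\min_k S^{(k)}_{\lfloor smn\rfloor})$ should be spelled out as following from the uniform $L^2$ bound $\Exb{(\min_k S^{(k)}_t)^2}\le\sum_k\Exb{(S^{(k)}_t)^2}=O(n)$, which gives uniform square-integrability of the rescaled minimum and hence convergence of expectations together with the CLT. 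With those two points made explicit, your argument is a complete proof, and it is the standard route; the Hadamard-product/singularity-analysis alternative you mention at the end is the approach favoured elsewhere in this paper for related quantities, but is limited to small $m$ in practice.
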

Except for the case $m=2$, it is an open problem to derive the limit law of $C_{n,\dots,n}$, or even raw moments beyond the expectation. We look into the case of $m=3$ and use a decomposition of $C_{n,n,n}$ into different random variables. Such decompositions have turned out to be very fruitful in the two-color case. We use a link between or previous results for the returns to the diagonal in the cube and the card guessing game to obtain more insight on $C_{n,n,n}$.
The idea is to decompose the correct guesses according to two aspects: the number of types of cards being maximal in the deck when a correct guess occurs and the number of different types still present in the deck. 
\[
C_{n,n,n}=\sum_{k=1}^3\sum_{\ell=1}^{k}C_{n,n,n}^{(k,\ell)},
\]
where $C_{n,n,n}^{(k,\ell)}$ denotes the number of correct guesses with $\ell$ cards being maximal in the deck, while $k$ different types are still present.
We refer the reader to~\cite{KuPa2025Three} for more details of this decomposition, as well as to~\cite{PK2023} for a similar decomposition for two types of cards. Several simplifications occur~\cite{KuPa2025Three}, and the main interest is into the random variable 
$C_{n,n,n}^{(3,3)}$, as well as $C_{n,n,n}^{(3,2)}$ and $C_{n,n,n}^{(2,2)}$. Our results will allow us to give a complete description of $C_{n,n,n}^{(3,3)}$.
\begin{proposition}[~\cite{KuPa2025Three}]
	The number of pure luck guesses $C_{n,n,n}^{(3,3)}$ in a card guessing game with $n$ cards of each of the three types is related to the number $N_n$ of returns to the diagonal in a simple random walk
	in the cube:
	\begin{equation*}
		\label{eqn:CoinToss1}
		C_{n,n,n}^{(3,3)} = \bin \big(N_n,\textstyle{\frac{1}{3}}\big).
	\end{equation*}   
\end{proposition}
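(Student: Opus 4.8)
The plan is to reduce the claimed identity $C_{n,n,n}^{(3,3)} \eqdist \bin(N_n, \tfrac13)$ to a conditional independence statement: conditionally on the trajectory of the urn through its returns to the diagonal, each ``pure luck'' guessing opportunity is a fair guess among the three equinumerous piles, and these opportunities are in bijection with the returns counted by $N_n$. First I would recall from the decomposition $C_{n,n,n}=\sum_{k=1}^3\sum_{\ell=1}^{k} C_{n,n,n}^{(k,\ell)}$ that $C_{n,n,n}^{(3,3)}$ counts exactly those correct guesses occurring at a moment when all three card types are still present \emph{and} all three counts are equal, i.e. when the deck state lies on the diagonal $x=y=z$ with $x\ge 1$. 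Under the bijection of Theorem~\ref{the:Kreweras} (and the uniform-measure identification in the proof of the sampling-urn proposition), the draw-by-draw evolution of the deck is the uniform random walk in the cube from $(n,n,n)$ to the origin, and each visit to the diagonal $x=y=z$ with $x\ge 1$ is precisely what $N_n$ counts. Hence the number of pure-luck \emph{opportunities} equals $N_n$.

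Next I would argue that at each such opportunity the optimal guesser has no information distinguishing the three types: when the current state is $(j,j,j)$ with $j\ge 1$, the remaining deck has $j$ cards of each colour, so conditionally on the past the next card is uniform on the three colours, and any guessing strategy — optimal or not — is correct with probability exactly $\tfrac13$, independently of everything revealed so far. This is the step where one must be slightly careful: one needs that conditioning on the \emph{entire} urn path (equivalently, on $N_n$ and the interarrival structure of diagonal visits) leaves the colour of the card drawn immediately after each diagonal visit conditionally uniform and mutually independent across the different diagonal visits. This follows because, given the sequence of states visited, the labels attached to the ``which specific colour'' choices at symmetric states are exchangeable and the multinomial weight $\binom{n_1+n_2+n_3}{n_1,n_2,n_3}^{-1}$ factorises appropriately; I expect this bookkeeping — making the conditional-independence precise rather than just ``morally clear'' — to be the main obstacle, and it is presumably handled in detail in~\cite{KuPa2025Three}.

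Granting that, the conclusion is immediate: conditionally on $N_n = k$, the quantity $C_{n,n,n}^{(3,3)}$ is a sum of $k$ independent $\mathrm{Bernoulli}(\tfrac13)$ random variables, i.e.\ $(C_{n,n,n}^{(3,3)}\mid N_n=k)\eqdist \bin(k,\tfrac13)$, which is exactly the assertion $C_{n,n,n}^{(3,3)}=\bin(N_n,\tfrac13)$ as a mixture. I would close by remarking that combined with Theorem~\ref{te:main1} (with $\alpha=0$, since the composition scheme for the cube has $\Exb{q^{N_n}}$ driven by $A(z,q)=1/(1-qA_1(z))$, giving component-count weights $v_k=1$) this yields $C_{n,n,n}^{(3,3)}\,\pi_n \convd \tfrac13 Z$ for $Z\sim\mathrm{Gamma}(1,1)$ and the appropriate slowly varying scale $\pi_n = \Prb{X>n}\asymp 1/\log n$, so that $C_{n,n,n}^{(3,3)}$ grows like $\tfrac{1}{3}\cdot\tfrac{\sqrt3}{2\pi}\log n$ in expectation — consistent with the known order of the pure-luck contribution.
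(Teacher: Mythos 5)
The paper does not actually prove this proposition; it is cited from reference~\cite{KuPa2025Three} and used as a black box, so there is no in-paper proof to compare against. That said, your outline identifies the right two ingredients: (i) the pure-luck guessing opportunities are in bijection with the diagonal visits at states $(j,j,j)$ with $j\ge 1$, and this count equals $N_n$ (start $(n,n,n)$ counted, terminal $(0,0,0)$ not, exactly compensating for the fact that $N_n$ counts the returns rather than the starting touch); and (ii) by the $S_3$-symmetry of the uniform measure on cube paths, the colour drawn at each diagonal visit is uniform over the three types, independently of what happens on the other arches.

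However, there is a slip in how you formulate step (ii). You write that one should condition ``on the \emph{entire} urn path (equivalently, on $N_n$ and the interarrival structure of diagonal visits)'' and then conclude the colour at each diagonal visit is conditionally uniform. But conditioning on the entire path determines the drawn colours deterministically, so this cannot be the right conditioning, and the parenthetical ``equivalently'' is false: the full path contains strictly more information than $N_n$ together with the arch sizes. The correct statement is that one conditions on the arch-size sequence only. Given the breakpoints $(n,n,n),(j_1,j_1,j_1),\dots,(0,0,0)$, the arches are independent and each is uniform on the set of diagonal-avoiding excursions of the given length; invariance of this uniform measure under coordinate permutations makes the first step of each arch uniform over the three colours, independently across arches, which yields the $\bin(N_n,1/3)$ mixture. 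Your appeal to ``the multinomial weight factorises appropriately'' is the right instinct, but the sentence about conditioning on the entire path should be dropped.

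Finally, the closing remark contains a parameter error: the cube scheme has $V(z)=1/(1-z)$, so $v_k\equiv 1$, and since $W(\rho_w)=A_1(1/27)=1$ the condition $v_n\sim L_v(n)n^{\alpha-1}W(\rho_w)^{-n}$ forces $\alpha=1$, not $\alpha=0$. Consequently Theorem~\ref{te:main1} gives $Z\sim\mathrm{Gamma}(2,1)$ (in agreement with the paper's Corollary~\ref{Cor:Guess}), not $\mathrm{Gamma}(1,1)$, and the scale is $\Prb{X>n}\sim \tfrac{2\pi}{\sqrt3\,\log n}$ with $L_w(n)\sim\tfrac{2\pi}{\sqrt3\,\log^2 n}$ and $\gamma=1$.
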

\begin{corollary}
	\label{Cor:Guess}
	The random variable $C_{n,n,n}^{(3,3)}$ has a Gamma limit law:
	to an Erlang distribution with density function $f(x)=xe^{-x}$, $x\ge 0$:
	\[
	\sqrt{3} C_{n,n,n}^{(3,3)}/(\frac{1}{2\pi}\log(n))\to\text{Gamma}(2,1).
	\]
\end{corollary}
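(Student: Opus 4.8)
The plan is to use the identity $C_{n,n,n}^{(3,3)} = \bin(N_n,\tfrac13)$ from the preceding proposition together with the asymptotics of $N_n$ furnished by our main results, and then to remove the superfluous binomial randomness by a law of large numbers. First I would identify the relevant composition scheme: by Lemma~\ref{the:DiagToDiag} the returns to the diagonal are governed by the scheme with $W = A_1$ and $V(z) = z/(1-z)$, so $v_k = 1$ for all $k\ge1$. By Lemma~\ref{lem:SingularPA} we have $\rho_w = 1/27$ and $W(\rho_w) = \lim_{z\to\rho}A_1(z) = 1$, and by Corollary~\ref{coroll:noVisits} the component weights satisfy $w_n = a_{n,1} \sim \frac{2\pi}{\sqrt3}\rho_w^{-n}n^{-1}\log^{-2}n$. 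Hence~\eqref{eq:condw} holds with $L_w(n) = \frac{2\pi}{\sqrt3}\log^{-2}n$ and~\eqref{eq:condv} holds with $\alpha = 1$ and $L_v\equiv1$; moreover this is exactly the setting of~\eqref{eq:regc2} with $\gamma = 1$ and constant $L\equiv\frac{2\pi}{\sqrt3}$, so Proposition~\ref{pro:cong}~(2) yields $\Prb{X>n}\sim\frac{2\pi}{\sqrt3\log n}$.

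Next I would apply Theorem~\ref{te:main1} with $\alpha = 1$ to obtain $N_n\Prb{X>n}\convd Z$ for a $\mathrm{Gamma}(2,1)$-distributed $Z$, equivalently $\frac{2\pi}{\sqrt3}\cdot\frac{N_n}{\log n}\convd Z$; since $Z>0$ almost surely while $\Prb{X>n}\to0$, this also forces $N_n\convp\infty$. By the preceding proposition $C_{n,n,n}^{(3,3)}$ equals in distribution $\sum_{i=1}^{N_n}B_i$ with $(B_i)_{i\ge1}$ i.i.d.\ Bernoulli$(\tfrac13)$ variables independent of $N_n$. A conditional Chebyshev estimate, using $\Exb{C_{n,n,n}^{(3,3)}\mid N_n}=N_n/3$ and $\Vab{C_{n,n,n}^{(3,3)}\mid N_n}=\tfrac29 N_n$ together with $N_n\convp\infty$, then gives $C_{n,n,n}^{(3,3)}/N_n\convp\tfrac13$.

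To finish I would write
\[
\frac{\sqrt3\,C_{n,n,n}^{(3,3)}}{\tfrac{1}{2\pi}\log n}=\Bigl(3\,\frac{C_{n,n,n}^{(3,3)}}{N_n}\Bigr)\cdot\frac{2\pi}{\sqrt3}\cdot\frac{N_n}{\log n},
\]
observe that the first factor converges to $1$ in probability while the product of the remaining factors converges to $Z$ in distribution, and invoke Slutsky's theorem to conclude $\sqrt3\,C_{n,n,n}^{(3,3)}\big/\bigl(\tfrac{1}{2\pi}\log n\bigr)\convd\mathrm{Gamma}(2,1)$. There is no serious obstacle here: the one point requiring care is the mixing step, namely that a binomial whose number of trials diverges only at a slowly varying rate still obeys the law of large numbers, and that Slutsky's lemma is applicable because one factor tends in probability to a \emph{constant} whereas the companion factor tends in distribution to a nondegenerate limit. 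Everything else is routine bookkeeping with the singular expansions already established in Lemma~\ref{lem:SingularPA} and Corollary~\ref{coroll:noVisits}.
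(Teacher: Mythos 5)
Your proof is correct, but it proceeds by a genuinely different route than the paper's. The paper's argument is a moment method: starting from the factorial-moment identity $\Exb{\fallfak{R}{s}}=\fallfak{n}{s}p^s$ for a binomial $R$, it computes $\Exb{\fallfak{(C_{n,n,n}^{(3,3)})}{s}\mid N_n}=\fallfak{N_n}{s}/3^s$, passes to raw moments via Stirling numbers and the moment asymptotics already supplied by Theorem~\ref{te:main1}, and then upgrades moment convergence to weak convergence by verifying Carleman's criterion and invoking the Fr\'echet--Shohat theorem. You instead take the distributional limit $N_n\Prb{X>n}\convd\mathrm{Gamma}(2,1)$ of Theorem~\ref{te:main1} at face value, deflate the binomial randomness by a conditional Chebyshev estimate (using $N_n\convp\infty$ and boundedness of the indicator to pass the limit inside the expectation) to get $C_{n,n,n}^{(3,3)}/N_n\convp\tfrac13$, and close with Slutsky. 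Your approach is shorter and more elementary in that it sidesteps Carleman and Fr\'echet--Shohat entirely, while the paper's approach yields convergence of all moments as a byproduct, which is in the same spirit as the moment convergence stated in Theorem~\ref{te:main1}. One minor presentational note: your identification of the scheme ($w_n=a_{n,1}$, $v_k\equiv1$, $\alpha=1$, $L_v\equiv1$, $\gamma=1$, $\Prb{X>n}\sim\tfrac{2\pi}{\sqrt3\log n}$) is a careful piece of bookkeeping that the paper leaves largely implicit; making it explicit as you do is a genuine improvement in readability.
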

\begin{proof}[Proof of Corollary~\ref{Cor:Guess}]
	We use a well-known classical property of the binomial distribution $R= \bin(n,p)$:
	\[
	\Exb{\fallfak{R}{s}}=\fallfak{n}{s}p^s, \quad s\ge 0.
	\]
	Thus, we obtain for the conditional expectation 
	\[
	\Exb{\fallfak{\big(C_{n,n,n}^{(3,3)}\big)}{s}\mid N_n} = \fallfak{N_n}{s}\cdot \frac{1}{3^s}.
	\]
	Consequently, We obtain the factorial moments of $C_{n,n,n}^{(3,3)}$ by the tower rule of total expectation.
	We get the desired expansion for the raw moments
	\begin{equation*}
		\Exb{(N_n)^s} = \sum_{k=0}^s \Stir{s}{k} \Exb{\fallfak{(N_n)}k} \sim \frac{1}{3^s}\cdot (s+1)!\Big(\frac{\sqrt{3}}{2\pi}\Big)^{s}\log^{s}(n).
	\end{equation*}
	Carleman's criterion~\cite[pp.~189--220]{Carleman23} for the Stieltjes moment problem, support $[0,\infty)$, states
	that if 
	\begin{equation}
		\sum_{s=0}^{\infty}\mu_s^{-1/(2s)}=+\infty,
		\label{eq:Carleman}
	\end{equation}
	then the moment sequence $(\mu_s)_{s\ge 1}$ determines a unique distribution. Furthermore, this implies that if there exists a constant $C>0$ such that
	\[
	\mu_s\le C^s(2s)!\quad\text{for } s\in\ndN,
	\]
	then Carleman's criterion is satisfied. This is obviously true for the moments of the Erlang distribution,
	such that the divergence in Carleman's criterion is satisfied. Thus, by the Fr\'echet--Shohat theorem~\cite{FrSh1931}, 
	we obtain the weak convergence of the scaled random variable to a Gamma-distributed random variable.
\end{proof}

\subsection{Outlook}
\label{sec:outlook}
At the end of this work we outline how to apply our main results to the analysis of space contacts for other families of lattice paths in the cube, starting at the upper corner $(n,n,n)$ and ending at the origin. In order to automate the derivation of the overall asymptotics of the lattice paths
from $(n,n,n)$ to the origin with steps in $\mathcal{S}$, we reverse the direction of the steps and study the generating function
\[
F(t;x,y,z)=\sum_{n\ge 0}\big(P(x,y,z)\big)^n t^n=\frac1{1-tP(x,y,z)},
\]
where $P(x,y,z)=P_{-\mathcal{S}}(x,y,z)$ denotes the step polynomial associated to the step set $-\mathcal{S}$.
The number of walks from the upper corner $(n,n,n)$ to the origin is then encoded by the diagonal: $[x^ny^nz^n]F(1;x,y,z)$. The derivation of such diagonals and their asymptotics can be done using Analytic Combinatorics in several variables~\cite{MelczerSalvy2021,ACSV}, see also~\cite{Bostan_2015}.
Then, assuming that the walks are directed, we can simply use the arch decomposition, 
leading to a combinatorial sequence construction and $V(z)=1/(1-z)$. Then, the asymptotics of the diagonal of $F(1;x,y,z)$ are translated to the asymptotics of $W(z)$, which allows to check our standing assumptions and invoke our main results. 

\begin{table}[!htb]
	\centering
	\begin{tabular}{|c|c|}
		\hline
		Step set $-\mathcal{S}$ & $F(1;x,y,z)$ \\
		\hline
		$\{\vec{e}_1,\vec{e}_2,\vec{e}_3\}$ & $1/(1-x-y-z)$\\
		\hline
		$\{\vec{e}_1,\vec{e}_2,\vec{e}_3,\vec{e}_1+\vec{e}_2+\vec{e}_3\}$ & $1/(1-x-y-z-xyz)$\\
		\hline
		$\{\vec{e}_1+\vec{e}_2,\vec{e}_1+\vec{e}_3,\vec{e}_2+\vec{e}_3,\vec{e}_1+\vec{e}_2+\vec{e}_3\}$ & $1/(1-xy-xz-xy-xyz)$\\
		\hline
		$\{\vec{e}_1,\vec{e}_2,\vec{e}_1+\vec{e}_3,\vec{e}_2+\vec{e}_3\}$ & $1/(1-(x+y)(z+1))$\\
		\hline
	\end{tabular}
	\caption{Reversed stepset $-\mathcal{S}$ and overall generating function $F(1;x,y,z)$ for walks discussed in this work}
	\label{tab:placeholder}
\end{table}
Finally, we note that one can readily study different step sets, colored walks, or even walks with colored steps.

\section*{Acknowledgments}
The authors thank Alois Panholzer for several discussions about combinatorial applications of our results and his interest in this work. The second author also thanks Cyril Banderier for interesting discussions concerning the new component weights treated in this article and their connection to families of lattice paths. 

\bibliographystyle{abbrv}
\bibliography{gibbs}

\end{document}